\documentclass[a4paper,12pt]{amsart}
\usepackage{amsmath}
\usepackage{amsmath}
\usepackage{amssymb}
\usepackage{amscd}
\usepackage{mathrsfs}
\usepackage[all]{xy}
\usepackage[dvipdfm]{graphicx}
\def\mathcal{\mathscr}
\everymath{\displaystyle}
\setlength{\topmargin}{0in}
\setlength{\oddsidemargin}{0in}
\setlength{\evensidemargin}{0in}
\setlength{\textwidth}{6.3in}
\setlength{\textheight}{9.15in}
\setlength{\footskip}{0.25in}
\setlength{\parskip}{3mm}

\newtheorem{thm}{Theorem}[section]
\newtheorem{lem}[thm]{Lemma}
\newtheorem{cor}[thm]{Corollary}
\newtheorem{prop}[thm]{Proposition}

\theoremstyle{definition}

\newtheorem{rem}[thm]{Remark}

\newtheorem{defn}[thm]{Definition}

\newcommand{\mca}[1]{{\mathcal{#1}}}

\def\Z{{\mathbb Z}}

\def\R{{\mathbb R}}
\def\con{\text{\rm con}}
\def\id{\text{\rm id}}
\def\Image{\text{\rm Im}}
\def\ind{\text{\rm ind}}

\def\len{\text{\rm length}}

\def\conv{\text{\rm conv}}
\def\diam{\text{\rm diam}}
\def\dim{\text{\rm dim}}
\def\dist{\text{\rm dist}}
\def\ep{\varepsilon} 
\def\ev{\text{\rm ev}} 
\def\interior{\text{\rm int}}
\def\plim{\varprojlim}
\def\supp{\text{\rm supp}}
\def\Spec{\text{\rm Spec}}
\def\Star{\text{\rm Star}}
\def\wid{\text{\rm width}}

\begin{document}
\pagestyle{plain}
\thispagestyle{plain}

\title[Periodic billiard trajectories and Morse theory on loop spaces]
{Periodic billiard trajectories and Morse theory on loop spaces}

\author[Kei Irie]{Kei Irie}
\address{Research Institute for Mathematical Sciences, Kyoto University,
Kyoto 606-8502, Japan}
\email{iriek@kurims.kyoto-u.ac.jp}

\subjclass[2010]{37J45, 70H12, 52A20} 
\date{\today}

\begin{abstract}
We study periodic billiard trajectories on a compact Riemannian manifold with boundary, by applying Morse theory to 
Lagrangian action functionals on the loop space of the manifold. 
Based on the approximation method due to Benci-Giannoni, we prove that nonvanishing of relative homology of a certain pair of loop spaces 
implies the existence of a periodic billiard trajectory. 
We also prove a parallel result for path spaces. 
We apply those results to show the existence of short billiard trajectories and short geodesic loops. 
We also recover two known results on the length of a shortest periodic billiard trajectory on a convex body: Ghomi's inequality, and 
Brunn-Minkowski type inequality due to Artstein-Ostrover. 
\end{abstract}

\maketitle

\section{Introduction and results} 

In this section, we describe our main results and the plan of this paper. 

\subsection{Definitions of periodic/brake billiard trajectory}
First let us fix the definition of periodic billiard trajectory. 
We also introduce the notion of brake billiard trajectory, which is a relative version of periodic trajectory. 

Let $Q$ be a Riemannian manifold with boundary. 
We set $S^1:=\R/\Z$. 
A nonconstant continuous map $\gamma:S^1 \to Q$ is called a \textit{periodic billiard trajectory}, if there exists a finite set $\mca{B}_\gamma \subset S^1$ such that 
$\ddot{\gamma} \equiv 0$ on $S^1 \setminus \mca{B}_\gamma$, and every $t \in \mca{B}_\gamma$ satisfies the following conditions:
\begin{itemize}
\item[B-(i):] $\gamma(t) \in \partial Q$. 
\item[B-(ii):] $\dot{\gamma}^\pm (t):= \lim_{h \to 0 \pm} \dot{\gamma}(t+h)$ satisfies the following equation:
\[ 
\dot{\gamma}^+(t)  + \dot{\gamma}^-(t)  \in T_{\gamma(t)} \partial Q, \qquad
\dot{\gamma}^+(t)  - \dot{\gamma}^-(t)  \in (T_{\gamma(t)} \partial Q)^{\perp} \setminus \{0\}.
\] 
This equation is called the \textit{law of reflection}. 
\end{itemize} 

\begin{rem}
Here are some remarks on the above definition. 
\begin{itemize}
\item 
A periodic billiard trajectory $\gamma$ might be a closed geodesic on $Q$. 
 In that case, $\mca{B}_\gamma  =\emptyset$.
\item 
If $\gamma$ touches $\partial Q$ at $t \in S^1$, B-(ii) does not hold since 
$\dot{\gamma}^+(t) - \dot{\gamma}^-(t)=0$. 
Therefore, $\gamma^{-1}(\partial Q)$ might be strictly larger than $\mca{B}_\gamma$. 
\item 
The law of reflection implies that, $|\dot{\gamma}|$ is constant on $S^1 \setminus \mca{B}_\gamma$. 
Moreover, $|\dot{\gamma}| \ne 0$ since $\gamma$ is a nonconstant map.
\end{itemize} 
\end{rem}

A nonconstant continuous map $\gamma :[0,1] \to Q$ is called a \textit{brake billiard trajectory},  if it satisfies the following conditions:
\begin{itemize}
\item There exists a finite set $\mca{B}_\gamma \subset (0,1)$ such that 
$\ddot{\gamma} \equiv 0$ on $[0,1] \setminus \mca{B}_\gamma$, and every $t \in \mca{B}_\gamma$ satisfies B-(i), B-(ii). 
\item  $\gamma(0), \gamma(1) \in \partial Q$, and $\dot{\gamma}(0)$, $\dot{\gamma}(1)$ are perpendicular to $\partial Q$. 
\end{itemize} 
The name "brake" billiard trajectory comes from the notion of brake orbit in classical mechanics
(see \cite{HZ} pp.131). 
In both (periodic and brake) cases, elements of $\mca{B}_\gamma$ are called \textit{bounce times} of $\gamma$. 

For any brake billiard trajectory $\gamma:[0,1] \to Q$, we have a periodic billiard trajectory $\Gamma: S^1 \to Q$ which is defined by 
\[
\Gamma(t): = \begin{cases} \gamma(2t) &(0 \le t \le 1/2) \\ \gamma(2-2t) &(1/2 \le t \le 1). \end{cases}
\]
This is a genuine billiard trajectory, i.e. $\mca{B}_\Gamma \ne \emptyset$. 
If $\gamma$ satisfies $\mca{B}_\gamma = \emptyset$, 
$\Gamma$ is called a \textit{bouncing ball orbit}. 

\subsection{Billiard trajectory and topology of path/loop spaces} 
We state our first result Theorem \ref{mainthm}, 
which claims that 
nonvanishing of relative homology of a certain pair of loop spaces 
implies 
the existence of a periodic billiard trajectory. 
We also prove a parallel result for brake billiard trajectories. 

First we fix notations. 
A continuous map $\gamma:S^1 \to Q$ is of class $W^{1,2}$, 
if it is absolutely continuous and its first derivative is square-integrable. 
$W^{1,2}(S^1,Q)$ denotes the space of $W^{1,2}$-maps $S^1 \to Q$. 
$W^{1,2}([0,1],Q)$ is defined in the same way. 
We use the following abbreviations:
\[
\Lambda(Q):= W^{1,2}([0,1], Q), \qquad
\Omega(Q):= W^{1,2}(S^1, Q). 
\]
These spaces are equipped with the natural topologies.
For any subset $S \subset Q$, we set 
\[
\Lambda(S):= \{ \gamma \in \Lambda(Q) \mid \gamma([0,1]) \subset S\}, \qquad
\Omega(S):= \{ \gamma \in \Omega(Q) \mid \gamma(S^1) \subset S \}. 
\]
They are equipped with the induced topologies as subsets of  $\Lambda(Q)$, $\Omega(Q)$. 

We define $\mca{E}: \Lambda(Q) \to \R$ by $\mca{E}(\gamma):= \int_0^1  \frac{|\dot{\gamma}(t)|^2}{2} \, dt$. 
$\mca{E}: \Omega(Q) \to \R$ is defined in the same way. 
For any $a \in \R$, we define
\[
\Lambda^a(Q):= \{ \gamma \in \Lambda(Q) \mid \mca{E}(\gamma)<a \}, \qquad 
\Omega^a(Q):= \{ \gamma \in \Omega(Q) \mid \mca{E}(\gamma) < a\}. 
\]
When $a<b$, one has obvious inclusions 
$\Lambda^a(Q) \subset \Lambda^b(Q)$, 
$\Omega^a(Q) \subset \Omega^b(Q)$. 
Let $\delta$ be any positive number. We denote the distance on $Q$ by $\dist$, and define 
\begin{align*}
Q(\delta)&:=\{ q \in Q \mid \dist(q,\partial Q) \ge \delta \}, \\
\Lambda_\delta(Q)&:= \Lambda(Q) \setminus \Lambda(Q(\delta)) = \{ \gamma \in \Lambda(Q) \mid \dist(\gamma([0,1]), \partial Q) < \delta \}, \\
\Omega_\delta(Q)&:= \Omega(Q) \setminus \Omega(Q(\delta)) = \{ \gamma \in \Omega(Q) \mid \dist(\gamma(S^1),\partial Q) < \delta \}.
\end{align*}
When $\delta'  < \delta$, one has obvious inclusions 
$\Lambda_{\delta'}(Q) \subset \Lambda_\delta(Q)$, 
$\Omega_{\delta'}(Q) \subset \Omega_\delta(Q)$. 

\begin{thm}\label{mainthm}
Let $Q$ be a compact Riemannian manifold with boundary, $a<b$ be positive real numbers, and $j$ be a nonnegative integer. 
\begin{enumerate}
\item[(i):]
If $\varprojlim_{\delta \to 0} H_j(\Lambda^b(Q) \cup \Lambda_\delta(Q), \Lambda^a(Q) \cup \Lambda_\delta(Q)) \ne 0$, 
there exists a brake billiard trajectory $\gamma$ on $Q$, such that $\sharp \mca{B}_\gamma \le j-2$, $\len(\gamma) \in [\sqrt{2a}, \sqrt{2b}]$. 
\item[(ii):]
If $\varprojlim_{\delta \to 0} H_j(\Omega^b(Q) \cup \Omega_\delta(Q), \Omega^a(Q) \cup \Omega_\delta(Q)) \ne 0$, 
there exists a periodic billiard trajectory $\gamma$ on $Q$,  such that $\sharp \mca{B}_\gamma \le j$, $\len(\gamma) \in [\sqrt{2a}, \sqrt{2b}]$. 
\end{enumerate}
\end{thm}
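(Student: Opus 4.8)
The plan is to realize the singular, constrained variational problem for billiard trajectories as a limit of smooth, unconstrained problems — following the approximation scheme of Benci--Giannoni — and then to apply classical Morse/minimax theory on loop and path spaces. First I would embed $Q$ isometrically into a manifold $M$ without boundary (for instance a collar extension of $Q$), so that $\partial Q$ becomes a regular level set and the ``outside'' $M\setminus Q$ is available. For each small $\delta>0$ choose a smooth function $V_\delta\ge 0$ on $M$ that vanishes on $Q(\delta)$, grows monotonically across the collar $Q\setminus Q(\delta)$, and is so large on $M\setminus Q(\delta/2)$ that it acts as a barrier at scale $\delta$. Set $\mca{E}_\delta:=\mca{E}+\int_0^1 V_\delta(\gamma)\,dt$, a smooth functional on the loop space of $M$ (resp.\ on the space of free-endpoint $W^{1,2}$ paths in $M$, for the brake case), which satisfies the Palais--Smale condition and whose critical points with $\mca{E}_\delta<2b$ have image within distance $O(\delta)$ of $Q$ and obey $\nabla_{\dot\gamma}\dot\gamma=-\nabla V_\delta(\gamma)$.

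The next step is a deformation lemma identifying the relevant sublevel topology: a loop with $\mca{E}_\delta<c$ either already lies in $Q(\delta)$ with $\mca{E}<c$, or meets the collar and hence lies in $\Omega_\delta(Q)$; pushing the remaining loops down along $-\nabla\mca{E}_\delta$ shows that the sublevel pair $\{\mca{E}_\delta<b\}\supset\{\mca{E}_\delta<a\}$ receives the pair $(\Omega^b(Q)\cup\Omega_\delta(Q),\ \Omega^a(Q)\cup\Omega_\delta(Q))$ compatibly. Hence a nonzero class in $H_j(\Omega^b(Q)\cup\Omega_\delta(Q),\Omega^a(Q)\cup\Omega_\delta(Q))$ yields a nonzero relative class in the sublevel pair of $\mca{E}_\delta$, and minimax produces a critical point $\gamma_\delta$ of $\mca{E}_\delta$ with $\mca{E}_\delta(\gamma_\delta)\in[a,b]$. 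A finite-dimensional reduction to broken geodesics (Morse's classical device) then bounds the number of collar excursions of $\gamma_\delta$ by the dimension $j$ of the minimax class — the Benci--Giannoni estimate on the number of bounce points — which is the source of the bound $\sharp\mca{B}_\gamma\le j$ (the offset $j-2$ in (i) coming from the two extra ``free endpoint'' directions in the path-space reduction, or equivalently from a symmetry reduction of (ii)).

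It then remains to let $\delta\to 0$. The energy bound together with conservation of $\frac{1}{2}|\dot\gamma_\delta|^2+V_\delta(\gamma_\delta)$ gives $C^0$-compactness, and compactness locally in $W^{1,2}$ away from the excursion set; extract a limit $\gamma$, nonconstant because $\mca{E}(\gamma)\ge a>0$. As $\delta\to 0$ the collar excursions of $\gamma_\delta$ concentrate onto a finite set $\mca{B}_\gamma\subset\partial Q$, and using that each excursion conserves speed and the momentum tangent to $\partial Q$ one checks that $\gamma$ satisfies $\ddot\gamma\equiv 0$ off $\mca{B}_\gamma$ and the law of reflection B-(i), B-(ii) at each point of $\mca{B}_\gamma$ (in the brake case the two free endpoints converge to points of $\partial Q$ with velocity perpendicular to $\partial Q$). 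Thus $\gamma$ is a periodic (resp.\ brake) billiard trajectory with $\sharp\mca{B}_\gamma\le j$ (resp.\ $\le j-2$), and since critical points have constant speed off the barrier, $\len(\gamma)=\sqrt{2\mca{E}(\gamma)}\in[\sqrt{2a},\sqrt{2b}]$.

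\textbf{The main obstacle} is this last limit. One must rule out the minimax trajectories $\gamma_\delta$ degenerating: collapsing to a constant, which the lower bound $\mca{E}_\delta(\gamma_\delta)\ge a>0$ forbids (this is exactly why $\Omega_\delta(Q)$ and $\Lambda_\delta(Q)$ are adjoined to \emph{both} levels, so the minimax value cannot sink below $a$); gliding along $\partial Q$ over an arc of positive length; or shedding bounce points into the collar so that the count is not preserved in the limit. The compatibility built into $\varprojlim_{\delta\to 0}$ is what keeps the minimax values and the trajectories from escaping as $\delta\to 0$, and the delicate analytic point is verifying that the limiting contacts with $\partial Q$ are genuine reflections rather than tangencies or gliding — the technical heart of the Benci--Giannoni method, which here must be adapted to a general Riemannian manifold and to the brake (Neumann-type) boundary conditions.
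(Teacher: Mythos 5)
Your overall architecture (smooth approximating Lagrangian problem, Morse/minimax theory on the path/loop space, limit to a billiard trajectory) is indeed the paper's strategy, but two essential mechanisms in your outline are either wrong or missing, and they are exactly where the content of the theorem lies. First, the bound $\sharp\mca{B}_\gamma\le j$ does \emph{not} come from a finite-dimensional broken-geodesic reduction bounding ``collar excursions by the dimension of the minimax class.'' In the paper the approximating critical points $\gamma_\ep$ carry a Morse index bound $\ind(\gamma_\ep)\le j$ supplied by the abstract Morse theory (Proposition \ref{prop:Morsetheory}), and the bounce count is controlled by showing that the measures $2\ep h(\gamma_\ep)^{-3}\,dt$ converge weakly to a measure $\mu$ and that for each $\tau\in\supp\mu$ there is a test field $v_\ep=\psi\,\nabla h(\gamma_\ep)$ supported near $\tau$ with $d^2\mca{L}_\ep(\gamma_\ep)(v_\ep,v_\ep)\to-\infty$; disjointly supported such fields force $\sharp\supp\mu\le\liminf_\ep\ind(\gamma_\ep)\le j$ (Lemma \ref{lem:supp_mu}). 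Without some version of this Hessian blow-up argument you have no control on the number of bounces, which is the quantitative heart of the statement. Relatedly, your explanation of the offset in (i) (``two extra free-endpoint directions'') is not the right reason: the $-2$ arises because in the brake case the endpoints $0,1$ necessarily belong to $\supp\mu$ (the trajectory hits $\partial Q$ perpendicularly there) but are not counted as bounce times, so $\sharp\mca{B}_\gamma=\sharp\supp\mu-2\le j-2$.

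Second, your approximation scheme conflates two parameters that the paper keeps separate, and this matters for the limit you yourself identify as the main obstacle. The paper does not penalize the exterior of $Q$ in an ambient manifold and send the barrier scale $\delta\to0$; it works inside $\interior Q$ with a potential $\ep U_\delta$, $U_\delta=h_\delta^{-2}-(2\delta)^{-2}$, diverging at $\partial Q$, keeps $\delta=\delta_1$ \emph{fixed}, and sends only the coefficient $\ep\to0$. The inverse limit over $\delta$ in the hypothesis is used purely topologically: one picks $\delta_0$ with the map to the $\delta_0$-level nonzero and shows by excision that $H_j(\{\mca{L}_{\ep U_{\delta_1}}<b\},\{\mca{L}_{\ep U_{\delta_1}}<a\})\ne0$ for every $\ep$ (Section 4), using $\{\mca{L}_{\ep U_{\delta_1}}<c\}\subset\Lambda^c\cup\Lambda_{\delta_0}$ and $\Lambda^b\cap\Lambda_{\delta_2}\subset\{\mca{L}_{\ep U_{\delta_1}}<a\}$. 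The analytic limit $\ep\to0$ is then handled by the weak equation $\int_0^1\langle\dot\gamma_0,\nabla_t v\rangle\,dt=\int_0^1\langle\nu(\gamma_0),v\rangle\,d\mu$ (Lemma \ref{lem:weakbilliard}), from which $\ddot\gamma_0\equiv0$ off the finite set $\supp\mu$, the perpendicularity at the endpoints, and the law of reflection all follow by choosing appropriate test fields; gliding along $\partial Q$ is excluded because $\supp\mu$ is finite, which again rests on the index argument above. Your proposal acknowledges these points as ``the technical heart'' but does not supply them, so as written it is an outline of the right kind of proof rather than a proof.
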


\begin{rem}\label{rem:closed}
Let us check Theorem \ref{mainthm} when $Q$ is a closed manifold. 
In this case, there always holds $H_*(\Lambda^b(Q), \Lambda^a(Q))=0$, thus the assumption of (i) is never satisfied. 
On the other hand, (ii) claims that, if $H_*(\Omega^b(Q), \Omega^a(Q)) \ne 0$ then there exists a closed geodesic $\gamma$ on $Q$ 
such that $\len(\gamma) \in [\sqrt{2a}, \sqrt{2b}]$: this is a very well-known fact in the study of closed geodesics. 
Thus the main point of Theorem \ref{mainthm} is when $Q$ has nonempty boundary, and 
one can see it as the billiard version of the above-mentioned classical fact. 
\end{rem} 

We explain the idea of the proof. 
For simplicity, we only discuss the case (i). 
We take a "potential function" $U: \interior Q \to \R_{\ge 0}$ which diverges to $\infty$ near $\partial Q$. 
We also take $\ep>0$, and study the following equation for $\gamma: [0,1] \to \interior Q$:
\begin{equation}\label{eq:sketch}
\dot{\gamma}(0) = \dot{\gamma}(1)=0, \qquad \ddot{\gamma}(t) + \ep \nabla U(\gamma(t) )  \equiv 0. 
\end{equation}
As is well-known, solutions of this equation are critical points of the Lagrangian functional $\mca{L}_\ep$ on the path space $\Lambda(\interior Q)$, 
which is defined as
\[
\mca{L}_\ep(\gamma):= \int_0^1  \frac{|\dot{\gamma}(t)|^2}{2} - \ep U(\gamma(t)) \, dt.
\]
One can prove the existence of a solution of (\ref{eq:sketch}) by Morse theory for the functional $\mca{L}_\ep$. 
The precise statement is Proposition \ref{prop:Morsetheory}, and Section 2 is devoted to its proof. 

Suppose that we have a solution $\gamma_\ep$ of (\ref{eq:sketch}) for sufficiently small any $\ep>0$, which satisfies certain estimates on 
$\mca{L}_\ep(\gamma_\ep)$ and the Morse index. 
Then, we can get a billiard trajectory $\gamma$ as a limit of $\gamma_\ep$ as $\ep \to 0$, which satisfies corresponding estimates on $\len(\gamma)$ and 
$\sharp \mca{B}_\gamma$. 
The precise statement is Proposition \ref{prop:limit}, and Section 3 is devoted to its proof. 
Combining results in Sections 2 and 3, we will complete the proof of Theorem \ref{mainthm} in Section 4. 

The above strategy of the proof is heavily influenced by \cite{BG}. 
In particular, our arguments in Sections 2 and 3 closely follow the arguments in \cite{BG}. 
Nevertheless, we explain most details for the reader's convenience.

\subsection{Short billiard trajectory} 
As an application of Theorem \ref{mainthm}, we prove the existence of short billiard trajectories. 
First let us state the result. 
Let $Q$ be a compact, connected Riemannian manifold with nonempty boundary. 
$r(Q)$ denotes the \textit{inradius} of $Q$, i.e. $r(Q):= \max_{q \in Q} \dist(q,\partial Q)$. 
It is easy to see that $r(Q) < \infty$. 

\begin{thm}\label{thm:short} 
Let $j$ be a positive integer such that $H_j(Q,\partial Q:\Z) \ne 0$. 
Then, there exist following billiard trajectories on $Q$: 
\begin{itemize}
\item A brake billiard trajectory $\gamma_B$, such that $\sharp \mca{B}_{\gamma_B}  \le j-1$, $\len(\gamma_B) \le 2j r(Q)$. 
\item A periodic billiard trajectory $\gamma_P$, such that $\sharp \mca{B}_{\gamma_P} \le j+1$, $\len(\gamma_P) \le 2(j+1)r(Q)$. 
\end{itemize}
\end{thm}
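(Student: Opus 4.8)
The plan is to deduce Theorem \ref{thm:short} from Theorem \ref{mainthm} by choosing the parameters $a$, $b$ cleverly and identifying the relevant relative homology groups. The key point is that for small $\delta>0$, the inradius hypothesis lets us understand the homotopy type of the ``thickened'' loop/path spaces $\Lambda^b(Q)\cup\Lambda_\delta(Q)$ and $\Omega^b(Q)\cup\Omega_\delta(Q)$.

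\textbf{Step 1: choosing the energy levels.} For the brake case I would set $b := 2j^2 r(Q)^2 + \eta$ for a tiny $\eta>0$ and $a$ small; then $\sqrt{2b}$ is slightly more than $2jr(Q)$, so the length estimate $\len(\gamma_B)\in[\sqrt{2a},\sqrt{2b}]$ delivered by Theorem \ref{mainthm}(i) gives $\len(\gamma_B)\le 2jr(Q)$ after letting $\eta\to 0$ (using compactness: a sequence of brake billiard trajectories with uniformly bounded length and bounded number of bounce times subconverges to one — this is essentially the content of Proposition \ref{prop:limit}). Similarly for the periodic case with $j$ replaced by $j+1$. The bound on $\sharp\mca{B}_\gamma$ in Theorem \ref{mainthm} is $j-2$ (brake) and $j$ (periodic) \emph{for the homology degree there}, so I will need to apply Theorem \ref{mainthm} in degree $j+1$ for the brake case and degree $j+1$ for the periodic case; the shift by one comes from the suspension-type isomorphism below, and then $\sharp\mca{B}_{\gamma_B}\le (j+1)-2 = j-1$ and $\sharp\mca{B}_{\gamma_P}\le j+1$, exactly as claimed.

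\textbf{Step 2: identifying the relative homology.} This is the heart of the argument. I claim that for $b$ large enough (comparable to $r(Q)^2$ times $j^2$) and $a$, $\delta$ small,
\[
H_{j+1}\bigl(\Lambda^b(Q)\cup\Lambda_\delta(Q),\ \Lambda^a(Q)\cup\Lambda_\delta(Q)\bigr) \cong H_j(Q,\partial Q;\Z),
\]
and similarly $H_{j+1}(\Omega^b(Q)\cup\Omega_\delta(Q),\Omega^a(Q)\cup\Omega_\delta(Q))$ contains $H_j(Q,\partial Q;\Z)$ as a summand or at least is nonzero when the former is. The idea: a path or loop of very small energy is $C^0$-close to a constant, so $\Lambda^a(Q)$ deformation retracts onto the constants $Q$, and similarly $\Omega^a(Q)\simeq Q$; under this retraction $\Lambda_\delta(Q)\cap\Lambda^a(Q)$ and $\Lambda_\delta(Q)$ correspond to a neighborhood of $\partial Q$, i.e. to something homotopy equivalent to $\partial Q$. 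On the other end, with $b\sim (2jr(Q))^2/2$ one can push any loop or path into the collar $Q\setminus Q(\delta)$ by a length-controlled homotopy (flowing toward $\partial Q$ costs length at most $2r(Q)$ per ``round trip''), so $\Lambda^b(Q)\cup\Lambda_\delta(Q)$ becomes all of $\Lambda(Q)$ up to homotopy — but $\Lambda(Q)\simeq Q$, contractible onto $Q$ as well, so this is not quite right; the correct statement is that the \emph{pair} collapses and excision plus the long exact sequence of $(Q,\partial Q)$ produces the degree shift. I would make this precise by building an explicit deformation retraction of $\Lambda^b(Q)\cup\Lambda_\delta(Q)$ onto $\Lambda(Q(\text{something}))\cup\Lambda_\delta(Q)$ and using the (relative) evaluation-at-endpoints maps to compare with $(Q,\partial Q)$.

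\textbf{Main obstacle.} The hard part is Step 2 — constructing the length-controlled deformations and carefully tracking how the subspaces $\Lambda_\delta(Q)$, $\Omega_\delta(Q)$ behave under them, so that the relative homology genuinely computes $H_j(Q,\partial Q)$ rather than something larger. In particular one must ensure the homotopies are uniform in $\delta$ so that the inverse limit $\varprojlim_{\delta\to 0}$ is nonzero (not just each term), and one must handle the difference between based/free loops and paths with constrained endpoints. A clean way is to work with a triangulation: take a relative cycle in $C_j(Q,\partial Q)$ representing a nonzero class, turn each simplex into a family of short ``out-and-back'' paths hitting $\partial Q$, and verify this family represents a nonzero element of the inverse limit by pairing against the billiard trajectory produced — but the geometric input that an inradius-$r(Q)$ manifold admits such short connecting paths (any point is within $r(Q)$ of $\partial Q$) is exactly what forces the length bound $2jr(Q)$, so the estimates in Steps 1 and 2 are linked and must be done together.
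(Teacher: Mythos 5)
Your Step 1 matches the paper's bookkeeping: one applies Theorem \ref{mainthm} in homological degree $j+1$, which is where the bounce bounds $\sharp \mca{B}_{\gamma_B}\le j-1$ and $\sharp \mca{B}_{\gamma_P}\le j+1$ come from, and taking the upper energy level just above $(2jr(Q))^2/2$ (resp.\ $(2(j+1)r(Q))^2/2$) gives the length bounds after a limiting argument. The triangulated out-and-back families you sketch at the end are also exactly the geometric construction the paper uses. The gap is in Step 2, and you have essentially flagged it yourself (``this is not quite right''): you try to \emph{compute} $H_{j+1}(\Lambda^b(Q)\cup\Lambda_\delta(Q),\Lambda^a(Q)\cup\Lambda_\delta(Q))$ and identify it with $H_j(Q,\partial Q)$, but no such isomorphism is available --- for large $b$ the sublevel set $\Lambda^b(Q)$ can have complicated topology, and nothing forces the pair to ``collapse'' onto $(Q,\partial Q)$. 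Moreover, your proposed verification of nonvanishing by ``pairing against the billiard trajectory produced'' is circular: the trajectory is the output of the nonvanishing, not an input to it.

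What is actually needed is only \emph{nonvanishing}, and the mechanism is a connecting homomorphism in the long exact sequence of the triple $(\Lambda^b\cup\Lambda_\delta,\ \Lambda^a\cup\Lambda_\delta,\ \Lambda_\delta)$. The paper packages this as a capacity $c^\Lambda(Q:\alpha)$, defined as the infimum of $c$ such that the inclusion-induced map $H_j(Q,\partial Q)\to H_j(\Lambda^{c^2/2}(Q)\cup\Lambda_\partial(Q),\Lambda_\partial(Q))$ kills $\alpha$, and proves two separate facts. (a) For small $a>0$ the image of $\alpha$ in $\plim_{\delta\to 0}H_j(\Lambda^a\cup\Lambda_\delta,\Lambda_\delta)$ is nonzero; this is done not by retracting $\Lambda^a(Q)$ onto constants (which interacts badly with the subspaces $\Lambda_\delta$) but by the evaluation map $\ev(\gamma)=\gamma(0)$, using that $\gamma\in\Lambda^a\cap\Lambda_\delta$ forces $\gamma(0)\notin Q(\delta+\sqrt{2a})$. (b) For $a>(2jr(Q))^2/2$ the image of $\alpha$ in $H_j(\Lambda^a\cup\Lambda_\partial,\Lambda_\partial)$ vanishes; this is where your out-and-back families enter, giving an explicit null-homotopy rel $\Lambda_\partial(Q)$ of the cycle of constant paths, with energy controlled by $r(Q)$ via Lemma \ref{lem:shortpath} and the star-subordinate functions of Lemma \ref{lem:polyhedron}. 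Facts (a) and (b) together show, via the connecting map, that $H_{j+1}$ of the pair at energies straddling $c^\Lambda(Q:\alpha)^2/2\le 2j^2r(Q)^2$ is nonzero, which is the hypothesis of Theorem \ref{mainthm}; working with $\Lambda_\partial$ at the upper level also resolves your worry about uniformity in $\delta$, since the $\Lambda_\partial$-statement maps compatibly to every $\delta$-level. Without this two-sided argument your proof does not close.
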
 
\begin{rem}
The author knows very little about examples in which the above estimates are sharp. 
It is easy to see that the estimates are sharp for $j=1$: consider the case $Q$ is a line segment. 
For $j=2$, estimates $\sharp \mca{B}_{\gamma_B} \le 1$ and $\sharp \mca{B}_{\gamma_P} \le 3$ are sharp:
there exists a planar domain which does not contain any bouncing ball orbits, see Figure 1-(b) in \cite{Gh}.
\end{rem}

Theorem \ref{thm:short} is proved in Section 5. 
In this subsection, we explain some consequences of Theorem \ref{thm:short}.  
Let us introduce the following notations:  
\begin{align*}
\mu_B(Q)&:= \inf \{ \len(\gamma) \mid \gamma: \text{brake billiard trajectory on $Q$} \}, \\
\mu_P(Q)&:= \inf \{ \len(\gamma) \mid \gamma: \text{periodic billiard trajectory on $Q$} \}.
\end{align*}
As an immediate consequence of Theorem \ref{thm:short}, we obtain the following estimate. 

\begin{cor}\label{cor:short}
Let $n$ denote the dimension of $Q$. 
Then, there holds $\mu_B(Q) \le 2n r(Q)$, $\mu_P(Q) \le 2(n+1) r(Q)$. 
\end{cor}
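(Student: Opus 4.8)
The plan is to deduce this directly from Theorem~\ref{thm:short}. It suffices to exhibit, for any compact connected $n$-manifold $Q$ with nonempty boundary, an integer $j$ with $1 \le j \le n$ and $H_j(Q,\partial Q;\Z) \ne 0$: once such a $j$ is fixed, Theorem~\ref{thm:short} yields a brake billiard trajectory of length $\le 2jr(Q) \le 2nr(Q)$ and a periodic billiard trajectory of length $\le 2(j+1)r(Q) \le 2(n+1)r(Q)$, which are exactly the asserted bounds on $\mu_B(Q)$ and $\mu_P(Q)$ (recall $r(Q)<\infty$).

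First I would dispose of the orientable case. If $Q$ is orientable, Lefschetz duality gives $H_n(Q,\partial Q;\Z) \cong H^0(Q;\Z) \cong \Z \ne 0$, the last isomorphism because $Q$ is connected; so one may take $j=n$. The remaining case is $Q$ non-orientable, which forces $n \ge 2$ since a compact connected $1$-manifold with boundary is an interval, hence orientable. Here $H_n(Q,\partial Q;\Z) = 0$, so one must produce a class one degree lower. The idea is to pass to $\Z/2$ coefficients, for which Lefschetz duality is unconditional: $H_n(Q,\partial Q;\Z/2) \cong H^0(Q;\Z/2) \cong \Z/2 \ne 0$. Feeding this into the universal coefficient exact sequence
\[
0 \longrightarrow H_n(Q,\partial Q;\Z)\otimes \Z/2 \longrightarrow H_n(Q,\partial Q;\Z/2) \longrightarrow \mathrm{Tor}\bigl(H_{n-1}(Q,\partial Q;\Z),\,\Z/2\bigr) \longrightarrow 0
\]
and using $H_n(Q,\partial Q;\Z)=0$, one gets $\mathrm{Tor}\bigl(H_{n-1}(Q,\partial Q;\Z),\Z/2\bigr) \cong \Z/2 \ne 0$, hence $H_{n-1}(Q,\partial Q;\Z)\ne 0$; so one may take $j=n-1\ge 1$. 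In both cases $j\le n$, which is all that was needed.

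I do not expect a genuine obstacle: the corollary is a formal consequence of Theorem~\ref{thm:short} together with elementary (co)homological duality. The only point deserving care is the non-orientable case, where the $\Z$-coefficient fundamental class of the pair $(Q,\partial Q)$ is unavailable and one instead extracts a nonzero relative homology class in degree $n-1$ from the mod-$2$ reduction; that is the one spot where a reader might wish to see the universal coefficient computation spelled out.
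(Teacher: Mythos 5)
Your argument is correct and is exactly the intended deduction: the paper states the corollary as an ``immediate consequence'' of Theorem~\ref{thm:short} without spelling out the topological input, namely that some $j\le n$ has $H_j(Q,\partial Q;\Z)\ne 0$. Your treatment of the non-orientable case via $\Z/2$ Lefschetz duality and the universal coefficient sequence correctly supplies the one detail the paper leaves implicit.
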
 

The above estimate of  $\mu_P$ was already proved in \cite{ArOs} for convex domains in $\R^n$, and 
in \cite{Ir}  for arbitrary domains in $\R^n$. 
For other previous results on short periodic billiard trajectories, see \cite{ArOs} Section 1,  and references therein. 

Another consequence of Theorem \ref{thm:short} is a new proof of the following result on short geodesic loops, which is due to \cite{Rot}. 
The original proof in \cite{Rot} is based on the Birkhoff curve shortening process, and seems quite different from our arguments. 

\begin{cor}[Rotman \cite{Rot}]\label{cor:gdloop}
Let $M$ be a closed Riemannian manifold, $p \in M$, and $j$ be a positive integer. 
If $\pi_j(M,p) \ne 0$, there exists a nonconstant geodesic loop $\gamma$ at $p$
(i.e. a geodesic path $\gamma:[0,1] \to M$ such that $\gamma(0), \gamma(1)=p$) 
such that $\len(\gamma) \le 2j \diam (M)$. 
\end{cor}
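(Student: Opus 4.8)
The plan is to deduce the statement from Theorem \ref{thm:short}, applied not to $M$ itself (which has empty boundary) but to a compact manifold-with-boundary obtained by removing a small metric ball around $p$. Concretely, fix a small radius $\rho>0$ (to be sent to $0$ at the end) and set $Q:=M\setminus B_\rho(p)$, a compact connected Riemannian manifold with nonempty boundary $\partial Q = \partial B_\rho(p)$. First I would record two facts about this $Q$. On the one hand, $r(Q)\le \diam(M)$, and in fact $r(Q)\to \diam(M)$-type control is not needed: the crude bound $r(Q)\le\diam(M)$ suffices since $\dist(q,\partial Q)\le\dist(q,p)+\rho\le\diam(M)+\rho$ for all $q$, and one can absorb the $\rho$ at the end. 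On the other hand, I would check that $\pi_j(M,p)\ne 0$ forces $H_j(Q,\partial Q;\Z)\ne 0$ for $\rho$ small. For this, note $(Q,\partial Q)$ is homotopy equivalent to $(M\setminus\{p\}, \,\text{small punctured nbhd})$, and by excision / Lefschetz duality on the closed manifold $M$ one has $H_j(M,M\setminus\{p\};\Z)\cong \tilde H_{n-j}$-type data; more directly, the long exact sequence of $(M, Q)$ together with $H_*(M,Q)\cong H_*(B_\rho(p),\partial B_\rho(p))$ (excision) and the fact that $M$ is a closed $n$-manifold lets one compare $H_j(Q,\partial Q)$ with $H_j(M)$ and $H_{j-1}$ of a sphere. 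The cleanest route: if $\pi_j(M,p)\ne 0$ then by Hurewicz-type reasoning or simply because a nonconstant homotopy class must be detected, one argues $H_j(Q,\partial Q;\Z)\ne 0$; I would spell this out using that $Q\hookrightarrow M$ is $(n-1)$-connected so $\pi_j(Q,p)\to\pi_j(M,p)$ is onto for $j\le n-2$ and iso considerations handle the boundary cases, then pass to homology.

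Granting $H_j(Q,\partial Q;\Z)\ne 0$, Theorem \ref{thm:short} produces a brake billiard trajectory $\gamma_B$ on $Q$ with $\sharp\mca B_{\gamma_B}\le j-1$ and $\len(\gamma_B)\le 2j\,r(Q)\le 2j(\diam(M)+\rho)$. The key observation is that a brake billiard trajectory on $Q=M\setminus B_\rho(p)$ is a concatenation of geodesic segments in $M$ that meets $\partial B_\rho(p)$ perpendicularly at its endpoints and reflects off $\partial B_\rho(p)$ at its (at most $j-1$) bounce times. As $\rho\to 0$, the sphere $\partial B_\rho(p)$ shrinks to the point $p$: the endpoints converge to $p$, the bounce points converge to $p$, and the finitely many geodesic segments converge (after passing to a subsequence, using the length bound to get compactness of the trajectories in $W^{1,2}$ or $C^0$) to a concatenation of geodesic segments through $p$. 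In the limit I obtain a nonconstant piecewise-geodesic loop based at $p$ of length $\le 2j\,\diam(M)$. The final step is to argue that the limit is in fact an honest geodesic loop, i.e. smooth at $p$; this is where reflection off a vanishingly small sphere is crucial — a billiard reflection at a point of $\partial B_\rho(p)$ becomes, in the limit $\rho\to 0$, no constraint at all on the incoming and outgoing directions, so the intermediate breakpoints at $p$ disappear and the two halves of the brake trajectory join smoothly into a single geodesic path from $p$ to $p$.

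The main obstacle I expect is this last limiting/smoothing step: one must rule out that the limit loop is constant, and one must show that the broken-geodesic limit genuinely straightens out at $p$ rather than retaining a corner. Non-constancy follows from a lower length bound — $\gamma_B$ must exit $B_\rho(p)$ and return, and more importantly the homological nontriviality of $(Q,\partial Q)$ prevents $\gamma_B$ from being trivial, so $\len(\gamma_B)$ is bounded below by something like the length of a shortest geodesic of $M$, independent of $\rho$; hence the limit is nonconstant. For the straightening, I would argue that each bounce point $\gamma_B(t_i)$ lies on $\partial B_\rho(p)$ and the incoming and outgoing velocities at $t_i$ are related by reflection across $T_{\gamma_B(t_i)}\partial B_\rho(p)$, which is a hyperplane at distance $\rho$ from $p$; as $\rho\to 0$ the reflection law degenerates and the two adjacent geodesic arcs, both now emanating from $p$, are forced by the reflection identity (which says the tangential components of $\dot\gamma^\pm$ agree) to become tangent, i.e. to form a single geodesic through $p$. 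Iterating over the $\le j-1$ bounce points and the two perpendicular endpoints collapses everything to one geodesic loop at $p$. The bound $\len(\gamma)\le 2j\,\diam(M)$ is then immediate by lower semicontinuity of length under the limit. (The periodic trajectory $\gamma_P$ from Theorem \ref{thm:short} is not needed here.)
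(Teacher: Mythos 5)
Your overall strategy (delete a small ball around $p$, apply Theorem \ref{thm:short} to $Q=M\setminus B_\rho(p)$, let $\rho\to 0$) is the same as the paper's, but the crucial limiting step is wrong. You claim that as $\rho\to 0$ the reflections off $\partial B_\rho(p)$ degenerate so that the broken trajectory straightens into a single smooth geodesic loop at $p$. The law of reflection preserves the tangential component of the velocity and reverses the normal component; in the limit the normal at the bounce point can be an arbitrary unit vector at $p$, so the only surviving constraint is $|\dot{\gamma}^+|=|\dot{\gamma}^-|$ --- which permits an arbitrary corner at $p$, not none. (A trajectory hitting the tiny sphere head-on limits to a segment traversed back and forth, i.e.\ a corner of angle $\pi$ at $p$.) Thus the limit of the whole brake trajectory is a concatenation of geodesic loops at $p$ with genuine corners, not one geodesic loop. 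The fix --- and what the paper does --- is to keep only one arc: set $\tau_\rho:=\min\{t>0\mid \gamma_B(t)\in\partial Q\}$ and pass to the limit of $\gamma_B|_{[0,\tau_\rho]}$, which is already a genuine geodesic of $M$ with both endpoints on $\partial B_\rho(p)$; its limit is then automatically a geodesic path from $p$ to $p$ with the same length bound. Relatedly, your non-constancy argument (``homological nontriviality bounds $\len(\gamma_B)$ below'') is unjustified, and in any case a lower bound on the total length says nothing about the one arc that survives the limit. The paper's argument is concrete: $\dot{\gamma}_B(0)$ is nonzero and perpendicular to the sphere $\partial B_\rho(p)$, hence radial, so inside the injectivity radius the first arc is the radial geodesic and must reach the sphere of radius $\rho(M)$ about $p$; the limit loop therefore meets that sphere and cannot be constant.

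There is also a gap in the topological input. You assert that $\pi_j(M,p)\ne 0$ forces $H_j(Q,\partial Q;\Z)\ne 0$ for small $\rho$; this is false as stated (take $M=S^2$, $j=3$). By excision one has $H_j(Q,\partial Q)\cong H_j(M)$ for $j\ge 1$, and $\pi_j(M)\ne 0$ does not imply $H_j(M)\ne 0$; your remark that ``a nonconstant homotopy class must be detected'' by homology is not true. The correct route is the paper's reduction: dispose of $j=1$ separately (if $\pi_1(M)\ne 0$ the case $j=1$ already yields a loop of length $\le 2\diam(M)\le 2j\diam(M)$), then assume $M$ simply connected and replace $j$ by the smallest degree $j'\le j$ with $\pi_{j'}(M)\ne 0$, where the Hurewicz theorem gives $H_{j'}(M)\cong\pi_{j'}(M)\ne 0$ and the resulting bound $2j'\diam(M)$ is at most $2j\diam(M)$.
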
 

Our idea to prove Corollary \ref{cor:gdloop}  is to take a short brake billiard trajectory on $\{ x \in M \mid \dist(x,p) \ge \ep\}$ and let $\ep \to 0$. 
Details will be explained in Section 5.3. 

\subsection{The length of a shortest periodic billiard trajectory on a convex body}
A convex set $K \subset \R^n$ is called a \textit{convex body}, if $K$ is compact and $\interior K \ne \emptyset$. 
It is possible to show that, for any convex body with smooth boundary $K$, there exists a periodic billiard trajectory on $K$ of length $\mu_P(K)$
(see Remark \ref{rem:shortest}).

Let us recall two remarkable geometric inequalities on $\mu_P$ of convex bodies, 
which are proved in \cite{ArOs} and \cite{Gh}.
In Section 6, we recover these results using our method. 
A recent paper \cite{ABKS} obtains similar proofs based on the technique in \cite{BB}, 
 in a more general setting of Finsler billiards. 

The first one is the Brunn-Minkowski type inequality \cite{ArOs}.
For any two convex bodies $K_1, K_2 \subset \R^n$, their \textit{Minkowski sum} 
$K_1+K_2:= \{x_1 + x_2 \mid x_1 \in K_1, x_2 \in K_2\}$ is again a convex body. 
The following result is proved in \cite{ArOs}, based on their Brunn-Minkowski type inequality for 
symplectic capacity \cite{ArOs_old}.

\begin{thm}[Artstein-Ostrover \cite{ArOs}]\label{thm:BruMin}
Let $K_1, K_2$ be convex bodies in $\R^n$. 
Suppose that $K_1$, $K_2$ and $K_1+K_2$ have smooth boundaries. Then 
\[
\mu_P(K_1+K_2) \ge \mu_P(K_1) + \mu_P(K_2).
\]
Equality holds if and only if there exists a closed curve which, up to parallel displacement and scaling, 
is a shortest periodic billiard trajectory of both $K_1$ and $K_2$. 
\end{thm}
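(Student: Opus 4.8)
The plan is to reduce the Brunn-Minkowski type inequality to the fact, provided by Theorem \ref{mainthm}(ii), that $\mu_P$ is detected by the topology of the loop-space pair $(\Omega^b(Q)\cup\Omega_\delta(Q),\,\Omega^a(Q)\cup\Omega_\delta(Q))$ in the limit $\delta\to0$, and to an additivity property of the relevant Lagrangian action under Minkowski sum. First I would recall the variational characterization: for a convex body $K$ with smooth boundary, $\mu_P(K)$ equals $\sqrt{2c(K)}$, where $c(K)$ is the smallest positive level $a$ for which the above inverse-limit relative homology (in some degree $j$) becomes nonzero; by Remark \ref{rem:shortest} this infimum is attained by an honest periodic billiard trajectory $\gamma_K$. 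The key geometric input is that a periodic billiard trajectory in $K$, viewed in the support-function/normal parametrization, is governed by the gauge (Minkowski) functional of $K$, and the gauge functional is sublinear under Minkowski sum: writing $L$ for the billiard length functional on closed curves, one has $L_{K_1+K_2}(\gamma)\le L_{K_1}(\gamma_1)+L_{K_2}(\gamma_2)$ when $\gamma$ is the appropriate "sum" of curves $\gamma_1,\gamma_2$, with equality exactly when $\gamma_1,\gamma_2$ are homothetic translates of a common curve.

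The main steps, in order, would be: (1) Express $\mu_P(K)^2/2$ as the minimal sublevel at which the inverse-limit homology of the pair of loop spaces of $K$ jumps, using Theorem \ref{mainthm}(ii) together with a matching lower bound (no periodic billiard trajectory shorter than $\mu_P(K)$ implies the homology vanishes below that level, which follows from the Morse-theoretic deformation in Section 2 applied in the limit $\delta\to 0$). (2) Show that a minimizing periodic billiard trajectory $\gamma$ in $K_1+K_2$ of length $\mu_P(K_1+K_2)$ decomposes, via its outward-normal directions at the bounce points, into closed curves inscribed in (translates of) $K_1$ and $K_2$ respectively; concretely, at each reflection the velocity jump lies along the outer normal, and the support function of $K_1+K_2$ is the sum of those of $K_1,K_2$, so the "width" contributions add. (3) Bound $\mu_P(K_i)$ from below by the length of the corresponding inscribed closed curve for $K_i$, giving $\mu_P(K_1+K_2)=L_{K_1+K_2}(\gamma)\ge L_{K_1}(\gamma_1)+L_{K_2}(\gamma_2)\ge \mu_P(K_1)+\mu_P(K_2)$. (4) Trace through the equality case: equality in step (2)-(3) forces $\gamma_1$ and $\gamma_2$ to be, up to translation and scaling, the same closed curve, which is then a shortest periodic billiard trajectory of each $K_i$; conversely such a common curve is easily checked to make both inequalities equalities.

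The hard part will be step (2): making precise the sense in which a periodic billiard trajectory in a Minkowski sum "splits" into trajectories in the summands, and in which the length functional is additive under this splitting. The cleanest route is probably to pass through the support-function (dual) description of billiards in convex bodies, where $\mu_P(K)$ is a symplectic-capacity-type quantity and Minkowski sum on the $K$-side corresponds to a fiberwise-sum operation on the dual side that is manifestly sub/super-additive; this is exactly the mechanism of \cite{ArOs_old}, and the point here is to re-derive it from the loop-space picture rather than from symplectic capacities. One must be careful that the smoothness hypothesis on $K_1$, $K_2$ and $K_1+K_2$ is used precisely where the billiard law of reflection and the attainment of $\mu_P$ require smooth boundary, and that the deformation argument underlying step (1) respects the inverse limit over $\delta\to0$ uniformly enough to compare the three bodies. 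Once the additivity of the length functional under this splitting is established, steps (1), (3), (4) are routine bookkeeping with the definitions in the excerpt.
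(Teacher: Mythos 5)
There is a genuine gap, and in fact the central inequality of your sketch points the wrong way. In steps (2)--(3) you propose to split a shortest trajectory $\gamma$ of $K_1+K_2$ into closed curves $\gamma_1,\gamma_2$ attached to $K_1,K_2$ via the outer normals at the bounce points, and then claim $L_{K_1+K_2}(\gamma)\ge L_{K_1}(\gamma_1)+L_{K_2}(\gamma_2)$. But if $\gamma=\gamma_1+\gamma_2$ pointwise (which is what the normal/support-function decomposition produces: $\gamma_i(t)$ is the boundary point of $K_i$ with outer normal $\nu(t)$, and these sum to the boundary point of $K_1+K_2$), then the triangle inequality gives $\len(\gamma)\le\len(\gamma_1)+\len(\gamma_2)$, with equality only when the summands are positively homothetic on each segment --- which is exactly the equality case you are trying to detect, not something you may assume. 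So the chain $\mu_P(K_1+K_2)\ge\len(\gamma_1)+\len(\gamma_2)\ge\mu_P(K_1)+\mu_P(K_2)$ does not close. Moreover, bounding $\mu_P(K_i)$ from above by $\len(\gamma_i)$ requires knowing that $\gamma_i$ cannot be translated into $\interior K_i$; that is precisely the variational characterization $\mu_P(K)=\mu_P^+(K)$ (Corollary \ref{cor:convex-4}), which your proposal never isolates. You acknowledge step (2) is ``the hard part'' and defer to the mechanism of \cite{ArOs_old}; as written, that step is not supplied and, in the form sketched, would fail. A smaller issue: in step (1) you assert that absence of billiard trajectories below level $\mu_P(K)^2/2$ forces the relative homology to vanish there ``by the Morse-theoretic deformation in Section 2 in the limit $\delta\to0$''; the paper does not prove such a deformation statement in the singular limit, and instead establishes the vanishing by an explicit null-homotopy $F(w,s)(t)=w+s\gamma(t)$ built from a curve $\gamma\in\mca{P}^+(K)$ (Proposition \ref{prop:convex-3}).

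The paper's actual argument avoids splitting altogether. It first proves $c^\Omega(K)=\mu_P(K)=\mu_P^+(K)$, where $\mu_P^+(K)$ is the infimal length of a closed piecewise geodesic curve no translate of which lies in $\interior K$. Then, given any $\gamma$ with $\len(\gamma)<\mu_P(K_1)+\mu_P(K_2)$, one writes $\gamma=a_1\gamma+a_2\gamma$ with $a_j=\mu_P(K_j)/(\mu_P(K_1)+\mu_P(K_2))$; since $\len(a_j\gamma)<\mu_P^+(K_j)$, each $a_j\gamma$ translates into $\interior K_j$, hence $\gamma$ translates into $\interior(K_1+K_2)$ and so $\gamma\notin\mca{P}^+(K_1+K_2)$. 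Here the two summands are homothetic by construction, so lengths add exactly and the superadditivity is immediate (Lemma \ref{lem:convex-6}). The equality case is then handled by showing that a length-minimizing element of $\mca{P}^+(K)$ of constant speed is, up to translation, an actual billiard trajectory (Lemma \ref{lem:convex-5}). If you want to salvage your route, the fix is to adopt this $\mca{P}^+$ characterization and replace the normal-direction splitting by the scaling decomposition.
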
 

The second one is a lower estimate of $\mu_P$ by inradius, which is proved in \cite{Gh} by beautiful elementary arguments.
$\wid (K)$ denotes the thickness of the narrowest slab which contains $K$. 

\begin{thm}[Ghomi \cite{Gh}]\label{thm:Ghomi}
For any convex body $K \subset \R^n$ with smooth boundary, there holds 
$\mu_P(K) \ge 4r(K)$. 
Equality holds if and only if $2r(K)=\wid (K)$. 
In this case, every shortest periodic billiard trajectory on $K$ is a bouncing ball orbit.
\end{thm}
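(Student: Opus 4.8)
The plan is to deduce Ghomi's inequality $\mu_P(K) \ge 4r(K)$ directly from Corollary \ref{cor:short} applied to a carefully chosen submanifold, together with an elementary convex-geometric argument for the equality case. The starting observation is that Corollary \ref{cor:short} only gives an \emph{upper} bound on $\mu_P$, so a naive application is useless; instead I would exploit the interplay between a shortest periodic billiard trajectory on $K$ and the geometry of slabs. First I would recall that since $\partial K$ is smooth, a shortest periodic billiard trajectory $\gamma$ of length $\mu_P(K)$ exists (Remark \ref{rem:shortest}). Let $\ell$ be a diameter of a largest inscribed ball, realized along some unit direction $u$, so that the two endpoints of this diameter segment of length $2r(K)$ lie on $\partial K$ and the supporting hyperplanes there are orthogonal to $u$. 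The key geometric fact is that for any periodic billiard trajectory $\gamma$ and any unit vector $u$, the total variation of the function $t \mapsto \langle \gamma(t), u\rangle$ is at least twice the width of $K$ in the direction $u$ whenever $\gamma$'s convex hull ``spans'' the slab — and more robustly, one has the estimate $\len(\gamma) \ge 2\,\wid_u(K)$ coming from the fact that the trajectory, being a closed billiard path, cannot be contained in the interior and must traverse the full width in the $u$-direction at least twice. Taking $u$ to be the direction achieving $\wid(K)$ and using $\wid(K) \ge 2r(K)$ gives $\mu_P(K) \ge 2\wid(K) \ge 4r(K)$.

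More carefully, for the main inequality I would argue as follows. Fix a shortest trajectory $\gamma$. For an arbitrary unit vector $u$, consider $f(t) := \langle \gamma(t), u \rangle$. Between consecutive bounce times $f$ is affine, and at a bounce time the law of reflection B-(ii) forces the incoming and outgoing velocities to differ by a nonzero normal vector; in particular $\gamma$ cannot be tangent to $\partial K$ at a bounce point. Since $\gamma$ is a nonconstant closed curve, $f$ attains its max and min on $S^1$, say at parameters $t_{\max}, t_{\min}$; at these extrema $\dot f = 0$, which (because $f$ is piecewise affine) forces $t_{\max}, t_{\min} \in \mca{B}_\gamma$, i.e. $\gamma(t_{\max}), \gamma(t_{\min}) \in \partial K$. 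The Cauchy–Schwarz / length estimate gives
\[
\len(\gamma) = \int_{S^1} |\dot\gamma|\,dt \ge \int_{S^1} |\dot f|\,dt \ge 2\bigl( f(t_{\max}) - f(t_{\min}) \bigr),
\]
the last factor $2$ because $\gamma$ is closed, so it goes from max to min and back. Now choose $u$ so that $f(t_{\max}) - f(t_{\min})$ is as large as possible; since both endpoints lie on $\partial K$ and $K$ is convex, this supremum over $u$ and over $\gamma$ is at least $\wid(K)$ — indeed any diameter of an inscribed ball of radius $r(K)$ gives a chord of $K$ of length $2r(K)$ orthogonal to two parallel supporting hyperplanes, and one checks $\wid(K) \ge 2r(K)$ always. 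Hence $\mu_P(K) = \len(\gamma) \ge 2\wid(K) \ge 4r(K)$.

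For the equality case, suppose $\mu_P(K) = 4r(K)$. Then all inequalities above are equalities: $2\wid(K) = 4r(K)$ so $\wid(K) = 2r(K)$; and $\len(\gamma) = 2(f(t_{\max}) - f(t_{\min}))$ with $|\dot\gamma| = |\dot f|$ a.e., which forces $\dot\gamma(t) = \pm |\dot\gamma| u$ for a.e. $t$, i.e. $\gamma$ moves only along the $u$-axis. Combined with $\gamma$ being a closed billiard path hitting $\partial K$ exactly at the two extremal points (any extra bounce would be at an interior value of $f$, contradicting $|\dot\gamma|=|\dot f|$ with affine pieces), we conclude $\gamma$ is a bouncing ball orbit along direction $u$ of length $2\wid_u(K) = 2\wid(K) = 4r(K)$, and every shortest trajectory is of this form. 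Conversely, if $\wid(K) = 2r(K)$, the slab of width $\wid(K)$ is tangent to the inscribed ball of radius $r(K)$, and the diameter of that ball orthogonal to the slab is a bouncing ball orbit of length $2r(K)\cdot 2 = 4r(K)$ (its endpoints are on $\partial K$ with $\partial K$ orthogonal to $u$ there, since the supporting hyperplanes of the width slab touch $\partial K$ exactly at those points when $\wid(K)=2r(K)$), giving $\mu_P(K) \le 4r(K)$; together with the main inequality, equality holds.

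\medskip
The main obstacle I anticipate is making rigorous the two convex-geometric claims used above: first, that for \emph{every} periodic billiard trajectory (not just the shortest) the extrema of $\langle\gamma,u\rangle$ are genuine bounce times and lie on $\partial K$ — this needs care because $\gamma^{-1}(\partial K)$ can be strictly larger than $\mca{B}_\gamma$ (tangential touchings, as noted in the Remark after the definition), and one must rule out the extremum occurring at such a tangency; and second, the sharp statement $\sup_u \bigl(f_u(t_{\max}) - f_u(t_{\min})\bigr) \ge \wid(K)$ and its relation to $r(K)$, i.e. the elementary fact $\wid(K) \ge 2r(K)$ with equality characterization, which is classical but should be cited or proved. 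A secondary technical point is the equality-case rigidity: deducing from $|\dot\gamma| = |\langle\dot\gamma,u\rangle|$ a.e.\ plus the reflection law that $\gamma$ is literally a segment traversed back and forth requires checking that the piecewise-affine structure forbids any intermediate bounce, which I would handle by the monotonicity of $f$ on each half-period.
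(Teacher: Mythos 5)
Your proposed main inequality is false, so the argument collapses at its central step. What you actually prove is $\len(\gamma)\ge 2\bigl(f_u(t_{\max})-f_u(t_{\min})\bigr)$ for every direction $u$, i.e. $\len(\gamma)\ge 2\,\diam(\gamma(S^1))$ after optimizing over $u$; the unjustified claim is that $\sup_u\bigl(f_u(t_{\max})-f_u(t_{\min})\bigr)\ge\wid(K)$, i.e. that the trajectory's diameter is at least the width of $K$. The fact that the two extremal points lie on $\partial K$ gives nothing: two boundary points can be arbitrarily close. Concretely, for a slightly smoothed equilateral triangle of side $s$ the Fagnano orbit (the inscribed medial/orthic triangle) is a periodic billiard trajectory with $\len=3s/2$, whereas $2\wid(K)=\sqrt{3}\,s>3s/2$; so $\mu_P(K)\ge 2\wid(K)$ fails. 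Indeed Ghomi's paper proves the \emph{opposite} inequality $\mu_P(K)\le 2\wid(K)$ in general. Worse, even the weaker estimate you would need, $\diam(\gamma(S^1))\ge 2r(K)$, fails for the same orbit: $\diam(\gamma(S^1))=s/2<2r(K)=s/\sqrt{3}$. So no choice of direction $u$ rescues the chain $\len(\gamma)\ge 2\,\diam(\gamma(S^1))\ge 4r(K)$, and the equality analysis built on it is moot as well.

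The paper takes a genuinely different route that avoids projecting onto a single direction. It first establishes the variational characterization $\mu_P(K)=\mu_P^+(K)$ (Corollary \ref{cor:convex-4}), where $\mca{P}^+(K)$ consists of closed piecewise geodesics no translate of which fits in $\interior K$; this immediately yields monotonicity $\mu_P(K_1)\le\mu_P(K_2)$ for $K_1\subset K_2$ (Corollary \ref{cor:convex-4.5}), since $\mca{P}^+(K_2)\subset\mca{P}^+(K_1)$. Applying this to a largest inscribed ball $B\subset K$ and computing $\mu_P(B)=4r(K)$ explicitly (Lemma \ref{lem:Ghomi}) gives the inequality; the equality case is handled by the rigidity statement Lemma \ref{lem:convex-5}, which upgrades a length-minimizing element of $\mca{P}^+(B)$ to an actual billiard trajectory on $B$, hence a bouncing ball orbit, forcing $K$ into a slab of thickness $2r(K)$. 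If you want an elementary proof in the spirit of your projection idea, you would need Ghomi's original argument, which compares the trajectory with the inscribed ball using the normals at the bounce points (cf.\ Lemma \ref{lem:convex-2}: the outer normals at bounce times have $0$ in their convex hull), not with the width direction of $K$.
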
 

\begin{rem}
We only partially recover original results in \cite{ArOs} and \cite{Gh}. 
In \cite{ArOs}, the authors prove Theorem \ref{thm:BruMin} in a more general setting of Minkowski billiards, whereas 
we will discuss only Euclidean billiards. 
On the other hand, \cite{Gh} does not assume the smoothness of $\partial K$. 
\end{rem} 

\section{Approximating problem}
In this section, we study an approximating problem for the billiard problem, which was introduced in \cite{BG}. 
In Section 2.1, we fix the setting and state Proposition \ref{prop:Morsetheory}, which is a main result in this section. 
Section 2.2 is devoted to its proof. 

Throughout this and the next sections, $Q$ denotes a compact, connected Riemannian manifold with nonempty boundary. 
We abbreviate $\Lambda(\interior Q)$, $\Omega(\interior Q)$ as $\Lambda$, $\Omega$. 
These spaces have natural structures of smooth Hilbert manifolds. 
For any $\gamma_\Lambda \in \Lambda$ and $\gamma_\Omega \in \Omega$, tangent spaces at $\gamma_\Lambda$ and $\gamma_\Omega$ 
are described as 
\[
T_{\gamma_\Lambda} \Lambda = W^{1,2}([0,1], \gamma_\Lambda^* (TQ)), \qquad
T_{\gamma_\Omega} \Omega = W^{1,2}(S^1, \gamma_\Omega^*(TQ)).
\]

\subsection{Setting} 

We take and fix  $\rho \in C^\infty(\R_{\ge 0})$ such that: 
\begin{itemize}
\item $\rho(t)=t$ for any $0 \le t \le 1$. 
\item $0 \le \rho(t) \le 2$, $0 \le \rho'(t) \le 1$ for any $t \ge 0$.
\item $\rho(t) =2$ for any $t \ge 3$. 
\end{itemize}

We define $d \in C^0(Q)$ by $d(q):= \dist(q, \partial Q)$. 
Recall the notation $Q(\delta):= \{ q \in Q \mid d(q) \ge \delta \}$ in Section 1.2. 
When $\delta>0$ is sufficiently small, $d$ is of $C^\infty$ and satisfies $|\nabla d| \equiv 1$ on $Q \setminus Q(3\delta)$.
For such $\delta$, we define $h_\delta \in C^\infty(Q)$ and $U_\delta \in C^\infty(\interior Q)$ by 
\[
h_\delta(q):= \delta\rho(d(q)/\delta), \qquad U_\delta(q):= h_\delta(q)^{-2} - (2\delta)^{-2}. 
\]
In this and the next sections, we fix $\delta$ and abbreviate $h_\delta$, $U_\delta$ as $h$, $U$. 
The following lemma is easy to prove, and we will use it a few times. 

\begin{lem}\label{lem:U}
\begin{enumerate}
\item[(i):] Let $\nu$ be a smooth vector field on $Q$ such that $\nu \equiv - \nabla d$ on $Q \setminus Q(3\delta)$. 
Then $|\nabla U(q)| = \langle  \nabla U(q), \nu(q) \rangle$ for any $q \in \interior Q$. 
\item[(ii): ] There holds $\lim_{q \to \partial Q} U(q)/|\nabla U(q)| = 0$.
\end{enumerate}
\end{lem}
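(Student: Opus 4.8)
The plan is to reduce both statements to explicit computations near $\partial Q$, using that $h = \delta\rho(d/\delta)$ depends only on $d$ and that $|\nabla d| \equiv 1$ on $Q \setminus Q(3\delta)$. For (i), the key observation is that $U = h^{-2} - (2\delta)^{-2}$ is a function of $h$ alone, hence $\nabla U = (U \circ h^{-1})'(h)\cdot \nabla h$; since $h = \delta\rho(d/\delta)$ is in turn a function of $d$, we get $\nabla U = c(q)\,\nabla d$ for some scalar function $c$ on $Q \setminus Q(3\delta)$, and $\nabla U \equiv 0$ on $Q(3\delta)$ (where $h$ is constant equal to $2\delta$). First I would compute $c$: writing $\varphi(t) := \rho(t)^{-2}$, we have $U = \delta^{-2}\varphi(d/\delta) - (2\delta)^{-2}$, so $\nabla U = \delta^{-3}\varphi'(d/\delta)\,\nabla d$. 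Since $0 \le \rho' \le 1$ and $\rho \ge 0$, we have $\varphi'(t) = -2\rho'(t)\rho(t)^{-3} \le 0$, so $c = \delta^{-3}\varphi'(d/\delta) \le 0$ on $Q \setminus Q(3\delta)$. On $Q \setminus Q(3\delta)$ we are given $\nu = -\nabla d$ and $|\nabla d| = 1$, so $\langle \nabla U, \nu \rangle = \langle c\,\nabla d, -\nabla d\rangle = -c = |c| = |c|\,|\nabla d| = |\nabla U|$. On $Q(3\delta)$ both sides vanish since $\nabla U \equiv 0$. This proves (i).

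For (ii), the point is purely local near $\partial Q$, so it suffices to work on $Q \setminus Q(3\delta)$, and in fact on the region $\{d < \delta\}$ where $\rho(d/\delta) = d/\delta$, i.e. $h = d$. There $U = d^{-2} - (2\delta)^{-2}$ and, by the computation above with $\varphi(t) = t^{-2}$, $\nabla U = -2d^{-3}\,\nabla d$, so $|\nabla U| = 2d^{-3}$ (using $|\nabla d| = 1$). Hence
\[
\frac{U(q)}{|\nabla U(q)|} = \frac{d(q)^{-2} - (2\delta)^{-2}}{2\,d(q)^{-3}} = \frac{d(q)}{2} - \frac{d(q)^3}{2(2\delta)^2},
\]
which tends to $0$ as $q \to \partial Q$, since $d(q) \to 0$. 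This establishes (ii).

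Neither part presents a genuine obstacle; the only thing to be slightly careful about is the chain of substitutions $U = U(h)$, $h = h(d)$ and checking that the claimed identity in (i) holds trivially on the interior region $Q(3\delta)$ as well, where $\nabla U$ vanishes identically (so the vector field $\nu$ there is irrelevant). The estimate in (ii) is robust because only the leading behavior $U \sim d^{-2}$, $|\nabla U| \sim 2d^{-3}$ near $\partial Q$ matters, and their ratio is $O(d)$.
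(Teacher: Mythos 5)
Your proof is correct and is exactly the elementary chain-rule computation the paper has in mind (the paper omits the proof, declaring the lemma "easy to prove"). You correctly handle both regions — the explicit formula $\nabla U = -2\delta^{-3}\rho'(d/\delta)\rho(d/\delta)^{-3}\,\nabla d$ with nonpositive coefficient on $Q\setminus Q(3\delta)$, and $\nabla U\equiv 0$ on $Q(3\delta)$ where $h\equiv 2\delta$ — and the asymptotics $U\sim d^{-2}$, $|\nabla U|=2d^{-3}$ on $\{d<\delta\}$ give (ii) immediately.
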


First we consider the approximating problem for brake billiard trajectory. 
Suppose that 
$V \in C^\infty([0,1] \times \interior Q)$ satisfies the following property: 
\begin{itemize}
\item[V-(i):] There exists $\ep>0$ and a compact set $K \subset \interior Q$ such that $V(t,q)= \ep U(q)$ for any $t \in [0,1]$, $q \notin K$. 
\end{itemize}
We define $L^\Lambda_V \in C^\infty([0,1] \times T(\interior Q) )$ and 
$\mca{L}^\Lambda_V: \Lambda \to \R$  by 
\[
L^\Lambda_V(t,q,v):= \frac{|v|^2}{2} - V(t,q) \, (q \in \interior Q, v \in T_q Q), \quad 
\mca{L}^\Lambda_V(\gamma):= \int_0^1 L^\Lambda_V(t,\gamma, \dot{\gamma}) \, dt. 
\]
$\mca{L}^\Lambda_V$ is a $C^\infty$ functional on $\Lambda$, and its differential is computed as 
\[
d \mca{L}^\Lambda_V(\gamma)(\zeta) = \int_0^1 \langle \dot{\gamma}, \nabla_t \zeta  \rangle - dV_t(\zeta) \, dt \qquad( \zeta \in T_\gamma \Lambda), 
\]
where $\nabla_t$ denotes the Levi-Civita covariant derivative, and $V_t \in C^\infty(\interior Q)$ is defined by $V_t(q):=V(t,q)$. 
$\gamma \in \Lambda$ satisfies $d\mca{L}^\Lambda_V(\gamma)=0$ if and only if it is of class $C^\infty$ and satisfies
\begin{equation}\label{eq:EL}
\dot{\gamma}(0)= \dot{\gamma}(1)=0, \qquad \ddot{\gamma}(t) + \nabla V_t(\gamma(t)) \equiv 0. 
\end{equation}

For any $\gamma$ satisfying (\ref{eq:EL}), the Hessian of $\mca{L}^\Lambda_V$ at $\gamma$ is given by the following formula, 
where $R$ denotes the curvature tensor. 
\begin{equation}\label{eq:Hessian} 
d^2 \mca{L}^\Lambda_V(\gamma)(\eta,\zeta) =\int_0^1 \langle \nabla_t \eta, \nabla_t \zeta  \rangle - \langle R(\dot{\gamma},\eta)(\zeta), \dot{\gamma} \rangle - \langle \nabla_\eta \nabla V_t(\gamma), \zeta \rangle 
\, dt \quad
(\eta, \zeta \in T_\gamma \Lambda). 
\end{equation}
$\ind(\gamma)$ denotes the Morse index of $\gamma$, that is the number of negative eigenvalues of $d^2\mca{L}^\Lambda_V(\gamma)$. 
As is well-known, $\ind(\gamma)< \infty$ (see e.g. \cite{AbS} Proposition 3.1 (iii)). 

Next we consider the approximating problem for periodic billiard trajectory. 
Suppose that $V \in C^\infty(S^1 \times \interior Q)$ satisfies the following property: 
\begin{itemize}
\item[V-(ii):] There exists $\ep>0$ and a compact set $K \subset \interior Q$ such that $V(t,q)=\ep U(q)$ for any $t \in S^1$, $q \notin K$. 
\end{itemize} 
We define $L^\Omega_V \in C^\infty (S^1 \times T(\interior Q))$ and $\mca{L}^\Omega_V: \Omega \to \R$ by
\[
L^\Omega_V(t,q,v)= \frac{|v|^2}{2} - V(t,q), \qquad 
\mca{L}^\Omega_V(\gamma):= \int_{S^1} L^\Omega_V(t,\gamma, \dot{\gamma})\, dt.
\]
$\gamma \in \Omega$ satisfies $d\mca{L}^\Omega_V(\gamma)=0$ if and only if it is of class $C^\infty$ and 
satisfies $\ddot{\gamma}(t) + \nabla V_t(\gamma(t)) \equiv 0$. 
The goal of this section is to prove the following proposition: 

\begin{prop}\label{prop:Morsetheory}
Let $a<b$ be real numbers, and $j$ be a nonnegative integer.
\begin{enumerate}
\item[(i):]
For any $V \in C^\infty([0,1] \times \interior Q)$ which satisfies V-(i) and 
\[
H_j(\{ \mca{L}^\Lambda_V<b \}, \{ \mca{L}^\Lambda_V< a\}) \ne 0,
\]
there exists $\gamma \in \Lambda$ such that 
$d\mca{L}^\Lambda_V(\gamma)=0$ and $\mca{L}^\Lambda_V(\gamma) \in [a,b]$, 
$\ind(\gamma) \le j$. 
\item[(ii):]
For any $V \in C^\infty(S^1 \times \interior Q)$ which satisfies  V-(ii) and 
\[
H_j(\{ \mca{L}^\Omega_V<b\}, \{ \mca{L}^\Omega_V<a\}) \ne 0,
\]
there exists $\gamma \in \Omega$ such that
$d\mca{L}^\Omega_V(\gamma)=0$ and $\mca{L}^\Omega_V(\gamma) \in [a,b]$, 
$\ind(\gamma) \le j$. 
\end{enumerate} 
\end{prop}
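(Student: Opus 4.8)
\textbf{Proof proposal for Proposition \ref{prop:Morsetheory}.}

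The plan is to apply classical Morse theory (or rather its minimax consequence) to the functionals $\mca{L}^\Lambda_V$ and $\mca{L}^\Omega_V$ on the Hilbert manifolds $\Lambda = \Lambda(\interior Q)$ and $\Omega = \Omega(\interior Q)$. The two cases are entirely parallel, so I would set up the argument for a generic functional $\mca{L}$ of the form $\mca{L}_V$ on the relevant Hilbert manifold and treat both simultaneously. The heart of the matter is to verify the two hypotheses that make Morse theory applicable on a noncompact manifold: (a) the Palais--Smale condition, and (b) completeness of the negative gradient flow, i.e. that flow lines starting in a sublevel set cannot escape to the boundary $\partial Q$ in finite time. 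Granting these, a standard deformation argument produces a critical point in $[a,b]$, and a second, more delicate minimax/Morse-index estimate gives $\ind(\gamma)\le j$.

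First I would establish the Palais--Smale condition for $\mca{L}$ on sublevel sets. The potential term $V$ agrees with $\ep U$ outside a compact $K\subset\interior Q$, and $U(q)=h(q)^{-2}-(2\delta)^{-2}\to+\infty$ as $q\to\partial Q$; since $\mca{L}=\mca{E}-\int V$ with $\mca{E}\ge 0$, any sequence $(\gamma_n)$ with $\mca{L}(\gamma_n)$ bounded has $\mca{E}(\gamma_n)$ bounded (the $-\int V$ contribution is bounded below because $V$ is bounded below, being $\ge$ a constant on $\interior Q$), hence the $\gamma_n$ are equi-$W^{1,2}$ and, by Sobolev embedding, equi-continuous with uniformly bounded image. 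The only way Palais--Smale could fail is if the image approaches $\partial Q$; here I would use Lemma \ref{lem:U}(ii), which says $U/|\nabla U|\to 0$ near $\partial Q$, exactly as in \cite{BG}, to show that if some $\gamma_n(t_n)\to\partial Q$ then $\|d\mca{L}(\gamma_n)\|$ cannot go to zero --- testing $d\mca{L}(\gamma_n)$ against a vector field pointing inward (built from the $\nu$ of Lemma \ref{lem:U}(i)) forces a definite lower bound on the gradient norm. Once the image stays in a compact subset of $\interior Q$, the usual argument (the second differential is a compact perturbation of a positive-definite form, cf. the Hessian formula (\ref{eq:Hessian})) gives a convergent subsequence in $W^{1,2}$.

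Next I would verify that the negative gradient flow of $\mca{L}$ is positively complete on each sublevel set $\{\mca{L}<c\}$: the potential barrier $-\int V\le -\int\ep U$ together with the divergence of $U$ near $\partial Q$ means $\mca{L}\to+\infty$ on curves approaching $\partial Q$, so $\{\mca{L}\le c\}$ stays a definite distance from $\partial Q$ and the flow stays in a complete region; boundedness of $\mca{E}$ on sublevel sets plus the Palais--Smale property then gives long-time existence. With Palais--Smale and flow completeness in hand, the hypothesis $H_j(\{\mca{L}<b\},\{\mca{L}<a\})\ne 0$ feeds into standard minimax: choosing a nonzero relative class $\alpha\in H_j$ and setting $c:=\inf\{c'\mid \alpha\in\Image(H_j(\{\mca{L}<c'\},\{\mca{L}<a\})\to \cdots)\}$ produces a critical value $c\in[a,b]$. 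For the index bound I would invoke the refined minimax principle for functionals satisfying Palais--Smale (as in, e.g., the Morse-theoretic results used in \cite{BG} or in Chang's book): since $\alpha$ is represented by a $j$-cycle and the critical value is detected at filtration level $j$, there is a critical point $\gamma$ at level $c$ with $\ind(\gamma)\le j$ (the local homology $H_*(\{\mca{L}<c\}\cup\{\gamma\},\{\mca{L}<c\})$ must be nonzero in degree $\le j$, and this local homology is concentrated in degrees between $\ind(\gamma)$ and $\ind(\gamma)+\nul(\gamma)$; one handles the possibly-degenerate case either by a perturbation to a Morse function or by citing the Gromoll--Meyer splitting lemma, noting $\ind$ is still $\le j$).

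The main obstacle I expect is the interplay between noncompactness near $\partial Q$ and the index estimate. Verifying Palais--Smale "away from $\partial Q$" is routine, but ruling out escape to the boundary is exactly where the specific form of $U$ (and Lemma \ref{lem:U}) is essential, and one must be careful that the estimates are uniform on sublevel sets; this is the technical core inherited from \cite{BG}. The index bound is the second delicate point: a naive minimax only gives existence of a critical point in $[a,b]$, and extracting one with $\ind\le j$ requires the more careful homological argument above (tracking the filtration degree at which the class $\alpha$ first appears), together with care about degenerate critical points via a splitting lemma or a density/perturbation argument. I would organize Section 2.2 so that these two points --- the boundary-avoidance estimate and the homological index argument --- receive full detail, while the soft parts (smoothness of $\mca{L}_V$, the formulas for $d\mca{L}_V$ and $d^2\mca{L}_V$, the deformation lemma) are quoted or sketched.
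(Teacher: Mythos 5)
Your overall architecture (Palais--Smale condition, completeness, then a homological minimax with an index bound, handling degeneracy by perturbation or Gromoll--Meyer) is the same as the paper's, which verifies the axioms of Abbondandolo--Majer's abstract Morse theory and achieves nondegeneracy by a Weber-type perturbation of $V$. However, your proposal contains a genuine sign error about the potential term that invalidates two of your key analytic claims. Since $V\ge \ep U\to+\infty$ near $\partial Q$ and $\mca{L}_V=\mca{E}-\int_0^1 V$, the functional tends to $-\infty$, not $+\infty$, along curves of bounded energy approaching $\partial Q$. Consequently: (1) your claim that boundedness of $\mca{L}_V(\gamma_n)$ alone forces boundedness of $\mca{E}(\gamma_n)$ ``because $V$ is bounded below'' is backwards --- $V$ bounded below gives $\mca{E}=\mca{L}_V+\int V\ge\mca{L}_V-C$, a \emph{lower} bound on $\mca{E}$; an upper bound requires controlling $\int V$ from above, which is exactly the nontrivial content of the paper's Lemma \ref{lem:PS_boundary} and uses the full PS hypothesis ($\|\nabla\mca{L}_V(\gamma_n)\|$ small) via the vector field $\nu$ of Lemma \ref{lem:U}; and (2) your claim that ``$\mca{L}\to+\infty$ on curves approaching $\partial Q$, so $\{\mca{L}\le c\}$ stays a definite distance from $\partial Q$'' is false --- the sublevel sets $\{\mca{L}_V<c\}$ contain curves arbitrarily close to $\partial Q$ (short loops hugging the boundary have $\mca{L}_V$ very negative), so there is no potential barrier confining the negative gradient flow away from $\partial Q$, and your completeness argument collapses.

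The correct mechanism, which you should substitute, is the one in Lemmas \ref{lem:boundary} and \ref{lem:complete}: the region that stays away from $\partial Q$ (in the completion sense) is $\mca{L}_V^{-1}(J)$ for $J$ closed and \emph{bounded below}, because a Cauchy sequence there has bounded $\|\dot\gamma_j\|_{L^2}$, and if it approached $\partial Q$ then Lemma \ref{lem:boundary} would force $\int U(\gamma_j)\to\infty$, hence $\mca{L}_V(\gamma_j)\to-\infty$, contradicting the lower bound on $J$. This completeness of the ``slab'' $\{c_-\le\mca{L}_V\le c_+\}$, together with the PS condition (whose proof must run through the gradient-testing estimate you correctly gesture at, not through the false energy bound), is what makes the deformation/minimax machinery applicable. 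One further point you gloss over: the endpoints $a,b$ may themselves be critical values, and the paper needs the spectrum $\Spec(\mca{L}_V)$ to be closed and of measure zero (a finite-dimensional Sard argument applied to the shooting map) to reduce to the case of regular endpoint levels before running the homological index argument.
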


\subsection{Proof of Proposition \ref{prop:Morsetheory}}
We only prove (i), since (ii) can be proved by parallel arguments. 
In this subsection, we abbreviate $L^\Lambda_V$ and $\mca{L}^\Lambda_V$ as 
$L_V$, $\mca{L}_V$. 
As a first step, we need the following result: 

\begin{lem}\label{lem:boundary}
Let $(\gamma_j)_j$ be a sequence in $\Lambda$, such that 
$\lim_{j \to \infty} \dist(\gamma_j([0,1]), \partial Q) = 0$ and 
$\sup_j  \| \dot{\gamma}_j \|_{L^2} < \infty$. 
Then, there holds 
$\lim_{j \to \infty} \int_0^1 h(\gamma_j)^{-2} \, dt = \infty$. 
\end{lem}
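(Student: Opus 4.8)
The plan is to extract from the hypotheses a definite lower bound on how often, and how deeply, each $\gamma_j$ must dip into the collar neighborhood of $\partial Q$, and then to estimate $\int_0^1 h(\gamma_j)^{-2}\,dt$ from below on that portion of the interval. Write $E:=\sup_j\|\dot\gamma_j\|_{L^2}<\infty$ and $\ep_j:=\dist(\gamma_j([0,1]),\partial Q)$, so $\ep_j\to 0$. Recall that near $\partial Q$ we have $h(\gamma)=d(\gamma)$ once $d(\gamma)\le\delta$, where $d=\dist(\cdot,\partial Q)$ is smooth with $|\nabla d|\equiv 1$ on $Q\setminus Q(3\delta)$; in particular $d$ is $1$-Lipschitz everywhere. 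Fix a small threshold $r$ with $0<r<\delta$ (eventually we may let $r$ be a fixed constant, say $\delta/2$). For $j$ large enough that $\ep_j<r/2$, pick $t_j\in[0,1]$ with $d(\gamma_j(t_j))\le \ep_j+ (r/4)\le r/2$; this $t_j$ will be the center of the bad interval.

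First I would use the Lipschitz bound on $d$ together with the Cauchy–Schwarz estimate for $W^{1,2}$ paths: for any $t$,
\[
d(\gamma_j(t)) \le d(\gamma_j(t_j)) + \bigl| \dist_Q(\gamma_j(t),\gamma_j(t_j))\bigr| \le \frac{r}{2} + \int_{[t_j,t]} |\dot\gamma_j|\,ds \le \frac{r}{2} + E\,|t-t_j|^{1/2}.
\]
Hence, setting $\tau:= (r/(2E))^2$, for every $t$ with $|t-t_j|\le \tau$ we get $d(\gamma_j(t))\le r<\delta$, so $h(\gamma_j(t))=d(\gamma_j(t))\le r/2 + E|t-t_j|^{1/2} \le r$. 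Therefore on the interval $I_j:=[t_j-\tau,t_j+\tau]\cap[0,1]$, which has length at least $\tau$ (a fixed positive constant independent of $j$),
\[
\int_{I_j} h(\gamma_j)^{-2}\,dt \ge \int_{I_j} \bigl(\tfrac{r}{2}+E|t-t_j|^{1/2}\bigr)^{-2}\,dt.
\]
That last integral is a fixed finite number, which is not yet good enough, so the real point is that it must blow up: I would instead iterate/refine the choice of $r$. Namely, replace the fixed $r$ by $\ep_j^{1/2}$ (or any sequence tending to $0$ slower than $\ep_j$): choose $t_j$ with $d(\gamma_j(t_j))\le 2\ep_j$, set $\tau_j:=\ep_j/(4E^2)$ (say), and note that for $|t-t_j|\le\tau_j$ we have $d(\gamma_j(t))\le 2\ep_j+E\tau_j^{1/2}\le 2\ep_j + \sqrt{\ep_j}/2\le \sqrt{\ep_j}$ for $j$ large, hence $h(\gamma_j(t))\le\sqrt{\ep_j}$ on $I_j$ of length $\ge \tau_j$. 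Then
\[
\int_0^1 h(\gamma_j)^{-2}\,dt \ge |I_j|\cdot \ep_j^{-1} \ge \frac{1}{4E^2}\cdot \frac{\ep_j}{\ep_j} \cdot \ep_j^{-1}\cdot \ep_j = \cdots
\]
— this still only gives a constant, which shows the naive two-sided Lipschitz bound is too weak; the genuine mechanism is that the path cannot leave the collar instantaneously relative to how deep it is.

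So the correct and cleanest approach is the following sharper version. Let $\ep_j:=\dist(\gamma_j([0,1]),\partial Q)$ and fix $j$ large so $\ep_j<\delta$. For every $t\in[0,1]$ we have $d(\gamma_j(t))\ge\ep_j$ by definition, and also $d(\gamma_j(t))\le\ep_j+\delta$ on a subinterval — more importantly, pick $t_j$ with $d(\gamma_j(t_j))\le 2\ep_j$. For $|t-t_j|\le s$ the Cauchy–Schwarz bound gives $d(\gamma_j(t))\le 2\ep_j + E s^{1/2}$. Now integrate: writing $\sigma:=(s^{1/2})$,
\[
\int_{t_j-s}^{t_j+s} h(\gamma_j(t))^{-2}\,dt \ge \int_{t_j-s}^{t_j+s}\bigl(2\ep_j+E|t-t_j|^{1/2}\bigr)^{-2}\,dt = 2\int_0^{s}\bigl(2\ep_j+E u^{1/2}\bigr)^{-2}\,du,
\]
valid as long as $2\ep_j+Es^{1/2}\le\delta$, i.e. $s\le\bigl((\delta-2\ep_j)/E\bigr)^2$, which holds for all $s\le s_0:=(\delta/2E)^2$ once $j$ is large. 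Substituting $u=v^2$, the integral becomes $4\int_0^{\sqrt s}\frac{v\,dv}{(2\ep_j+Ev)^2}$, and a direct computation (partial fractions in $v$, or the substitution $w=2\ep_j+Ev$) shows this equals $\frac{4}{E^2}\bigl[\log\frac{2\ep_j+E\sqrt s}{2\ep_j} - \frac{E\sqrt s}{2\ep_j+E\sqrt s}\bigr]$. Taking $s=s_0$ fixed and letting $j\to\infty$, the logarithm $\log\bigl((2\ep_j+E\sqrt{s_0})/(2\ep_j)\bigr)\to\infty$ while the subtracted term stays bounded by $1$; hence $\int_0^1 h(\gamma_j)^{-2}\,dt\to\infty$, as claimed.

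The step I expect to be the main obstacle is making the collar estimate honest on a general Riemannian manifold: the inequality $d(\gamma_j(t))\le d(\gamma_j(t_j))+\int_{[t_j,t]}|\dot\gamma_j|$ relies on $d$ being $1$-Lipschitz (true, as a distance function) and on staying in the region $Q\setminus Q(3\delta)$ where $h=d$; one must check that the whole subinterval $I_j$ maps into $\{d<\delta\}$ before identifying $h$ with $d$ there, which is exactly why the threshold $2\ep_j+E\sqrt s<\delta$ is imposed. A secondary (routine) point is handling the boundary case where $I_j$ is truncated by an endpoint of $[0,1]$: since $|I_j|\ge s_0$ could fail, one instead notes $[0,1]$ has length $1$ and $t_j$ may be near an endpoint, so one should take the one-sided integral over $[t_j,t_j+s_0]\cap[0,1]$ or $[t_j-s_0,t_j]\cap[0,1]$, at least one of which has length $\ge\min(s_0,1/2)$; the same logarithmic divergence survives with a harmless factor of $1/2$. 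No compactness or Morse theory is needed; the estimate is purely one-dimensional once the Lipschitz/Cauchy–Schwarz bound on $d(\gamma_j(\cdot))$ is in place.
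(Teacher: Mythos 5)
Your final argument (the third paragraph) is correct and complete: choosing $t_j$ with $d(\gamma_j(t_j))\le 2\ep_j$, the Cauchy--Schwarz bound $d(\gamma_j(t))\le 2\ep_j+E|t-t_j|^{1/2}$ together with $h=d$ on $\{d\le\delta\}$ yields the logarithmic lower bound $\frac{4}{E^2}\log\bigl((2\ep_j+E\sqrt{s_0})/(2\ep_j)\bigr)+O(1)\to\infty$, and you correctly handle the collar threshold, the endpoint truncation, and (implicitly) the trivial case $E=0$. The paper itself offers no proof, deferring to Lemma 3.6 of Benci--Giannoni, and your computation is exactly the standard argument behind that reference; the only criticism is expository, namely that the first two paragraphs are abandoned attempts that you yourself discard and that should simply be deleted.
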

\begin{proof}
See Lemma 3.6 in \cite{BG}. 
\end{proof} 

For any $\gamma \in \Lambda$ and $\eta, \zeta \in T_\gamma \Lambda$, we define a Riemannian metric $\langle \cdot, \cdot \rangle_\Lambda$ on $\Lambda$ by 
\begin{equation}\label{eq:metric}
\langle \eta, \zeta \rangle_\Lambda:= \int_0^1 \langle \eta, \zeta \rangle  + \langle \nabla_t \eta , \nabla_t \zeta \rangle  \, dt, \qquad
\| \eta \|_\Lambda:= \langle \eta, \eta \rangle_\Lambda^{1/2}.
\end{equation} 
$\| \cdot \|_\Lambda$ defines a distance function $d_\Lambda$ on $\Lambda$ in the obvious way. 
This metric structure on $\Lambda = \Lambda(\interior Q)$ 
naturally extends to $\Lambda(Q)$, 
and this makes $\Lambda(Q)$ a complete metric space. 
(Notice that $\Lambda (Q)$ is \textit{not} a Hilbert manifold, even with boundary.)

\begin{lem}\label{lem:complete}
For any interval $J \subset \R$, which is closed and bounded from below, 
$(\mca{L}_V^{-1}(J), d_\Lambda)$ is a complete metric space. 
\end{lem}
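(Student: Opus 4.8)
The plan is to reduce completeness of $(\mca{L}_V^{-1}(J), d_\Lambda)$ to the completeness of $(\Lambda(Q), d_\Lambda)$ stated just above. So let $(\gamma_n)_n$ be a $d_\Lambda$-Cauchy sequence in $\mca{L}_V^{-1}(J)$. Since $\Lambda(Q)$ is a complete metric space, $\gamma_n$ converges in $\Lambda(Q)$ to some $\gamma \in \Lambda(Q)$; in particular $\gamma_n \to \gamma$ uniformly, and $\sup_n \|\dot{\gamma}_n\|_{L^2} < \infty$ (the latter because $d_\Lambda$-Cauchy sequences are $d_\Lambda$-bounded, and along any path in $\Lambda$ the quantity $\|\dot{\gamma}\|_{L^2}$ is $1$-Lipschitz with respect to $d_\Lambda$, by the definition \eqref{eq:metric} of $\langle\cdot,\cdot\rangle_\Lambda$). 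It then remains to show $\gamma \in \Lambda = \Lambda(\interior Q)$ and $\mca{L}_V(\gamma) \in J$.

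The core of the argument is to prove $\dist(\gamma([0,1]), \partial Q) > 0$. Suppose not. By uniform convergence, $\dist(\gamma_n([0,1]), \partial Q) \to 0$, so Lemma \ref{lem:boundary} (whose second hypothesis we just verified) gives $\int_0^1 h(\gamma_n)^{-2}\, dt \to \infty$. Let $K \subset \interior Q$ be the compact set from property V-(i) and set $A_n := \{ t \in [0,1] \mid \gamma_n(t) \in K\}$. On $[0,1] \setminus A_n$ one has $V(t, \gamma_n) = \ep U(\gamma_n) = \ep h(\gamma_n)^{-2} - \ep (2\delta)^{-2}$, whereas both $V$ on $[0,1] \times K$ and $h^{-2}$ on $K$ are bounded, since these sets are compact and $h > 0$ on $\interior Q$. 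Splitting $\int_0^1 V(t,\gamma_n)\, dt$ over $A_n$ and its complement, and using $\int_{[0,1]\setminus A_n} h(\gamma_n)^{-2}\, dt \ge \int_0^1 h(\gamma_n)^{-2}\, dt - \max_K h^{-2} \to \infty$, we conclude $\int_0^1 V(t,\gamma_n)\, dt \to +\infty$. On the other hand, $\mca{L}_V(\gamma_n) \in J$ with $c := \inf J > -\infty$ yields $\int_0^1 V(t,\gamma_n)\, dt = \int_0^1 \frac{|\dot{\gamma}_n|^2}{2}\, dt - \mca{L}_V(\gamma_n) \le \frac12 \sup_n \|\dot{\gamma}_n\|_{L^2}^2 - c < \infty$, a contradiction. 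Hence $\gamma \in \Lambda$.

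Once $\gamma \in \Lambda$, the rest is routine. Since the metric $d_\Lambda$ restricts on $\Lambda$ to the Riemannian distance inducing the Hilbert manifold topology of $\Lambda = \Lambda(\interior Q)$, the convergence $\gamma_n \to \gamma$ in $\Lambda(Q)$ implies $\gamma_n \to \gamma$ in $\Lambda$ for that topology, hence $\mca{L}_V(\gamma_n) \to \mca{L}_V(\gamma)$ by continuity of the smooth functional $\mca{L}_V$ on $\Lambda$. As $J$ is closed and $\mca{L}_V(\gamma_n) \in J$, we get $\mca{L}_V(\gamma) \in J$, i.e. $\gamma \in \mca{L}_V^{-1}(J)$, and $\gamma_n \to \gamma$ in $(\mca{L}_V^{-1}(J), d_\Lambda)$.

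The main obstacle is precisely the non-touching step of the second paragraph: one must extract, from the sole hypotheses that $(\gamma_n)$ is $d_\Lambda$-Cauchy and lies in $\mca{L}_V^{-1}(J)$, exactly the two inputs of Lemma \ref{lem:boundary} — a uniform $L^2$-bound on the velocities and vanishing distance to $\partial Q$ — and then balance the forced divergence of $\int_0^1 h(\gamma_n)^{-2}\, dt$ against the lower bound $\mca{L}_V(\gamma_n) \ge \inf J$. Bookkeeping near the compact set $K$ requires a little care but only produces bounded error terms, so it is harmless.
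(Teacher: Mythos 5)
Your proof is correct and follows essentially the same route as the paper: pass to the limit in the complete space $\Lambda(Q)$, and rule out the limit touching $\partial Q$ by combining the uniform $L^2$-bound on velocities with Lemma \ref{lem:boundary} and the lower bound $\mca{L}_V \ge \inf J$. You merely spell out two steps the paper leaves implicit (the reduction from $\int_0^1 h(\gamma_n)^{-2}\,dt \to \infty$ to $\mca{L}_V(\gamma_n) \to -\infty$ via V-(i), and the closedness of $J$ at the end), and both are handled correctly.
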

\begin{proof} 
Let $(\gamma_j)_j$ be a Cauchy sequence on $(\mca{L}_V^{-1}(J), d_\Lambda)$. 
There exists $\gamma_\infty \in \Lambda(Q)$ such that $\lim_{j \to \infty} d_\Lambda(\gamma_j, \gamma_\infty)=0$. 
It is enough to show that $\gamma_\infty([0,1]) \subset \interior Q$. 

Suppose that $\gamma_\infty([0,1])$ intersects $\partial Q$. 
Then, $\lim_{j \to \infty}  \dist(\gamma_j([0,1]), \partial Q)=0$. 
On the other hand, $\sup_j \| \dot{\gamma}_j \|_{L^2} < \infty$, since the convergence in $d_\Lambda$ implies a convergence in $W^{1,2}$-topology. 
Hence Lemma \ref{lem:boundary} implies $\lim_{j \to \infty} \mca{L}_V(\gamma_j) = -\infty$, contradicting the assumption that
$\mca{L}_V(\gamma_j) \in J$ for all $j$. 
\end{proof} 

Next we discuss the Palais-Smale (PS) condition for $\mca{L}_V$. 
For each $\gamma \in \Lambda$, we define $\nabla \mca{L}_V(\gamma) \in T_\gamma \Lambda$, the \textit{gradient vector} of $\mca{L}_V$ at $\gamma$ by 
\[
\langle \nabla \mca{L}_V(\gamma), \eta \rangle_\Lambda := d\mca{L}_V(\eta) \quad (\forall \eta \in T_\gamma \Lambda).
\]

\begin{defn}\label{defn:PS} 
Let $X$ be a (possibly infinite dimensional) Riemannian manifold and $f: X \to \R$ be a smooth function. 
A sequence $(x_j)_{j=1,2,\ldots}$ is a \textit{PS-sequence} of $f$, if $(f(x_j))_j$ is bounded and $\lim_{j \to \infty} \| \nabla f(x_j) \|=0$. 
We shall say that $f$ satisfies the \textit{PS-condition}, if any PS-sequence of $f$ contains a convergent subsequence. 
\end{defn} 

We are going to show that $\mca{L}_V$ satisfies the PS-condition. 
Our argument is based on the following result: 

\begin{lem}\label{lem:PS_closed}
Let $M$ be a closed Riemannian manifold, and suppose that $\Lambda(M)$ is equipped with a Riemannian metric 
in the same manner as (\ref{eq:metric}). 
For any $W \in C^\infty([0,1] \times M)$, 
\[
\mca{L}_W: \Lambda(M) \to \R; \qquad \gamma \mapsto \int_0^1 \frac{|\dot{\gamma}(t)|^2}{2} - W(t,\gamma(t)) \, dt
\]
satisfies the PS-condition. 
\end{lem}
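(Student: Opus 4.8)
The plan is to prove Lemma \ref{lem:PS_closed} by the classical argument for Lagrangian action functionals on loop/path spaces of a closed manifold, adapting it to the fixed-endpoint path space $\Lambda(M)=W^{1,2}([0,1],M)$ with the $W^{1,2}$-metric (\ref{eq:metric}). Let $(\gamma_j)_j$ be a PS-sequence: $|\mca{L}_W(\gamma_j)| \le C$ and $\|\nabla \mca{L}_W(\gamma_j)\|_\Lambda \to 0$. Since $M$ is compact, $W$ and all its derivatives are bounded, so from $\mca{L}_W(\gamma_j) \le C$ we immediately get a uniform bound $\int_0^1 |\dot\gamma_j|^2\,dt \le 2C + 2\sup|W| =: C'$. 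Thus $(\gamma_j)$ is bounded in $W^{1,2}$. By Rellich compactness (the inclusion $W^{1,2}([0,1],\R^N) \hookrightarrow C^0$ is compact, after isometrically embedding $M \subset \R^N$), after passing to a subsequence $\gamma_j$ converges uniformly to some continuous $\gamma_\infty: [0,1] \to M$ and $\dot\gamma_j \rightharpoonup \dot\gamma_\infty$ weakly in $L^2$; in particular $\gamma_\infty \in \Lambda(M)$ with the correct endpoints. The task is to upgrade this to strong $W^{1,2}$-convergence, i.e. $\|\dot\gamma_j - \dot\gamma_\infty\|_{L^2} \to 0$, which is equivalent to convergence in $d_\Lambda$ once uniform convergence is known.

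The standard device for the upgrade is to test the differential $d\mca{L}_W(\gamma_j)$ against the vector field measuring the discrepancy between $\gamma_j$ and $\gamma_\infty$. Concretely, for $j$ large, $\gamma_j$ and $\gamma_\infty$ are uniformly close, so there is a well-defined vector field $\xi_j \in T_{\gamma_j}\Lambda$ along $\gamma_j$ obtained by pulling back $\exp_{\gamma_j}^{-1}(\gamma_\infty)$ (equivalently, work in the ambient $\R^N$ and project $\gamma_\infty - \gamma_j$ onto $T_{\gamma_j}M$ pointwise); one checks $\xi_j(0)=\xi_j(1)=0$ and $\|\xi_j\|_\Lambda$ stays bounded, in fact $\xi_j \to 0$ in a suitable sense. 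Since $\|\nabla\mca{L}_W(\gamma_j)\|_\Lambda \to 0$, we have $d\mca{L}_W(\gamma_j)(\xi_j) = \langle \nabla\mca{L}_W(\gamma_j), \xi_j\rangle_\Lambda \to 0$. Writing out $d\mca{L}_W(\gamma_j)(\xi_j) = \int_0^1 \langle \dot\gamma_j, \nabla_t \xi_j\rangle - dW_t(\xi_j)\,dt$, the potential term $\int_0^1 dW_t(\xi_j)\,dt \to 0$ because $dW$ is bounded and $\xi_j \to 0$ uniformly. Hence $\int_0^1 \langle \dot\gamma_j, \nabla_t \xi_j\rangle\,dt \to 0$. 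A parallel computation, using the weak convergence $\dot\gamma_j \rightharpoonup \dot\gamma_\infty$ and uniform convergence $\gamma_j \to \gamma_\infty$ to control Christoffel-symbol terms, shows $\nabla_t \xi_j$ differs from $\dot\gamma_\infty - \dot\gamma_j$ by a term that is small in $L^2$ (the difference consists of quadratic-in-derivative corrections that vanish in the limit because $\gamma_j \to \gamma_\infty$ in $C^0$ and the $L^2$-norms of $\dot\gamma_j, \dot\gamma_\infty$ are bounded). Combining these, $\int_0^1 \langle \dot\gamma_j, \dot\gamma_\infty - \dot\gamma_j\rangle\,dt \to 0$, i.e. $\|\dot\gamma_j\|_{L^2}^2 \to \int_0^1 \langle \dot\gamma_j, \dot\gamma_\infty\rangle\,dt \to \|\dot\gamma_\infty\|_{L^2}^2$ by weak convergence. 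Norm convergence plus weak convergence in the Hilbert space $L^2$ yields strong convergence $\dot\gamma_j \to \dot\gamma_\infty$ in $L^2$, hence $\gamma_j \to \gamma_\infty$ in $W^{1,2}$, which is exactly convergence in $d_\Lambda$. Therefore $\mca{L}_W$ satisfies the PS-condition.

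I expect the main obstacle to be the careful bookkeeping of the covariant-derivative terms when passing from the abstract pairing $d\mca{L}_W(\gamma_j)(\xi_j)$ back to an honest $L^2$ inner product of $\dot\gamma_j$ with $\dot\gamma_\infty - \dot\gamma_j$: one must verify that the nonlinear corrections (coming from parallel transport / exponential coordinates, or equivalently from the second fundamental form and Christoffel symbols of $M \subset \R^N$) are genuinely negligible in $L^2$, using only the bounds at hand, namely $C^0$-convergence of $\gamma_j$ and uniform $L^2$-bounds on $\dot\gamma_j$. This is the one place where compactness of $M$ is essential (boundedness of curvature, the embedding constants, and the injectivity radius all enter), and where a naive estimate could fail; all the rest — the $W^{1,2}$-bound, Rellich, and the weak-to-strong upgrade in $L^2$ — is routine. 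Since the paper states it will follow \cite{BG}, one may alternatively simply invoke the corresponding Palais–Smale statement there; but the self-contained argument above is the one I would write out, and it generalizes verbatim to the loop-space version needed for part (ii) of Proposition \ref{prop:Morsetheory}, replacing $[0,1]$ by $S^1$ and dropping the endpoint constraints.
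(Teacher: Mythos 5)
Your argument is correct, and it is the standard Palais--Smale proof for Lagrangian action functionals on a closed manifold. The paper does not write this proof out at all: it simply invokes Proposition 3.3 of \cite{AbS} (see also \cite{Ben}), which establishes the statement by essentially the scheme you describe --- coercivity of the action gives a uniform $W^{1,2}$ bound, Rellich/Arzel\`a--Ascoli gives $C^0$-convergence and weak $W^{1,2}$-convergence of a subsequence, and pairing $d\mca{L}_W(\gamma_j)$ with $\xi_j=\exp_{\gamma_j}^{-1}(\gamma_\infty)$ upgrades weak to strong convergence via the Hilbert-space fact that weak convergence plus norm convergence implies strong convergence. So your write-up is a legitimate self-contained substitute for the citation. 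One small correction: in this paper $\Lambda(M)=W^{1,2}([0,1],M)$ is the \emph{free} path space, with no endpoint constraints, so describing it as a fixed-endpoint space and asserting $\xi_j(0)=\xi_j(1)=0$ is a slip --- a harmless one, since $T_{\gamma_j}\Lambda$ is all of $W^{1,2}([0,1],\gamma_j^*TM)$ and no boundary condition on the test field is needed. The correction terms you flag in passing from $\nabla_t\xi_j$ to $\dot\gamma_\infty-\dot\gamma_j$ are pointwise bounded by $C\,\|\xi_j\|_{C^0}\bigl(|\dot\gamma_j|+|\dot\gamma_\infty|\bigr)$ with $C$ depending only on the geometry of the compact manifold $M$, hence tend to $0$ in $L^2$ by the $C^0$-convergence and the uniform $L^2$ bounds; so the one step you single out as delicate does go through exactly as you anticipate.
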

\begin{proof}
The claim follows from Proposition 3.3 in \cite{AbS} (see also \cite{Ben}). 
\end{proof} 

Since our base manifold $\interior Q$ is open and $V$ diverges to $\infty$ near the boundary, we cannot apply Lemma \ref{lem:PS_closed} directly.
For this reason, we need the following lemma: 

\begin{lem}\label{lem:PS_boundary}
If $(\gamma_j)_j$ is a PS-sequence of $\mca{L}_V$, 
then $\inf_j \dist(\gamma_j([0,1]), \partial Q) >0$. 
\end{lem}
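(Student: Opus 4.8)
The plan is to argue by contradiction: suppose $(\gamma_j)_j$ is a PS-sequence of $\mca{L}_V$ with $\liminf_j \dist(\gamma_j([0,1]), \partial Q) = 0$. By passing to a subsequence we may assume $\dist(\gamma_j([0,1]), \partial Q) \to 0$. Since $(\mca{L}_V(\gamma_j))_j$ is bounded and $U \ge 0$, we first need a uniform bound $\sup_j \|\dot\gamma_j\|_{L^2} < \infty$; this follows because $\mca{L}_V(\gamma_j) = \mca{E}(\gamma_j) - \int_0^1 V(t,\gamma_j)\,dt$, and outside the compact set $K$ of property V-(i) we have $V = \ep U \ge 0$, so $\mca{E}(\gamma_j) \le \mca{L}_V(\gamma_j) + \sup_{[0,1]\times K}|V|$, which is bounded. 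Then Lemma \ref{lem:boundary} gives $\int_0^1 h(\gamma_j)^{-2}\,dt \to \infty$, i.e. $\int_0^1 U(\gamma_j)\,dt \to \infty$ (recall $U = h^{-2} - (2\delta)^{-2}$).

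The heart of the argument is to extract from the smallness of $\|\nabla\mca{L}_V(\gamma_j)\|_\Lambda$ a contradiction with this blow-up of the potential term. The idea is to test $d\mca{L}_V(\gamma_j)$ against a cleverly chosen variation $\zeta_j \in T_{\gamma_j}\Lambda$ that points "inward", away from $\partial Q$, along the region where $\gamma_j$ is close to the boundary. Concretely, using the vector field $\nu$ of Lemma \ref{lem:U}(i) (with $\nu \equiv -\nabla d$ near $\partial Q$, so $\nu$ points inward), I would set $\zeta_j(t) := \chi(\gamma_j(t))\,\nu(\gamma_j(t))$ for a suitable cutoff $\chi$ supported near $\partial Q$, or more simply take $\zeta_j := \nu(\gamma_j)$ composed with a function of $d(\gamma_j)$. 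Since $V = \ep U$ near $\partial Q$ and $|\nabla U| = \langle \nabla U, \nu\rangle$ there by Lemma \ref{lem:U}(i), the term $-\int_0^1 dV_t(\zeta_j)\,dt$ becomes (up to the cutoff) $\ep\int_0^1 |\nabla U(\gamma_j)|\,\langle\text{cutoff}\rangle\,dt$, which should dominate; meanwhile Lemma \ref{lem:U}(ii) ensures $U/|\nabla U| \to 0$, so $\int U(\gamma_j)\,dt \to \infty$ forces $\int |\nabla U(\gamma_j)|\,dt \to \infty$ as well. The kinetic term $\int_0^1 \langle\dot\gamma_j, \nabla_t\zeta_j\rangle\,dt$ must then be controlled: here one uses $\nabla_t\zeta_j = (\nabla_{\dot\gamma_j}\nu)(\gamma_j) + (\text{cutoff derivative terms})$, and bounds it in terms of $\|\dot\gamma_j\|_{L^2}$ (uniformly bounded) times quantities controlled by the integral of the cutoff. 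One also needs $\|\zeta_j\|_\Lambda$ to not grow too fast relative to $\int|\nabla U(\gamma_j)|\,dt$, so that $|d\mca{L}_V(\gamma_j)(\zeta_j)| \le \|\nabla\mca{L}_V(\gamma_j)\|_\Lambda \|\zeta_j\|_\Lambda \to 0$ contradicts the lower bound $d\mca{L}_V(\gamma_j)(\zeta_j) \gtrsim \ep\int|\nabla U(\gamma_j)|\,dt - C\|\dot\gamma_j\|_{L^2}\cdot(\ldots) \to \infty$.

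The main obstacle is the delicate choice of the test variation $\zeta_j$ and the bookkeeping needed to make the potential term genuinely dominate both the kinetic cross-term and the norm $\|\zeta_j\|_\Lambda$: one must normalize $\zeta_j$ so that $\|\zeta_j\|_\Lambda$ is comparable to, say, $1 + \|\dot\gamma_j\|_{L^2}$ while $d\mca{L}_V(\gamma_j)(\zeta_j)$ still blows up. A natural fix is to use a nonlinear cutoff $\beta(d(\gamma_j(t)))$ with $\beta$ vanishing away from $\partial Q$ and $\beta$ essentially constant (equal to $1$) on the bounce region; then $\nabla_t\zeta_j$ picks up a factor $\beta'(d(\gamma_j))\langle\nabla d(\gamma_j),\dot\gamma_j\rangle$ which is supported only on the transition zone and bounded there, keeping $\|\zeta_j\|_\Lambda$ controlled. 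This is exactly the type of estimate carried out in \cite{BG} (cf. the proof of the corresponding Palais–Smale lemma there), and I would follow that scheme, spelling out the Riemannian-geometry terms ($\nabla_{\dot\gamma_j}\nu$, curvature does not enter at first order) explicitly for the reader's convenience.
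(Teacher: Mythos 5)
Your first step is where the proof breaks down. You claim that $\sup_j \|\dot\gamma_j\|_{L^2} < \infty$ follows from the boundedness of $\mca{L}_V(\gamma_j)$ alone, via the inequality $\mca{E}(\gamma_j) \le \mca{L}_V(\gamma_j) + \sup_{[0,1]\times K}|V|$. That inequality has the wrong sign: from $\mca{E}(\gamma_j) = \mca{L}_V(\gamma_j) + \int_0^1 V(t,\gamma_j)\,dt$ and the fact that $V$ is bounded \emph{below} (being equal to $\ep U \ge 0$ outside a compact set), you only obtain a \emph{lower} bound on $\mca{E}(\gamma_j)$. Since $V = \ep U \to +\infty$ near $\partial Q$, the term $\int_0^1 V(t,\gamma_j)\,dt$ is not bounded above a priori; the whole difficulty of the lemma is precisely that $\mca{E}(\gamma_j)$ and $\int_0^1 \ep U(\gamma_j)\,dt$ could both blow up while their difference $\mca{L}_{\ep U}(\gamma_j)$ stays bounded. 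Once this step is removed, your contradiction scheme cannot get started: you need $\sup_j\|\dot\gamma_j\|_{L^2}<\infty$ in order to invoke Lemma \ref{lem:boundary} and conclude $\int_0^1 U(\gamma_j)\,dt \to \infty$ in the first place.

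The ingredients in your second paragraph are the right ones, but they must be used to \emph{prove} the energy bound rather than deployed after assuming it. The paper's proof is not by contradiction: it tests $d\mca{L}_{\ep U}(\gamma_j)$ against the globally defined field $\nu(\gamma_j)$ (no cutoff is needed, since $\nu$ is smooth on all of the compact manifold $Q$ and one has $\|\nu(\gamma_j)\|_\Lambda \le C(1+\|\dot\gamma_j\|_{L^2})$ automatically), which together with the boundedness of $\|\nabla\mca{L}_{\ep U}(\gamma_j)\|_\Lambda$ yields $\int_0^1 \ep|\nabla U(\gamma_j)|\,dt \le M_0(1+\|\dot\gamma_j\|_{L^2}^2)$. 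Then Lemma \ref{lem:U}(ii), applied with the specific constant $U \le |\nabla U|/4M_0 + M_1$, gives $\int_0^1 \ep U(\gamma_j)\,dt \le \tfrac14(1+\|\dot\gamma_j\|_{L^2}^2) + \ep M_1$, and the identity $\tfrac12\|\dot\gamma_j\|_{L^2}^2 = \mca{L}_{\ep U}(\gamma_j) + \int_0^1 \ep U(\gamma_j)\,dt$ allows one to absorb the potential term and conclude that both $\int_0^1 \ep U(\gamma_j)\,dt$ and $\|\dot\gamma_j\|_{L^2}$ are uniformly bounded; only then is Lemma \ref{lem:boundary} applied, in contrapositive form, to obtain $\inf_j \dist(\gamma_j([0,1]),\partial Q) > 0$. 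The choice of the coefficient $1/4M_0$ is exactly what makes the absorption close, and this bootstrapping is the content your proposal is missing.
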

\begin{proof} 
Since $V$ satisfies V-(i), there exists $\ep>0$ such that $V(t,q) - \ep U(q)$ is compactly supported. 
Hence $\mca{L}_{\ep U}(\gamma_j)$, $\|\nabla \mca{L}_{\ep U}(\gamma_j)\|_\Lambda$ are both bounded on $j$. 
Let us take $\nu$ as in Lemma \ref{lem:U} (i), then there holds 
\[
\int_0^1 |\ep \nabla U(\gamma_j)| \, dt 
=\int_0^1 \langle \ep \nabla U(\gamma_j), \nu(\gamma_j) \rangle \, dt 
= - \langle \nabla \mca{L}_{\ep U}(\gamma_j), \nu(\gamma_j) \rangle_\Lambda  + \int_0^1 \langle \dot{\gamma}_j, \nabla_t(\nu(\gamma_j)) \rangle \, dt.
\]
We can bound RHS using the following obvious inequalities:
\[
\| \nu(\gamma_j) \|_{L^2} \le \max_{q \in Q} |\nu(q)|, \qquad
\| \nabla_t (\nu(\gamma_j)) \|_{L^2} \le \max_{q \in Q} |\nabla \nu (q)| \cdot \| \dot{\gamma}_j \|_{L^2}.
\]
Thus there exists a constant $M_0>0$, which is independent on $j$, such that 
\[
\int_0^1 |\ep \nabla U(\gamma_j)|\, dt  \le M_0 (1+ \| \dot{\gamma}_j \|_{L^2}^2).
\]
By Lemma \ref{lem:U} (ii), there exists $M_1>0$ such that
$U(q) \le  | \nabla U(q) |/4M_0 + M_1$ for any $q \in \interior Q$. Thus
\begin{align*}
\int_0^1 \ep U(\gamma_j) \, dt 
&\le \frac{1}{4M_0} \int_0^1 |\ep \nabla U(\gamma_j)| \, dt + \ep M_1
\le \frac{1+ \| \dot{\gamma}_j \|_{L^2}^2}{4}  + \ep M_1 \\
&= \frac{1}{2} \biggl( \mca{L}_{\ep U}(\gamma_j) + \int_0^1 \ep U(\gamma_j) \, dt \biggr) + \frac{1}{4} + \ep M_1. 
\end{align*}
Therefore, we obtain 
\[
\frac{1}{2} \int_0^1 \ep U(\gamma_j) \, dt \le
\frac{1}{2} \mca{L}_{\ep U}(\gamma_j) + \frac{1}{4} + \ep M_1.
\]
Since $\mca{L}_{\ep U}(\gamma_j)$ is bounded on $j$, we obtain: $\sup_j \int_0^1 \ep U(\gamma_j) \, dt < \infty$, $\sup_j \| \dot{\gamma}_j \|_{L^2}  < \infty$. 
Since $U(q)= h(q)^{-2} - (2\delta)^{-2}$, Lemma \ref{lem:boundary} implies $\inf_j \dist(\gamma_j([0,1]), \partial Q)>0$. 
\end{proof} 

\begin{lem}\label{lem:PS}
$\mca{L}_V$ satisfies the PS-condition. 
\end{lem}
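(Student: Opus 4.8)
The plan is to deduce the PS-condition for $\mca{L}_V$ from the PS-condition on a closed manifold (Lemma \ref{lem:PS_closed}), using Lemma \ref{lem:PS_boundary} to confine any PS-sequence to a compact part of $\interior Q$. First I would embed $\interior Q$ into a suitable closed Riemannian manifold $M$; concretely, take the double $M$ of $Q$ along $\partial Q$ (equipped with any smooth Riemannian metric extending the given one on a neighborhood of the relevant compact region), so that $Q \subset M$ isometrically on the interior. I would then choose $W \in C^\infty([0,1]\times M)$ so that $W = V$ on $[0,1] \times K'$ for a fixed large compact set $K' \subset \interior Q$ — say $K' \supset K$ together with a uniform neighborhood of the region where all the curves in the PS-sequence live — using a cutoff function; the precise values of $W$ outside $K'$ are irrelevant.

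Now let $(\gamma_j)_j$ be a PS-sequence of $\mca{L}_V$ in $\Lambda = \Lambda(\interior Q)$. By Lemma \ref{lem:PS_boundary}, there is $\sigma>0$ with $\dist(\gamma_j([0,1]),\partial Q)\ge\sigma$ for all $j$, i.e. $\gamma_j([0,1]) \subset Q(\sigma) =: K_0$, a compact subset of $\interior Q$. Choosing $K'$ above to contain a neighborhood of $K_0$, the curves $\gamma_j$ all take values in the region where $W$ and $V$ agree, so along $\gamma_j$ one has $\mca{L}_W(\gamma_j) = \mca{L}_V(\gamma_j)$, and moreover the Riemannian metrics on $\Lambda(\interior Q)$ and on $\Lambda(M)$ coincide near $\gamma_j$ (both are defined by the formula \eqref{eq:metric} with the same ambient metric on $K_0$), hence $\nabla\mca{L}_W(\gamma_j) = \nabla\mca{L}_V(\gamma_j)$ as tangent vectors and their $\|\cdot\|_\Lambda$-norms agree. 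Therefore $(\gamma_j)_j$ is a PS-sequence of $\mca{L}_W: \Lambda(M) \to \R$.

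By Lemma \ref{lem:PS_closed}, after passing to a subsequence, $\gamma_j$ converges in $\Lambda(M)$ to some $\gamma_\infty \in \Lambda(M)$. It remains to check $\gamma_\infty \in \Lambda = \Lambda(\interior Q)$ and that the convergence holds in $\Lambda$. Since $W^{1,2}$-convergence on $[0,1]$ into a closed manifold implies uniform ($C^0$) convergence, and each $\gamma_j$ has image in the closed set $Q(\sigma)$, the limit $\gamma_\infty$ also has image in $Q(\sigma) \subset \interior Q$; thus $\gamma_\infty \in \Lambda$. Finally, because the Hilbert-manifold structures and metrics of $\Lambda(\interior Q)$ and $\Lambda(M)$ agree on the open neighborhood $\{\gamma : \gamma([0,1]) \subset \interior Q\}$ of $\gamma_\infty$, the $\Lambda(M)$-convergence of $\gamma_j \to \gamma_\infty$ is a convergence in $\Lambda$. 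This proves the PS-condition.

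The main obstacle — or rather the point requiring care — is the bookkeeping that keeps the two functionals and the two gradient flows genuinely identified: one must fix the compact confinement set $K_0$ from Lemma \ref{lem:PS_boundary} \emph{before} choosing the cutoff defining $W$, and verify that ``agreement of the ambient data on a neighborhood of $K_0$'' really does force $\mca{L}_V = \mca{L}_W$, $\nabla\mca{L}_V = \nabla\mca{L}_W$, and $\|\nabla\mca{L}_V\|_\Lambda = \|\nabla\mca{L}_W\|_\Lambda$ along the whole sequence. Once that identification is in place, everything else is an immediate appeal to Lemmas \ref{lem:PS_boundary} and \ref{lem:PS_closed} together with the elementary fact that $W^{1,2} \hookrightarrow C^0$ in one dimension.
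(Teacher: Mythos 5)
Your proof is correct and follows essentially the same route as the paper: use Lemma \ref{lem:PS_boundary} to confine the PS-sequence to a compact subset of $\interior Q$, isometrically embed that subset into a closed manifold $M$ with a potential $W$ agreeing with $V$ there, and invoke Lemma \ref{lem:PS_closed}. The extra bookkeeping you supply (agreement of the gradients and of the loop-space metrics, and the $W^{1,2}\hookrightarrow C^0$ argument keeping the limit in $\interior Q$) is exactly the content the paper compresses into ``it is easy to show.''
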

\begin{proof}
Let $(\gamma_j)_j$ be a PS-sequence of $\mca{L}_V$. 
By Lemma \ref{lem:PS_boundary}, there exists a compact submanifold $Q' \subset \interior Q$, such that 
$\gamma_j([0,1]) \subset Q'$ for all $j$. 

It is easy to show that there exists a closed Riemannian manifold $M$, an isometric embedding $e: Q'  \to M$ and 
$W \in C^\infty([0,1] \times M)$, such that $V(t,q) = W(t,e(q))$ for all $t \in [0,1]$, $q \in Q'$. 
Then $\{e(\gamma_j)\}_j$ is a PS-sequence of $\mca{L}_W$, hence by Lemma \ref{lem:PS_closed}, it has a convergent subsequence. 
Thus $(\gamma_j)_j$ also has a convergent subsequence. 
\end{proof} 

We prove another consequence of Lemma \ref{lem:PS_boundary}. 
We define \textit{spectrum} of $\mca{L}_V$ as: 
\[
\Spec(\mca{L}_V):=\{ \mca{L}_V(\gamma) \mid d \mca{L}_V(\gamma)=0 \} \subset \R.
\]

\begin{lem}\label{lem:sard} 
$\Spec(\mca{L}_V)$ is closed in $\R$, and has a zero measure. 
\end{lem}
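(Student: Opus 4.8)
The plan is to prove the two assertions separately: closedness from the Palais--Smale condition, and the measure-zero property by reducing to a finite-dimensional Sard argument.

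First I would dispatch closedness. Given $c_k \in \Spec(\mca{L}_V)$ with $c_k \to c$, choose $\gamma_k \in \Lambda$ with $d\mca{L}_V(\gamma_k)=0$ and $\mca{L}_V(\gamma_k)=c_k$. Since $(c_k)_k$ is bounded and $\nabla\mca{L}_V(\gamma_k)=0$ for all $k$, the sequence $(\gamma_k)_k$ is a PS-sequence of $\mca{L}_V$, so by Lemma \ref{lem:PS} a subsequence converges to some $\gamma_\infty \in \Lambda$; by continuity of $\mca{L}_V$ and of its gradient, $d\mca{L}_V(\gamma_\infty)=0$ and $\mca{L}_V(\gamma_\infty)=c$, so $c \in \Spec(\mca{L}_V)$.

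For the measure-zero property I would reduce to finite dimensions. Every $\gamma$ with $d\mca{L}_V(\gamma)=0$ is a $C^\infty$ solution of (\ref{eq:EL}), hence is uniquely determined by $(\gamma(0),\dot\gamma(0))=(q,0)$ with $q \in \interior Q$. Let $D \subset \interior Q$ be the set of $q$ for which the solution $\gamma_q$ of $\ddot\gamma+\nabla V_t(\gamma)=0$ with $\gamma(0)=q$, $\dot\gamma(0)=0$ exists on $[0,1]$ and has image in $\interior Q$; confinement to the open set $\interior Q$ for time $1$ is an open condition, so $D$ is open, while $\gamma_q$ and $\dot\gamma_q$ depend smoothly on $(q,t)$, so $F(q):=\mca{L}_V(\gamma_q)$ defines a smooth function $D \to \R$. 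Setting $\mca{N}:=\{ q \in D \mid \dot\gamma_q(1)=0\}$, the characterization of critical points shows that $\gamma \in \Lambda$ satisfies $d\mca{L}_V(\gamma)=0$ precisely when $\gamma=\gamma_q$ for some $q \in \mca{N}$; hence $\Spec(\mca{L}_V)=F(\mca{N})$. The crucial observation is that $\mca{N}$ lies in the critical set of $F$: writing $\zeta_w:=\partial_q\gamma_q(w)$ for the variation field along $\gamma_q$ associated with a tangent vector $w$ at $q$, differentiating $F$ under the integral and integrating by parts, and using $\ddot\gamma_q+\nabla V_t(\gamma_q)=0$ together with $\dot\gamma_q(0)=0$, I would obtain
\[
dF(q)(w)=d\mca{L}_V(\gamma_q)(\zeta_w)=\langle \dot\gamma_q(1),\zeta_w(1)\rangle ,
\]
so $dF(q)=0$ whenever $q \in \mca{N}$. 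Since $F$ is $C^\infty$ on the manifold $D$ of dimension $n=\dim Q$, Sard's theorem shows the critical values of $F$ form a Lebesgue-null set in $\R$, and therefore $\Spec(\mca{L}_V)=F(\mca{N})$ has measure zero.

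I expect the main technical point to be the finite-dimensional reduction together with the boundary-term bookkeeping in the integration by parts: one has to check that $D$ is open and $q\mapsto(\gamma_q,\dot\gamma_q)$ is smooth (standard ODE facts, given that staying inside $\interior Q$ is an open condition), and that the boundary contributions collapse to $\langle\dot\gamma_q(1),\zeta_w(1)\rangle$. No deep difficulty is anticipated: Sard's theorem is invoked here only for a real-valued function on a finite-dimensional manifold, where $C^\infty$ regularity is far more than is needed.
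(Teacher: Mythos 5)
Your proof is correct and follows essentially the same route as the paper: closedness via the PS-condition, and the measure-zero statement via the finite-dimensional reduction $q \mapsto \mca{L}_V(\gamma_q)$ for the solution $\gamma_q$ with $\gamma_q(0)=q$, $\dot\gamma_q(0)=0$, followed by Sard's theorem (the paper attributes this reduction to an argument of Schwarz). Your version merely fills in details the paper leaves implicit, namely the openness of the domain of definition and the computation $dF(q)(w)=\langle\dot\gamma_q(1),\zeta_w(1)\rangle$ showing that $\Spec(\mca{L}_V)$ consists of critical values.
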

\begin{proof}
Closedness is immediate since $\mca{L}_V$ satisfies the PS-condition.
To show that $\Spec(\mca{L}_V)$ has a zero measure, we modify the arguments in \cite{Schwarz} pp.436.

For each $x \in \interior Q$, we define $\gamma_x: [0,1] \to \interior Q$ by $\gamma_x(0)=x$, $\dot{\gamma}_x(0)=0$ and $\ddot{\gamma}_x(t) + \nabla V_t(\gamma_x(t)) \equiv 0$.  
Then, $f(x):=\mca{L}_V(\gamma_x)$ is a $C^\infty$ function on $\interior Q$, and $\Spec(\mca{L}_V)$ is contained in the set of critical values of $f$. 
Hence our claim follows from the Sard theorem for finite dimensional manifolds. 
\end{proof} 
\begin{rem}
The above argument does not apply directly when we replace $\Lambda$ with $\Omega$. 
In other words, it is nontrivial to show that $\Spec(\mca{L}^\Omega_V)$ has a zero measure for $V \in C^\infty(S^1 \times TQ)$. 
One way to prove it is to apply Lemma 3.8 in \cite{Schwarz} directly to a Hamiltonian $H \in C^\infty(S^1 \times T^*Q)$, which is the Legendre transform of $V$. 
\end{rem} 

The following lemma is a key step in the proof. 

\begin{lem}\label{lem:AbM}
Let $c_-<c_+$ be real numbers such that $c_\pm \notin \Spec(\mca{L}_V)$. 
We set 
\[
\mca{C}_{(c_-,c_+)}:= \{ \gamma \in \Lambda \mid \mca{L}_V(\gamma) \in (c_-, c_+),\, d\mca{L}_V(\gamma)=0 \}. 
\]
If $H_j(\{\mca{L}_V<c_+ \}, \{\mca{L}_V<c_- \}) \ne 0$ and all elements of $\mca{C}_{(c_-,c_+)}$ are nondegenerate critical points of $\mca{L}_V$, 
there exists $\gamma \in \mca{C}_{(c_-, c_+)}$ such that $\ind(\gamma) = j$. 
\end{lem}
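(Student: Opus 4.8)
The plan is to use classical Morse theory for the functional $\mca{L}_V$ on the Hilbert manifold $\Lambda = \Lambda(\interior Q)$, exploiting the fact that the nonvanishing relative homology forces a critical point in the window, and then controlling its index via the standard fact that below a nondegenerate critical point the sublevel sets change by attaching a cell of dimension equal to the Morse index. First I would record the preliminary ingredients already established: $\mca{L}_V$ satisfies the PS-condition (Lemma \ref{lem:PS}), each sublevel set intersected with a closed level interval is complete (Lemma \ref{lem:complete}), and PS-sequences stay in a fixed compact subset of $\interior Q$ (Lemma \ref{lem:PS_boundary}). Together these let me run the deformation lemma: if there are no critical values in a closed subinterval $[c_-,c_+]$ avoiding $\Spec(\mca{L}_V)$, then $\{\mca{L}_V<c_+\}$ deformation-retracts onto $\{\mca{L}_V<c_-\}$ — this is where completeness of $\mca{L}_V^{-1}(J)$ and the PS-condition are used to integrate the negative gradient flow for a finite time and push everything below $c_-$. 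Such a retraction would force $H_j(\{\mca{L}_V<c_+\},\{\mca{L}_V<c_-\})=0$, contradicting the hypothesis; hence $\mca{C}_{(c_-,c_+)}\ne\emptyset$.

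Next I would upgrade this from mere existence to the index statement. Since $c_\pm \notin \Spec(\mca{L}_V)$ and $\Spec(\mca{L}_V)$ is closed with measure zero (Lemma \ref{lem:sard}), and since PS holds so critical values are isolated in $[c_-,c_+]$ with only finitely many of them, I can list the critical values $c_- < d_1 < \dots < d_m < c_+$ in the window. All critical points in $\mca{C}_{(c_-,c_+)}$ are assumed nondegenerate, so by the standard Morse-theoretic local analysis (e.g. via the finite-dimensional reduction near each critical point, or directly using the Morse lemma on the Hilbert manifold as in Chang's or Abbondandolo–Majer's framework), passing each critical value $d_k$ changes the homotopy type of the sublevel set by simultaneously attaching, for each critical point $\gamma$ with $\mca{L}_V(\gamma)=d_k$, a cell of dimension $\ind(\gamma)$ (finiteness of $\ind$ was noted in the excerpt). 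Consequently $H_*(\{\mca{L}_V<c_+\},\{\mca{L}_V<c_-\})$ is computed by the Morse complex in the window, and in particular
\[
H_j(\{\mca{L}_V<c_+\},\{\mca{L}_V<c_-\}) \ne 0
\]
can hold only if some critical point $\gamma\in\mca{C}_{(c_-,c_+)}$ has $\ind(\gamma)=j$: a window containing only critical points of index $\ne j$ would have vanishing $j$-th relative homology (the relative homology injects into or is built from the direct sum of $H_*$ of $\ind(\gamma)$-cells relative to their boundaries, which is concentrated in degree $\ind(\gamma)$). This yields the desired $\gamma$.

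The main obstacle I anticipate is justifying the cell-attachment / Morse-complex statement rigorously in the infinite-dimensional setting with the particular metric \eqref{eq:metric}: one must know that the negative gradient flow is well-defined (the vector field $\nabla\mca{L}_V$ need only be locally Lipschitz, which is fine), that PS plus completeness give the compactness needed for the deformation to exist, and that near a nondegenerate critical point the usual local handle description applies — this requires checking that $\mca{L}_V$ is, say, $C^2$ with the gradient a sum of identity and compact operator at the critical point, so that the Morse lemma and the finite-dimensional index are valid. All of this is standard (it is exactly the framework of \cite{AbS}, \cite{Ben}), so the role of the proof is mostly to assemble these pieces; the one genuinely delicate point, the behaviour near $\partial Q$, has already been quarantined by Lemma \ref{lem:PS_boundary} and Lemma \ref{lem:complete}, which confine all relevant objects to a compact part of $\interior Q$ where everything behaves as on a closed manifold. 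A secondary subtlety is handling possibly infinitely many critical points within the window at a single critical value — but PS forces them to lie in a compact set, and nondegeneracy makes them isolated, hence finite, so no excision subtleties arise.
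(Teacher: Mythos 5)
Your proposal is correct and follows essentially the same route as the paper: verify completeness of the sublevel sets (Lemma \ref{lem:complete}), the PS-condition (Lemma \ref{lem:PS}), and nondegeneracy of the critical points in the window, then invoke standard infinite-dimensional Morse theory to conclude that nonvanishing of the relative $H_j$ forces a critical point of index $j$. The paper packages these ``standard pieces'' by checking conditions (A1)--(A7) of Abbondandolo--Majer for a truncated pseudo-gradient vector field (which handles the global well-posedness of the flow that you flag as a delicate point) and citing their Theorem 2.8, rather than re-deriving the cell-attachment description as you sketch.
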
 
\begin{proof} 
We use the theory developed in  \cite{AbM}, Section 2. 
Let us set 
\[
\hat{M}:= \{ \mca{L}_V < c_+\}, \quad
M:= \{c_- < \mca{L}_V < c_+\}, \quad 
f:= \mca{L}_V |_{\hat{M}}. 
\]
We take a smooth vector field $\hat{X}$ on $\hat{M}$, which is a negative scalar multiple of $\nabla f$ and satisfies the following properties:
\[
\| \nabla f(p) \|<1 \implies \hat{X}(p) = -\nabla f(p), \qquad
\| \nabla f(p) \| \ge 1 \implies  1 \le \| \hat{X}(p) \| \le 2.
\]
Let us examine whether
$\hat{M}$, $M$, $f$, $\hat{X}$ satisfy conditions (A1)-(A7) in \cite{AbM}, pp.22-23. 
(A1) follows from Lemma \ref{lem:complete}, (A6) follows from Lemma \ref{lem:PS}, and (A2)-(A5) are immediate. 
Since $\hat{X}$ is smooth, (A7) is also achieved by a small perturbation of $\hat{X}$, without violating (A1)-(A6)
(See Remark 2.1 \cite{AbS}).
Now our claim follows from Theorem 2.8 in \cite{AbM}. 
\end{proof} 

$V$ is said to be \textit{regular}, if all critical points of $\mca{L}_V$ are nondegenerate.  
The next Lemma \ref{lem:weber} shows that regularness can be achieved by compactly-supported small perturbations. 

\begin{lem}\label{lem:weber}
For any $V$ satisfying V-(i), there exists a sequence $(V_m)_{m=1,2,\ldots}$ such that all $V_m$ are regular and satisfy V-(i), and 
$(V_m)_m$ converges to $V$ in $C^\infty$-topology, i.e. for any $k \ge 0$ there holds $\lim_{m \to \infty} \| V- V_m \|_{C^k([0,1] \times Q)}=0$
(notice that $V- V_m$ extends to a $C^\infty$-function on $[0,1] \times Q$).
\end{lem}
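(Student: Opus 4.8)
The plan is to prove Lemma \ref{lem:weber} by a standard transversality / Sard--Smale argument, carried out in the compactly-supported category so that property V-(i) is preserved automatically. Since $V$ satisfies V-(i), we may write $V = \ep U + V_0$ where $V_0 \in C^\infty_c([0,1] \times \interior Q)$ (extended by zero to $[0,1] \times Q$). We will only perturb $V_0$ inside a fixed compact set, so every competitor $V_m := \ep U + V_{0,m}$ will again satisfy V-(i). Moreover, by Lemma \ref{lem:PS_boundary} applied to the \emph{set} of critical points (a PS-sequence argument), and by the a priori bound on $\mca{E}(\gamma)$ coming from $\mca{L}_V(\gamma)$ being bounded together with Lemma \ref{lem:boundary}, critical points of $\mca{L}_V$ with uniformly bounded action are confined to a fixed compact $Q' \subset \interior Q$; so it suffices to achieve nondegeneracy of all critical points landing in $Q'$, for each compact exhaustion piece, and then diagonalize in $m$.

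The core step is the transversality. Fix a compact neighborhood $N \subset \interior Q$ and a large action level $R$. Consider the Banach manifold $\mca{P}$ of perturbations $v \in C^k_c((0,1) \times \interior N)$ (or a suitable dense Banach subspace of $C^\infty_c$, e.g. a Floer-type space with weighted norms, as in \cite{AbS,Schwarz}) and the ``universal'' section: on the Hilbert bundle over $\Lambda \times \mca{P}$ whose fibre over $(\gamma,v)$ is $T_\gamma^*\Lambda$, take $\mca{F}(\gamma,v) := d\mca{L}^\Lambda_{\ep U + V_0 + v}(\gamma)$. One shows $\mca{F}$ is transverse to the zero section: given $(\gamma,v)$ with $\mca{F}(\gamma,v)=0$, the derivative in the $v$-direction is $\zeta \mapsto -\int_0^1 dv_t(\gamma)(\zeta)\,dt$, and since a nonconstant solution $\gamma$ of the Euler--Lagrange equation \eqref{eq:EL} cannot be locally constant on any subinterval (it has $\dot\gamma(0)=0$ but $\ddot\gamma \not\equiv 0$ unless $\gamma$ is constant), the set of times where $\gamma$ is ``injective enough'' is open and dense, so one can choose $v$ supported near such a time to produce any prescribed first-order effect; this surjects onto the (finite-dimensional) cokernel of the Hessian $d^2\mca{L}^\Lambda_V(\gamma)$, which is self-adjoint Fredholm by \eqref{eq:Hessian}. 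Hence the universal moduli space $\mca{Z} := \mca{F}^{-1}(0)$ is a Banach manifold, the projection $\mca{Z} \to \mca{P}$ is Fredholm of index $0$, and the Sard--Smale theorem gives a residual set of $v$ for which $\mca{L}^\Lambda_{\ep U + V_0 + v}$ has only nondegenerate critical points in $N$ with action $\le R$. (For this one needs $k$ large enough relative to the Fredholm index; the usual device of taking a separable Banach space of perturbations that is $C^\infty$-dense, then upgrading via the standard Taubes-type argument, removes the loss of derivatives.)

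With transversality in hand, the construction of the sequence is routine: exhaust $\interior Q$ by compacts $N_1 \subset N_2 \subset \cdots$ and action levels $R_m \to \infty$, pick for each $m$ a perturbation $v_m$, residual in the above sense for $(N_m, R_m)$, with $\|v_m\|_{C^m([0,1]\times Q)} < 1/m$ (possible since the residual set is dense and we may rescale), and set $V_m := \ep U + V_0 + v_m$. Then each $V_m$ satisfies V-(i) (the support of $v_m$ is compact in $\interior Q$), $V_m \to V$ in $C^\infty$, and any critical point of $\mca{L}^\Lambda_{V_m}$ lies in some $N_{m'}$ with bounded action (again by Lemmas \ref{lem:boundary} and \ref{lem:PS_boundary}), hence is nondegenerate once $m$ is large; a further diagonal adjustment makes \emph{all} critical points of each $\mca{L}^\Lambda_{V_m}$ nondegenerate.

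The main obstacle I anticipate is the transversality lemma itself---specifically verifying that varying the potential $V$ (which enters $\mca{L}^\Lambda_V$ only through a zeroth-order, $\gamma$-position-dependent term, \emph{not} through the velocities) is enough to kill the cokernel of the Hessian at every critical point. This hinges on the ``unique continuation'' fact that a nonconstant solution of \eqref{eq:EL} is nowhere locally constant, so that perturbations localized in both time and space span the relevant finite-dimensional obstruction space; the Neumann boundary condition $\dot\gamma(0)=\dot\gamma(1)=0$ needs a small separate check but causes no real trouble. The remaining technical nuisance---the derivative loss in Sard--Smale---is handled exactly as in \cite{AbS}, which is why it is legitimate to cite their Remark and Proposition rather than reproduce the argument; indeed the cleanest route is to invoke the perturbation results of \cite{AbS,Schwarz} directly, observing only that all constructions there can be performed with compactly supported perturbations, which is what preserves V-(i).
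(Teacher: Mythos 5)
The paper does not actually write out a proof of this lemma: it cites Theorem 1.1 of \cite{Weber} and remarks that the differences in setting (closed base manifold, $S^1$-parametrized Lagrangian) do not affect the argument. What you have sketched --- Sard--Smale transversality for a universal moduli space of compactly supported, time-dependent perturbations, plus a Baire/exhaustion argument over action levels and compact subsets of $\interior Q$, with V-(i) preserved because only the compactly supported part of $V$ is perturbed --- is essentially the content of that citation, so in substance your route and the paper's coincide.

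There is, however, one concrete hole in the transversality step as you wrote it. You argue surjectivity onto the cokernel only for \emph{nonconstant} critical points, via an ``injective enough'' set of times. This is both more than you need and less than you need. More than you need: since the perturbations are time-dependent, no injectivity of $\gamma$ is required --- if $\zeta\neq 0$ lies in the kernel of the Hessian, it is a Jacobi field, hence $\zeta(t_0)\neq 0$ for some $t_0\in(0,1)$, and $w(t,q)=\chi(t)f(q)$ with $\chi$ a bump at $t_0$ and $df(\gamma(t_0))\zeta(t_0)\neq 0$ already gives $\int_0^1 dw_t(\gamma)(\zeta)\,dt\neq 0$. Less than you need: constant critical points cannot be set aside here. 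Since $U\equiv 0$ on $Q(3\delta)$, every point of $Q(3\delta)$ outside the compact set $K$ of V-(i) is a constant critical point of $\mca{L}^\Lambda_{\ep U}$ whose Hessian has the parallel sections in its kernel; that is, the unperturbed functional has an open set of degenerate critical points, all of them constant, and these are precisely what the perturbation must remove. The time-localized perturbation above works verbatim at constant $\gamma$, so the repair is immediate, but the argument must be stated so as to include this case, and your exhaustion $N_m$ must eventually contain the compact locus $\{\nabla U=0\}\subset Q(\delta)$ (it does). Relatedly, the closing ``further diagonal adjustment'' should be made precise as the usual Baire argument: for each action bound $b$ the set of regular perturbations is open (by the PS-condition, Lemma \ref{lem:PS}) and dense (by Sard--Smale), and one intersects over $b\in\bb{N}$ before choosing $v_m$ small in $C^m$.
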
 

Lemma \ref{lem:weber} can be proved as Theorem 1.1 in \cite{Weber}. 
The setting in \cite{Weber} is a bit different from ours: in \cite{Weber}, the base Riemannian manifold is closed, and the Lagrangian is parametrized by $S^1$. 
However, these differences do not affect the proof. 
Now we can finish the proof of Proposition \ref{prop:Morsetheory} (i). 
As we explained at the beginning of this subsection, the proof of (ii) is parallel and omitted. 

\begin{proof} [\textbf{Proof of Proposition \ref{prop:Morsetheory}(i)}]
First we consider the case when $a, b \notin \Spec (\mca{L}_V)$. 
Since $\Spec (\mca{L}_V)$ is closed, there exists $c>0$ such that $[a-c,a+c]$ and $[b-c,b+c]$ are disjoint from $\Spec (\mca{L}_V)$. 
By Lemma \ref{lem:AbM}, inclusions 
\[
\{ \mca{L}_V < a-c \} \subset \{ \mca{L}_V < a\} \subset \{ \mca{L}_V < a+c \}, \quad
\{ \mca{L}_V < b-c \} \subset \{ \mca{L}_V < b\} \subset \{ \mca{L}_V < b+c \}
\]
induce isomorphisms on homologies. 
In particular, the homomorphism 
\[
H_j( \{ \mca{L}_V < b-c\}, \{ \mca{L}_V < a-c\}) \to 
H_j( \{ \mca{L}_V < b+c \}, \{ \mca{L}_V < a+c \})
\]
induced by inclusion is an isomorphism, and homologies on both sides are isomorphic 
to $H_j(\{ \mca{L}_V < b \}, \{ \mca{L}_V < a\})$, which is nonzero by our assumption. 

Take a sequence $(V_m)_m$ as in Lemma \ref{lem:weber}. 
For sufficiently large $m$, we have 
\[
\{ \mca{L}_V < a-c \} \subset \{ \mca{L}_{V_m} < a \} \subset \{ \mca{L}_V < a+c \}, \, 
\{ \mca{L}_V < b-c \} \subset \{ \mca{L}_{V_m} < b \} \subset \{ \mca{L}_V < b+c \}. 
\]
Hence there holds $H_j(\{\mca{L}_{V_m}<b\}, \{ \mca{L}_{V_m}<a\}) \ne 0$. 
By Lemma \ref{lem:AbM}, 
there exists $\gamma_m \in \Lambda$ such that 
$d\mca{L}_{V_m}(\gamma_m)=0$, 
$\mca{L}_{V_m}(\gamma_m) \in [a,b]$ and 
$\ind (\gamma_m) =j$. 

Since $\lim_{m \to \infty} \| V_m - V \|_{C^1} =0$, 
$(\gamma_m)_m$ is a PS-sequence of $\mca{L}_V$. 
Hence $(\gamma_m)_m$ has a convergent subsequence, and its limit $\gamma$ satisfies 
$d\mca{L}_V(\gamma)=0$, 
$\mca{L}_V(\gamma) \in [a,b]$. 
$\ind (\gamma) \le j$ follows from 
$\ind(\gamma) \le \liminf_{m \to \infty} \ind(\gamma_m)$, 
which easily follows from (\ref{eq:Hessian}).

Finally we consider the general case, i.e. $a$ and $b$ might be in $\Spec (\mca{L}_V)$. 
Since $\Spec (\mca{L}_V)$ has a zero measure, 
there exist increasing sequences $(a_m)_m$, $(b_m)_m$ such that $a_m,b_m \notin \Spec (\mca{L}_V)$ for every $m$, and 
$a=\lim_m a_m$, $b=\lim_m b_m$. 
Then, for sufficiently large $m$, $H_j(\{\mca{L}_V < b_m\}, \{\mca{L}_V < a_m\}) \ne 0$. 
Therefore there exists $\gamma_m$ such that $d\mca{L}_V(\gamma_m)=0$, 
$\mca{L}_V(\gamma_m) \in [a_m,b_m]$, $\ind(\gamma_m) \le j$. 
$(\gamma_m)_m$ is a PS-sequence of $\mca{L}_V$, thus it has a convergent subsequence. 
Then its limit $\gamma$ satisfies $d\mca{L}_V(\gamma)=0$, $\mca{L}_V(\gamma) \in [a,b]$, $\ind (\gamma) \le j$. 
\end{proof} 

\section{Billiard trajectory as a limit}

As in the previous section, we fix $\delta>0$ and use abbreviations $h:=h_\delta$, $U:=U_\delta$. 
The goal of this section is to prove Proposition \ref{prop:limit} below, which enables us to get a billiard trajectory as a limit of 
solutions of the approximating problem. 

\begin{prop}\label{prop:limit}
Let $a<b$ be positive real numbers, and $j$ be a nonnegative integer. 
\begin{enumerate}
\item[(i):] Suppose that for sufficiently small any $\ep>0$, there exists $\gamma_\ep \in \Lambda$ such that
$d\mca{L}^{\Lambda}_{\ep U}(\gamma_\ep) = 0$, $\mca{L}^{\Lambda}_{\ep U}(\gamma_\ep) \in [a,b]$ 
and $\ind (\gamma_\ep) \le j$. 
Then, there exists a brake billiard trajectory $\gamma$, such that 
$\sharp \mca{B}_\gamma  \le j-2$ and $\len(\gamma) \in [\sqrt{2a}, \sqrt{2b}]$. 
\item[(ii):] Suppose that for sufficiently small any $\ep>0$, there exists $\gamma_\ep \in \Omega$ such that
$d\mca{L}^{\Omega}_{\ep U}(\gamma_\ep)=0$, $\mca{L}^{\Omega}_{\ep U}(\gamma_\ep) \in [a,b]$, and $\ind (\gamma_\ep) \le j$. 
Then, there exists a periodic billiard trajectory $\gamma$, such that 
$\sharp \mca{B}_\gamma \le j$ and $\len(\gamma) \in [\sqrt{2a}, \sqrt{2b}]$. 
\end{enumerate}
\end{prop}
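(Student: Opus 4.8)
The plan is to analyze the family $(\gamma_\ep)_{\ep>0}$ as $\ep\to 0$ and extract a billiard trajectory as a limit, following the scheme of \cite{BG}. I treat case (i); case (ii) is parallel. First I would establish uniform estimates. Since $d\mca{L}^\Lambda_{\ep U}(\gamma_\ep)=0$, energy conservation for \eqref{eq:EL} with the autonomous potential gives $\tfrac12|\dot\gamma_\ep|^2 + \ep U(\gamma_\ep) \equiv e_\ep$ on $[0,1]$ for some constant $e_\ep$, and $\dot\gamma_\ep(0)=0$ forces $e_\ep = \ep U(\gamma_\ep(0))$; in particular $e_\ep\ge 0$ and $|\dot\gamma_\ep|^2 = 2(e_\ep - \ep U(\gamma_\ep))$. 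Combining this with $\mca{L}^\Lambda_{\ep U}(\gamma_\ep)=\int_0^1 \tfrac12|\dot\gamma_\ep|^2 - \ep U(\gamma_\ep)\,dt \in [a,b]$, one derives $\int_0^1 \ep U(\gamma_\ep)\,dt$ and $\|\dot\gamma_\ep\|_{L^2}^2$ are both bounded, and moreover $e_\ep$ is bounded. Then, using Lemma \ref{lem:U}(i) with a vector field $\nu$ and integrating $\ddot\gamma_\ep = -\ep\nabla U(\gamma_\ep)$ against $\nu(\gamma_\ep)$ exactly as in the proof of Lemma \ref{lem:PS_boundary}, I would bound $\int_0^1 |\ep\nabla U(\gamma_\ep)|\,dt$ and hence the total variation of $\dot\gamma_\ep$, giving a $W^{1,\infty}$-type bound and equicontinuity of $\gamma_\ep$.

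Next, by Arzelà–Ascoli the curves $\gamma_\ep$ converge uniformly (along a subsequence) to a Lipschitz curve $\gamma:[0,1]\to Q$, with $\dot\gamma_\ep\to\dot\gamma$ weakly. The bound on $\int_0^1\ep U(\gamma_\ep)\,dt$ together with Lemma \ref{lem:boundary} shows $\gamma$ cannot spend positive measure near $\partial Q$; in fact $\gamma^{-1}(\partial Q)$ is finite, because near any interior point the equation $\ddot\gamma_\ep\equiv 0$ (when $\gamma_\ep$ stays in the region where $V=\ep U$ is... actually where $\nabla U$ is small, i.e. away from $\partial Q$) forces $\gamma$ to be piecewise geodesic. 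The key local analysis is at a bounce point $t_0$ with $\gamma(t_0)\in\partial Q$: here I would rescale and use that $|\dot\gamma_\ep|^2 = 2(e_\ep-\ep U(\gamma_\ep))$ stays bounded below away from zero near $t_0$ (else $\gamma$ would halt at the boundary, and the energy bound rules this out unless $e_\ep\to 0$, the bouncing-ball degenerate case), so $\gamma$ approaches $\partial Q$ with definite speed, and the normal component of $\dot\gamma_\ep$ reverses sign while the tangential component is preserved in the limit — this yields B-(i) and the law of reflection B-(ii). Passing the energy identity to the limit gives $\tfrac12|\dot\gamma|^2\equiv e:=\lim e_\ep$ a.e., and then $\len(\gamma)=\int_0^1|\dot\gamma|\,dt = \sqrt{2e}$; since $\int_0^1\ep U(\gamma_\ep)\,dt\to 0$ (again by Lemma \ref{lem:boundary} it would otherwise blow up, contradicting the bound) we get $\mca{L}^\Lambda_{\ep U}(\gamma_\ep)\to e$, so $e\in[a,b]$ and $\len(\gamma)\in[\sqrt{2a},\sqrt{2b}]$. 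The endpoint conditions $\dot\gamma(0)=\dot\gamma(1)=0$ pass to the limit, except that if $e=0$ the limit is constant — but then $\mca{L}^\Lambda_{\ep U}(\gamma_\ep)\to 0<a$, contradiction; so $\gamma$ is nonconstant and, after the endpoint analysis (an endpoint on $\partial Q$ with perpendicular incoming velocity is the degenerate $h\to 0$ limit of $\dot\gamma_\ep$ small there), is a genuine brake billiard trajectory.

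Finally, the bound $\sharp\mca{B}_\gamma\le j-2$ must come from the Morse-index hypothesis $\ind(\gamma_\ep)\le j$. The plan is: each bounce contributes, in the limit $\ep\to 0$, at least one unit to the Morse index of $\gamma_\ep$ — heuristically, near a reflection the potential term $-\langle\nabla_\eta\nabla V_t,\zeta\rangle$ in \eqref{eq:Hessian} becomes strongly negative-definite in the normal direction as $\ep\to 0$ (since $U$ is convex-blowing-up near $\partial Q$, this term has a sign that \emph{increases} index), so $\liminf_{\ep\to0}\ind(\gamma_\ep)\ge \sharp\mca{B}_\gamma + (\text{index of the limiting broken geodesic as a brake billiard critical point})$. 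A cleaner route, matching \cite{BG}, is to show directly that for small $\ep$, $\ind(\gamma_\ep)\ge \sharp\mca{B}_\gamma+2$: construct $\sharp\mca{B}_\gamma+2$ linearly independent variations (two from the free endpoints, in the spirit of the $\Lambda$ versus $\Omega$ count, plus one localized normal variation supported near each bounce time) on which the Hessian \eqref{eq:Hessian} is negative for $\ep$ small. Then $\sharp\mca{B}_\gamma+2\le\ind(\gamma_\ep)\le j$ gives the claim; in case (ii) the two endpoint variations are absent, yielding $\sharp\mca{B}_\gamma\le j$. I expect this index estimate to be the main obstacle: the negative variations localized at bounce times must be built carefully so that their Hessian contributions genuinely dominate as $\ep\to 0$, uniformly in the other variations, and one must handle the interaction with the curvature term and with variations along the limiting geodesic arcs; establishing the linear independence and the simultaneous negativity of the Hessian on their span is the technical heart of the proof.
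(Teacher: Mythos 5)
Your overall architecture is the same as the paper's (both follow Benci--Giannoni): uniform estimates, extraction of a limit curve, identification of the reflection law, and an index count giving $\sharp\mca{B}_\gamma+2\le j$ in the $\Lambda$ case. However, two steps that you treat as either easy or merely ``technical'' are precisely where the real content lies, and as written they are gaps. First, your justification that $\int_0^1\ep U(\gamma_\ep)\,dt\to0$ (``by Lemma \ref{lem:boundary} it would otherwise blow up, contradicting the bound'') does not work: Lemma \ref{lem:boundary} controls $\int h(\gamma_\ep)^{-2}\,dt$ without the factor $\ep$, and boundedness of $\int\ep U(\gamma_\ep)\,dt$ by itself does not force the limit to be zero. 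The paper's route is to first establish the stronger uniform bound $\sup_\ep\int_0^1\ep\,h(\gamma_\ep)^{-3}\,dt<\infty$ (from integrating $\ddot\gamma_\ep=-\ep\nabla U(\gamma_\ep)$ against a vector field $\nu$ as in Lemma \ref{lem:U}(i)) and then apply H\"older: $\int\ep h^{-2}\le\bigl(\int\ep h^{-3}\bigr)^{2/3}\ep^{1/3}\to0$. Without this, you cannot conclude $E=\lim\mca{L}_\ep(\gamma_\ep)\in[a,b]$, i.e.\ the length estimate fails.

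Second, the bounce set and the index bound. You conflate $\gamma^{-1}(\partial Q)$ with the set of bounce times and assert its finiteness from the limit being piecewise geodesic; in fact the limit may touch $\partial Q$ on a larger set, and finiteness of the bounce set is a \emph{consequence} of the index hypothesis, not of the convergence. The device you are missing is the weak limit measure: $2\ep h(\gamma_\ep)^{-3}\,dt\rightharpoonup\mu$, which is available exactly because of the $L^1$ bound above. Then (a) the weak Euler--Lagrange identity $\int\langle\dot\gamma_0,\nabla_t v\rangle\,dt=\int\langle\nu(\gamma_0),v\rangle\,d\mu$ delivers, by testing with suitable $v$, the perpendicularity at $t=0,1$, the fact that $\{0,1\}\subset\supp\mu$, and the reflection law at interior points of $\supp\mu$ --- replacing your heuristic ``normal component reverses sign''; and (b) the index count is $\sharp\supp\mu\le j$, obtained by taking $v_\ep=\psi\,\nabla h(\gamma_\ep)$ localized near each $\tau\in\supp\mu$ and showing $d^2\mca{L}_\ep(\gamma_\ep)(v_\ep,v_\ep)\to-\infty$: the dominant term is $-6\ep\int\{dh(\gamma_\ep)(v_\ep)\}^2h(\gamma_\ep)^{-4}\,dt$, and the blow-up follows from H\"older, $\ep\int_B h^{-4}\ge(c\ep)^{-1/3}\bigl(\int_B\ep h^{-3}\bigr)^{4/3}$, together with $\liminf_\ep\int_B\ep h(\gamma_\ep)^{-3}\,dt\ge\mu(B)/2>0$. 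Your ``two endpoint variations plus one per bounce'' is the right count (the endpoints are themselves points of $\supp\mu$), but without the measure $\mu$ and the $h^{-3}$/H\"older mechanism the simultaneous negativity you acknowledge as the main obstacle is not established.
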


We only prove (i), since (ii) can be proved by parallel arguments. 
In the following arguments, we fix $\gamma_\ep$ for each $\ep$, 
as in Proposition \ref{prop:limit} (i). 
We abbreviate $\mca{L}^\Lambda_{\ep U}$ as $\mca{L}_\ep$. 

\begin{lem}\label{lem:limepU}
$\lim_{\ep \to 0} \int_0^1 \ep U(\gamma_\ep) \, dt =0$.
\end{lem}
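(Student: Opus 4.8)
The plan is to estimate $\int_0^1 \ep U(\gamma_\ep)\,dt$ by combining the two pieces of information we have about $\gamma_\ep$: it is a critical point of $\mca{L}_\ep$, and $\mca{L}_\ep(\gamma_\ep)$ is bounded. First I would note that, since $\gamma_\ep$ solves the Euler--Lagrange equation \eqref{eq:EL} with $V_t = \ep U$, we have $\ddot\gamma_\ep + \ep\nabla U(\gamma_\ep) \equiv 0$ and $\dot\gamma_\ep(0)=\dot\gamma_\ep(1)=0$. Pairing this with the vector field $\nu$ of Lemma \ref{lem:U}(i) along $\gamma_\ep$ and integrating by parts (the boundary terms vanish because $\dot\gamma_\ep$ vanishes at the endpoints) gives, exactly as in the proof of Lemma \ref{lem:PS_boundary},
\[
\int_0^1 |\ep\nabla U(\gamma_\ep)|\,dt = \int_0^1 \langle \ep\nabla U(\gamma_\ep), \nu(\gamma_\ep)\rangle\,dt = \int_0^1 \langle \dot\gamma_\ep, \nabla_t(\nu(\gamma_\ep))\rangle\,dt \le C\,\|\dot\gamma_\ep\|_{L^2}^2,
\]
where $C$ depends only on $Q$ and $\nu$. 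So the first task is to bound $\|\dot\gamma_\ep\|_{L^2}$ uniformly, and more: to show $\ep\int_0^1 U(\gamma_\ep)\,dt \to 0$.

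The key mechanism, again mirroring Lemma \ref{lem:PS_boundary}, is Lemma \ref{lem:U}(ii): for every $\eta>0$ there is $M_\eta$ with $U(q) \le \eta|\nabla U(q)| + M_\eta$ on $\interior Q$. Feeding the displayed inequality into this,
\[
\int_0^1 \ep U(\gamma_\ep)\,dt \le \eta\int_0^1 |\ep\nabla U(\gamma_\ep)|\,dt + \ep M_\eta \le \eta C\,\|\dot\gamma_\ep\|_{L^2}^2 + \ep M_\eta.
\]
Since $\|\dot\gamma_\ep\|_{L^2}^2 = 2\mca{L}_\ep(\gamma_\ep) + 2\int_0^1 \ep U(\gamma_\ep)\,dt \le 2b + 2\int_0^1\ep U(\gamma_\ep)\,dt$, choosing $\eta$ so that $2\eta C \le 1/2$ lets us absorb the $\int_0^1 \ep U(\gamma_\ep)\,dt$ term into the left side, yielding a uniform bound $\int_0^1 \ep U(\gamma_\ep)\,dt \le M$ and hence $\|\dot\gamma_\ep\|_{L^2}^2 \le 2b + 2M$ uniformly in $\ep$. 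Now I would run the estimate once more, this time keeping $\eta$ fixed and small but exploiting the now-uniform bound on $\|\dot\gamma_\ep\|_{L^2}^2$: from $\int_0^1 \ep U(\gamma_\ep)\,dt \le \eta C(2b+2M) + \ep M_\eta$, let $\ep\to 0$ to get $\limsup_{\ep\to 0}\int_0^1 \ep U(\gamma_\ep)\,dt \le 2\eta C(b+M)$, and since $\eta>0$ is arbitrary the limit is $0$.

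The main obstacle, such as it is, is organizing this bootstrap cleanly: one cannot directly bound $\int_0^1\ep U(\gamma_\ep)\,dt$ because the only a priori control is on $\mca{L}_\ep(\gamma_\ep) = \frac12\|\dot\gamma_\ep\|_{L^2}^2 - \int_0^1\ep U(\gamma_\ep)\,dt$, a difference of two possibly large quantities, so one must first extract a uniform bound on $\int_0^1\ep U(\gamma_\ep)\,dt$ via the absorption trick before the final limiting step. Everything else — the integration by parts, the choice of $\nu$, the use of Lemma \ref{lem:U}(ii) — is identical in spirit to the proof of Lemma \ref{lem:PS_boundary} and involves only routine estimates; no compactness or Palais--Smale machinery is needed here since $\gamma_\ep$ is already a genuine critical point with a known action bound.
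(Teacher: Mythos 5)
Your proof is correct. Up to the point where you obtain the uniform bounds $\sup_\ep \int_0^1 \ep U(\gamma_\ep)\,dt < \infty$ and $\sup_\ep \|\dot\gamma_\ep\|_{L^2}^2 < \infty$, it coincides with the paper's argument: the same integration by parts against the vector field $\nu$ of Lemma \ref{lem:U}(i), the same use of Lemma \ref{lem:U}(ii), the same absorption trick. The endgame, however, is genuinely different. You quantify Lemma \ref{lem:U}(ii) as $U \le \eta |\nabla U| + M_\eta$ for \emph{every} $\eta>0$ and, once $\int_0^1 \ep|\nabla U(\gamma_\ep)|\,dt$ is uniformly bounded, send first $\ep \to 0$ and then $\eta \to 0$; this is clean and uses nothing beyond what you already set up. The paper instead converts the uniform bound on $\int_0^1 \ep|\nabla U(\gamma_\ep)|\,dt$ into a uniform bound on $\int_0^1 \ep h(\gamma_\ep)^{-3}\,dt$ (via $|\nabla U| = 2|\nabla h|\,h^{-3}$ and $|\nabla h|=1$ near $\partial Q$) and then applies H\"older:
\[
\int_0^1 \ep h(\gamma_\ep)^{-2}\,dt \le \Bigl( \int_0^1 \ep h(\gamma_\ep)^{-3}\,dt \Bigr)^{2/3} \ep^{1/3} \longrightarrow 0.
\]
The two conclusions are of comparable difficulty, but the paper's version has a payoff your route forgoes: the $L^1$-bound on $\ep h(\gamma_\ep)^{-3}$ is recorded in Corollary \ref{cor:limepU} and is essential afterwards, when $2\ep h(\gamma_\ep)^{-3}\,dt$ is extracted as a weakly convergent family of measures whose limit $\mu$ encodes the bounce times. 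If you adopt your argument for Lemma \ref{lem:limepU}, you would still need to establish that bound separately (it does follow from your estimates, since you already control $\int_0^1\ep|\nabla U(\gamma_\ep)|\,dt$) for the remainder of Section 3 to go through.
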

\begin{proof}
Let us take $\nu$ as in Lemma \ref{lem:U} (i). 
By $\ddot{\gamma}_\ep + \ep \nabla U(\gamma_\ep) \equiv 0$ and $\dot{\gamma}_\ep(0) = \dot{\gamma}_\ep(1)=0$, we have 
\[
\int_0^1 \ep | \nabla U(\gamma_\ep) | \, dt
=\int_0^1 \langle \ep \nabla U(\gamma_\ep), \nu(\gamma_\ep) \rangle \, dt 
=\int_0^1 \langle \dot{\gamma_\ep}, \nabla_t(\nu(\gamma_\ep)) \rangle \, dt.
\]
Setting $M_0:= \max_{q \in Q} | \nabla \nu(q) |$, there holds 
\begin{equation}\label{eq:UdU}
\int_0^1 \ep |\nabla U(\gamma_\ep)| \, dt 
\le M_0 \| \dot{\gamma}_\ep \|_{L^2}^2 
= 2M_0\biggl( \mca{L}_\ep(\gamma_\ep) + \int_0^1 \ep U(\gamma_\ep) \, dt \biggr).
\end{equation}
By Lemma \ref{lem:U} (ii), 
there exists $M_1>0$ such that 
$U(q) \le |\nabla U(q)|/4M_0 + M_1$ for any $q \in \interior Q$. 
By same arguments as in the proof of Lemma \ref{lem:PS_boundary}, we get 
\[
\frac{1}{2} \int_0^1 \ep U(\gamma_\ep) \, dt \le \frac{1}{2}  \mca{L}_\ep(\gamma_\ep) + \ep M_1.
\]
Since $\sup_\ep \mca{L}_\ep (\gamma_\ep) \le b$, 
the above estimate implies $\sup_\ep \int_0^1 \ep U(\gamma_\ep) \, dt< \infty$. 
By (\ref{eq:UdU}), we get $\sup_\ep \int_0^1 \ep |\nabla U(\gamma_\ep)| \, dt < \infty$. 
The following identity is clear from the definition of $U$:
\[
\int_0^1 \ep |\nabla U(\gamma_\ep)| \, dt 
=\int_0^1 2 \ep |\nabla h(\gamma_\ep)| h(\gamma_\ep)^{-3} \, dt. 
\]
Since $|\nabla h(q)|=1$ for any $q$ such that $h(q) < \delta$, we get 
$\sup_\ep \int_0^1 \ep h(\gamma_\ep)^{-3} \, dt < \infty$. 
Finally, by H\"{o}lder inequality we get
\[
\limsup_{\ep \to 0}  \int_0^1 \ep h(\gamma_\ep)^{-2} \, dt 
\le \limsup_{\ep \to 0} \biggl(\int_0^1 \ep h(\gamma_\ep)^{-3} \, dt\biggr)^{2/3} \cdot \ep^{1/3} =0.
\]
Since $0 \le \ep U(q) \le \ep h(q)^{-2}$ for any $q \in \interior Q$, we obtain 
$\lim_{\ep \to 0} \int_0^1 \ep U(\gamma_\ep) \, dt =0$. 
\end{proof}

\begin{cor}\label{cor:limepU}
The following quantities are bounded on $\ep$: 
\[
\int_0^1 |\ddot{\gamma}_\ep(t)|\,dt = \int_0^1 \ep |\nabla U(\gamma_\ep)| \, dt, \quad
\int_0^1 \ep h(\gamma_\ep)^{-3} \, dt, \quad
E(\gamma_\ep):=|\dot{\gamma}_\ep|^2/2 + \ep U(\gamma_\ep).
\]
\end{cor}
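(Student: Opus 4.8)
The plan is to prove Corollary~\ref{cor:limepU} by combining the identity $\ddot{\gamma}_\ep = -\ep\nabla U(\gamma_\ep)$ with the estimates already extracted in the proof of Lemma~\ref{lem:limepU}. First I would recall that in that proof we established $\sup_\ep \int_0^1 \ep|\nabla U(\gamma_\ep)|\,dt<\infty$ and $\sup_\ep\int_0^1\ep h(\gamma_\ep)^{-3}\,dt<\infty$; these are literally the first two quantities in the statement (the equality $\int_0^1|\ddot\gamma_\ep|\,dt=\int_0^1\ep|\nabla U(\gamma_\ep)|\,dt$ being immediate from the Euler--Lagrange equation \eqref{eq:EL} with $V_t=\ep U$). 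So the only genuinely new point is the boundedness of the energy $E(\gamma_\ep)=|\dot\gamma_\ep|^2/2+\ep U(\gamma_\ep)$.

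For the energy, the key observation is that $E(\gamma_\ep)$ is \emph{constant in $t$}: differentiating, $\frac{d}{dt}E(\gamma_\ep)=\langle\dot\gamma_\ep,\nabla_t\dot\gamma_\ep\rangle+\ep\langle\nabla U(\gamma_\ep),\dot\gamma_\ep\rangle=\langle\dot\gamma_\ep,\ddot\gamma_\ep+\ep\nabla U(\gamma_\ep)\rangle=0$ by the equation of motion. (This is the usual conservation of energy for an autonomous Lagrangian system, and it uses that $U$ does not depend on $t$.) Hence $E(\gamma_\ep)$ equals its constant value, which I can compute at any convenient time; evaluating at $t=0$ where $\dot\gamma_\ep(0)=0$ gives $E(\gamma_\ep)\equiv \ep U(\gamma_\ep(0))$, but more usefully I would integrate over $[0,1]$ to get
\[
E(\gamma_\ep)=\int_0^1 E(\gamma_\ep)\,dt=\int_0^1 \frac{|\dot\gamma_\ep|^2}{2}\,dt+\int_0^1\ep U(\gamma_\ep)\,dt=\mca{L}_\ep(\gamma_\ep)+2\int_0^1\ep U(\gamma_\ep)\,dt.
\]
Since $\mca{L}_\ep(\gamma_\ep)\in[a,b]$ by hypothesis and $\int_0^1\ep U(\gamma_\ep)\,dt\to 0$ (indeed is bounded) by Lemma~\ref{lem:limepU}, the right-hand side is bounded uniformly in $\ep$, which proves $\sup_\ep E(\gamma_\ep)<\infty$.

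I do not expect any serious obstacle here: the statement is essentially a repackaging of facts already proved, the one new ingredient being the elementary conservation-of-energy computation, whose only subtlety is noting that the Lagrangian $L^\Lambda_{\ep U}$ is time-independent so that energy is a first integral. If one wants to be careful, the mild point to check is that $\gamma_\ep$ is genuinely $C^2$ (so that $\frac{d}{dt}E(\gamma_\ep)$ makes sense pointwise) --- but this is already guaranteed since $d\mca{L}^\Lambda_{\ep U}(\gamma_\ep)=0$ forces $\gamma_\ep\in C^\infty$, as recorded after \eqref{eq:EL}.
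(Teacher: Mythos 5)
Your proposal is correct and follows essentially the same route as the paper: the first two bounds are read off from the proof of Lemma \ref{lem:limepU}, and the energy bound comes from the identity $E(\gamma_\ep)=\mca{L}_\ep(\gamma_\ep)+2\int_0^1\ep U(\gamma_\ep)\,dt$ together with $\mca{L}_\ep(\gamma_\ep)\le b$ and the boundedness of $\int_0^1\ep U(\gamma_\ep)\,dt$. The only difference is that you spell out the conservation-of-energy computation that the paper leaves implicit in writing $E(\gamma_\ep)=\int_0^1 E(\gamma_\ep)\,dt$; that is a welcome clarification, not a divergence.
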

\begin{proof}
In the course of the proof of Lemma \ref{lem:limepU}, we have shown that the first two quantities are bounded. 
$\sup_\ep E(\gamma_\ep) < \infty$ follows from the identity 
\[
E(\gamma_\ep) = \int_0^1 \frac{|\dot{\gamma}_\ep|^2}{2} + \ep U(\gamma_\ep) \, dt = \mca{L}_\ep(\gamma_\ep) + 2  \int_0^1 \ep U(\gamma_\ep) \, dt, 
\]
$\sup_\ep \mca{L}_\ep(\gamma_\ep) \le b$ and $\sup_\ep \int_0^1 \ep U(\gamma_\ep) \, dt < \infty$. 
\end{proof}

By Corollary \ref{cor:limepU}, $\ddot{\gamma}_\ep$ is $L^1$-bounded. 
Since $W^{2,1}([0,1])$ is compactly embedded to $W^{1,2}([0,1])$, a certain subsequence of $(\gamma_\ep)_\ep$ is convergent  in $W^{1,2}([0,1],Q)$ as $\ep \to 0$. 
We denote the limit as $\gamma_0$. 
Moreover, since $2 \ep h(\gamma_\ep)^{-3}$ is $L^1$-bounded, up to subsequence it 
converges to a certain Borel measure $\mu \ge 0$ on $[0,1]$ in weak sense, i.e. for any $f \in C^0([0,1])$ there holds
\[
\lim_{\ep \to 0} 
\int_0^1 f(t) 2 \ep h(\gamma_\ep(t))^{-3} \,dt  =  \int_0^1 f(t) \, d \mu(t) .
\]

For any $t \in [0,1]$ and $c>0$, we set $B_c(t):= \{ s \in [0,1] \mid |s-t| < c\}$. 
The \textit{support} of $\mu$ is defined as
$\supp \mu:=  \{ t \in [0,1] \mid \forall c>0, \mu(B_c(t)) >0 \}$.

\begin{lem}\label{lem:supp_mu}
There holds 
$\supp \mu \subset \gamma_0 ^{-1}(\partial Q)$ and $\sharp \supp \mu \le j$. 
\end{lem}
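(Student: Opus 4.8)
The plan is to prove the two assertions of Lemma \ref{lem:supp_mu} separately, both exploiting the estimates gathered in Corollary \ref{cor:limepU} and the defining convergence of $\gamma_\ep \to \gamma_0$ in $W^{1,2}$ (in particular in $C^0$, by Sobolev embedding).

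For the inclusion $\supp\mu \subset \gamma_0^{-1}(\partial Q)$, I would argue by contradiction. Suppose $t_0 \in \supp\mu$ but $\gamma_0(t_0) \in \interior Q$, so $d(\gamma_0(t_0)) = \eta > 0$ for some $\eta$. By continuity of $\gamma_0$ and uniform convergence $\gamma_\ep \to \gamma_0$, there is $c > 0$ and $\ep_0 > 0$ such that $d(\gamma_\ep(s)) \ge \eta/2$, hence $h(\gamma_\ep(s)) \ge \min(\eta/2, \text{const})$ bounded below by a positive constant, for all $s \in B_c(t_0)$ and all $\ep < \ep_0$. Then on $B_c(t_0)$ the integrand $2\ep h(\gamma_\ep)^{-3}$ is bounded by $\text{const}\cdot\ep$, so choosing $f \in C^0([0,1])$ with $0 \le f \le 1$, $f \equiv 1$ on $B_{c/2}(t_0)$, $\supp f \subset B_c(t_0)$, we get $\mu(B_{c/2}(t_0)) \le \int f\,d\mu = \lim_\ep \int f \cdot 2\ep h(\gamma_\ep)^{-3}\,dt \le \lim_\ep \text{const}\cdot\ep = 0$, contradicting $t_0 \in \supp\mu$.

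For the cardinality bound $\sharp\supp\mu \le j$, the idea is to relate points of $\supp\mu$ to the Morse index $\ind(\gamma_\ep) \le j$. This is the step I expect to be the main obstacle. The strategy I would pursue: if $\supp\mu$ contained $j+1$ distinct points $t_1 < \cdots < t_{j+1}$, one can construct $j+1$ variation vector fields $\zeta_1,\dots,\zeta_{j+1} \in T_{\gamma_\ep}\Lambda$ with pairwise disjoint supports, each supported in a small neighborhood of $t_i$ (mapped appropriately into $\gamma_\ep^*TQ$, using that $\gamma_\ep(t_i)$ is near $\partial Q$ so that $\nabla^2 U$ is large and positive in the normal direction), such that the Hessian $d^2\mca{L}_\ep(\gamma_\ep)(\zeta_i,\zeta_i) < 0$ for all small $\ep$. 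Concretely, from \eqref{eq:Hessian} the dangerous term is $-\langle \nabla_{\zeta_i}\nabla(\ep U)(\gamma_\ep),\zeta_i\rangle$; choosing $\zeta_i$ to point in the direction $\nu$ normal to $\partial Q$ and localized where $\ep h(\gamma_\ep)^{-3}$ has concentrated mass $\mu(\{t_i\}) > 0$ makes this term dominate the curvature and $\|\nabla_t\zeta_i\|^2$ contributions. Since the $\zeta_i$ have disjoint supports they span a $(j+1)$-dimensional subspace on which $d^2\mca{L}_\ep(\gamma_\ep)$ is negative definite, forcing $\ind(\gamma_\ep) \ge j+1$, contradiction.

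The technical heart is making the localization quantitative: one must choose the neighborhoods of the $t_i$ small enough that the $L^2$-norms of $\nabla_t\zeta_i$ do not blow up faster than the negative potential term, using the $L^1$-bound on $\ep h(\gamma_\ep)^{-3}$ together with $\mu(\{t_i\}) > 0$ and a careful rescaling of the bump functions defining $\zeta_i$ (width $\sim$ a parameter $\tau$ shrinking suitably with $\ep$). One also needs Lemma \ref{lem:U}(i) so that $\langle \nabla U, \nu\rangle = |\nabla U|$ and the sign works out, and the fact that near $\partial Q$, in normal coordinates, $U$ is convex in the normal variable so $\langle \nabla_\nu \nabla U, \nu\rangle \ge 0$ with the right sign. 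Modulo this estimate, which follows the corresponding argument in \cite{BG}, the two assertions combine to give the lemma.
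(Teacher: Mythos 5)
Your overall architecture is the same as the paper's: the first assertion is proved exactly as you describe (uniform convergence of $\gamma_\ep$ to $\gamma_0$ forces $\ep h(\gamma_\ep)^{-3}\to 0$ uniformly near any $\tau$ with $\gamma_0(\tau)\notin\partial Q$), and the second assertion is obtained by producing, for each point of $\supp\mu$, a disjointly supported variation along which the Hessian becomes negative, whence $\liminf_\ep\ind(\gamma_\ep)\ge\sharp\supp\mu$.

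However, the step you yourself flag as ``the technical heart'' is genuinely missing, and your sketch of how to fill it points in a slightly wrong direction. First, knowing that $U$ is convex in the normal direction, i.e.\ $\langle\nabla_\nu\nabla U,\nu\rangle\ge 0$, is not enough: the potential term in the Hessian must not merely be nonpositive but must \emph{diverge} to $-\infty$, since it has to beat the strictly positive kinetic term $\int|\nabla_t\zeta_i|^2$. The paper achieves this with a \emph{fixed} bump function $\psi$ (no width shrinking with $\ep$) and $v_\ep:=\psi\,\nabla h(\gamma_\ep)$: writing out $\nabla^2(\ep U)$ for $U=h^{-2}-(2\delta)^{-2}$, the term $2\ep\langle\nabla_{v_\ep}\nabla h,v_\ep\rangle h^{-3}$ stays bounded by the $L^1$-bound on $\ep h(\gamma_\ep)^{-3}$, while the term $-6\ep\int(dh(\gamma_\ep)(v_\ep))^2h(\gamma_\ep)^{-4}\,dt$ is controlled from below by the reverse H\"older inequality
\[
\ep\int_{B_{c/2}(\tau)}h(\gamma_\ep)^{-4}\,dt\;\ge\;(c\ep)^{-1/3}\Bigl(\int_{B_{c/2}(\tau)}\ep h(\gamma_\ep)^{-3}\,dt\Bigr)^{4/3},
\]
which diverges like $\ep^{-1/3}$ because $\liminf_\ep\int_{B_{c/2}(\tau)}\ep h(\gamma_\ep)^{-3}\,dt\ge\mu(B_{c/2}(\tau))/2>0$. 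Since $\psi$ is fixed, $\|\nabla_t v_\ep\|_{L^2}$ stays bounded (using $\sup_\ep\|\dot\gamma_\ep\|_{L^\infty}<\infty$ from Corollary \ref{cor:limepU}), so there is no competition to resolve; your proposal to shrink the bump width with $\ep$ creates exactly the competing blow-up in $\|\nabla_t\zeta_i\|^2$ that you then cannot close. Second, a small but real error: a point of $\supp\mu$ need not be an atom, so you cannot invoke $\mu(\{t_i\})>0$; you must work with $\mu(B_c(t_i))>0$, which is what the definition of support gives and what the H\"older argument above actually uses.
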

\begin{proof}
If $\tau \in [0,1]$ satisfies $\gamma_0(\tau) \notin \partial Q$, 
$\ep h(\gamma_\ep(t))^{-3}$ converges uniformly to $0$ in a neighborhood of $\tau$, thus $\tau \notin \supp \mu$. 
Therefore $\supp \mu \subset \gamma_0^{-1}(\partial Q)$. 

We show that $\sharp \supp \mu \le j$. 
For any $\tau \in \supp \mu$, we have shown that $\gamma_0(\tau) \in \partial Q$, hence $d(\gamma_0(\tau)) =0$. 
We take $c>0$ so that $d(\gamma_0(t)) < \delta$ for any $t \in B_c(\tau)$. 

We take $\psi \in C^\infty([0,1])$ so that $0 \le \psi(t) \le 1$ for any $t$, 
$\supp \psi \subset B_c(\tau)$, and $\psi \equiv 1$ on $B_{c/2}(\tau)$.  
Let $v_\ep(t):= \psi(t) \nabla h(\gamma_\ep(t))$. 
Our aim is to show 
\begin{equation}\label{eq:Hessian_infty} 
\lim_{\ep \to 0} d^2 \mca{L}_\ep (\gamma_\ep) (v_\ep, v_\ep) = -\infty.
\end{equation}
Obviously $\supp v_\ep \subset B_c(\tau)$, and we may take $c>0$ arbitrarily small. 
Hence once we prove (\ref{eq:Hessian_infty}), it is easy to show that 
$\liminf_{\ep \to 0} \ind (\gamma_\ep) \ge \sharp \supp \mu$. 
On the other hand, by our assumption $\ind(\gamma_\ep) \le j$ for any $\ep>0$. 
Hence $\sharp \supp \mu \le j$. 

Now we show (\ref{eq:Hessian_infty}). By (\ref{eq:Hessian}), there holds 
\begin{align*}
d^2 \mca{L}_\ep (\gamma_\ep)(v_\ep, v_\ep)& = \int_0^1 |\nabla_t v_\ep|^2 - \langle R(\dot{\gamma}_\ep, v_\ep)(v_\ep), \dot{\gamma}_\ep \rangle \,  dt \\
&+ 2\ep \int_0^1 \langle \nabla_{v_\ep} \nabla h(\gamma_\ep), v_\ep \rangle h(\gamma_\ep)^{-3} \, dt 
- 6\ep \int_0^1 \{dh(\gamma_\ep)(v_\ep) \}^2 h(\gamma_\ep)^{-4} \, dt.
\end{align*}
By Corollary \ref{cor:limepU}, $\sup_\ep \| \dot{\gamma}_\ep \|_{L^\infty} < \infty$. 
Thus it is easy to check that the first integral is bounded on $\ep$. 
Corollary \ref{cor:limepU} also shows $\sup_\ep \int_0^1 \ep h(\gamma_\ep)^{-3} \,dt < \infty$, thus the 
second integral is bounded on $\ep$. 

Recall that $d(\gamma_0(t)) < \delta$ for any $t \in B_c(\tau)$. 
Hence when $\ep>0$ is sufficiently small, $d(\gamma_\ep(t)) < \delta$ for any $t \in B_{c/2}(\tau)$. 
For such $\ep>0$, 
$dh(\gamma_\ep)(v_\ep) = |\nabla h(\gamma_\ep)|^2 = 1$ on $B_{c/2}(\tau)$. 
Therefore 
\[
\ep \int_0^1 \{ dh(\gamma_\ep) (v_\ep) \}^2 h(\gamma_\ep)^{-4} \, dt 
\ge 
\ep \int_{B_{c/2}(\tau)} h(\gamma_\ep)^{-4} \, dt 
\ge (c\ep)^{-1/3} \biggr(\int_{B_{c/2}(\tau)} \ep h(\gamma_\ep)^{-3} \, dt \biggr)^{4/3}. 
\]
The second inequality follows from H\"{o}lder.
Since $\tau \in \supp \mu$, 
\[ 
\liminf_{\ep \to 0} \int_{B_{c/2}(\tau)} \ep h(\gamma_\ep)^{-3} \, dt  \ge  \mu(B_{c/2}(\tau))/2 >0.
\]
Hence $\lim_{\ep \to 0} \ep \int_0^1 \{ dh(\gamma_\ep) (v_\ep) \}^2 h(\gamma_\ep)^{-4} \, dt = \infty$, 
thus we have proved (\ref{eq:Hessian_infty}).
\end{proof}

For $q \in \partial Q$, let $\nu(q)$ denote the unit vector which is outer normal to $\partial Q$ at $q$. 

\begin{lem}\label{lem:weakbilliard}
For any $v \in W^{1,2}([0,1], \gamma_0^*(TQ))$, there holds
\[
\int_0^1 \langle \dot{\gamma}_0, \nabla_t v \rangle \, dt  = \int_0^1 \langle \nu(\gamma_0), v  \rangle \, d \mu(t). 
\]
Notice that RHS is well-defined, since $\supp \mu \subset \gamma_0^{-1}(\partial Q)$.
\end{lem}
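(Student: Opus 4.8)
The goal is to pass to the limit $\ep\to0$ in the Euler--Lagrange identity $d\mca{L}_\ep(\gamma_\ep)=0$ and identify the limiting object. The plan is to start from the weak formulation: for every $v\in W^{1,2}([0,1],\gamma_\ep^*(TQ))$ one has
\[
\int_0^1 \langle \dot\gamma_\ep, \nabla_t v\rangle\, dt = -\int_0^1 \ep\,\langle \nabla U(\gamma_\ep), v\rangle\, dt,
\]
using $\dot\gamma_\ep(0)=\dot\gamma_\ep(1)=0$. Recalling $U=h^{-2}-(2\delta)^{-2}$, we have $\ep\nabla U(\gamma_\ep)=-2\ep\,\nabla h(\gamma_\ep)\,h(\gamma_\ep)^{-3}$, so the right-hand side becomes $\int_0^1 \langle 2\ep\,\nabla h(\gamma_\ep)\,h(\gamma_\ep)^{-3}, v\rangle\, dt$. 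A technical point is that a vector field $v$ along $\gamma_0$ must be transported to a vector field along $\gamma_\ep$; since $\gamma_\ep\to\gamma_0$ in $W^{1,2}$ (hence $C^0$), one can do this for $\ep$ small using the exponential map / parallel transport along short geodesics from $\gamma_0(t)$ to $\gamma_\ep(t)$, and the resulting fields converge appropriately. I would first dispose of this transport issue, then take $\ep\to0$ in both sides.

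For the left-hand side, $\dot\gamma_\ep\to\dot\gamma_0$ in $L^2$ and $\nabla_t v_\ep\to\nabla_t v$ in $L^2$ (by the $C^0$-convergence of $\gamma_\ep$ and the continuity of the Christoffel symbols), so $\int_0^1\langle\dot\gamma_\ep,\nabla_t v_\ep\rangle\,dt\to\int_0^1\langle\dot\gamma_0,\nabla_t v\rangle\,dt$. For the right-hand side I would split according to whether $h(\gamma_\ep)<\delta$: on the set where $h(\gamma_\ep)<\delta$ we have $\nabla h(\gamma_\ep)=-\nabla d(\gamma_\ep)$, which converges (as $\gamma_\ep\to\gamma_0$ and $d$ is smooth near $\partial Q$) to $-\nabla d(\gamma_0)=\nu(\gamma_0)$ wherever $\gamma_0(t)\in\partial Q$; on the complementary set, $\ep h(\gamma_\ep)^{-3}\to 0$ uniformly on compacta away from $\gamma_0^{-1}(\partial Q)$, and on $\supp\mu\subset\gamma_0^{-1}(\partial Q)$ we already have $d(\gamma_0(t))<\delta$ for $t$ near $\supp\mu$, so $h(\gamma_\ep(t))<\delta$ for small $\ep$. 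Thus up to an error that vanishes in the limit, the measure $2\ep h(\gamma_\ep)^{-3}\,dt$ converges weakly to $\mu$ (supported on $\gamma_0^{-1}(\partial Q)$) while $\nabla h(\gamma_\ep)$ converges to $\nu(\gamma_0)$ $\mu$-almost everywhere, giving
\[
\lim_{\ep\to0}\int_0^1 \langle 2\ep\,\nabla h(\gamma_\ep)\,h(\gamma_\ep)^{-3}, v_\ep\rangle\, dt = \int_0^1 \langle \nu(\gamma_0), v\rangle\, d\mu(t),
\]
which, together with the sign from $\ep\nabla U=-2\ep\nabla h\,h^{-3}$ and the minus on the right-hand side of the weak EL equation, yields the asserted identity (the two minus signs cancel).

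The main obstacle I expect is the justification of passing the product $\langle \text{(something converging only weakly-}\ast\text{ as a measure)},\ \text{(something converging strongly)}\rangle$ to the limit, i.e.\ showing $\int f_\ep\,d\mu_\ep\to\int f\,d\mu$ when $\mu_\ep=2\ep h(\gamma_\ep)^{-3}\,dt\rightharpoonup\mu$ and $f_\ep=\langle\nabla h(\gamma_\ep),v_\ep\rangle\to f=\langle\nu(\gamma_0),v\rangle$ uniformly on a neighborhood of $\supp\mu$. Weak-$\ast$ convergence of measures against a fixed continuous test function is immediate, but here the test function $f_\ep$ itself varies; the standard fix is the estimate $|\int f_\ep\,d\mu_\ep-\int f\,d\mu|\le \|f_\ep-f\|_{C^0}\,\mu_\ep([0,1]) + |\int f\,d\mu_\ep-\int f\,d\mu|$, where the first term is controlled by the uniform mass bound $\sup_\ep\mu_\ep([0,1])<\infty$ from Corollary~\ref{cor:limepU} together with $\|f_\ep-f\|_{C^0}\to0$, and the second by plain weak-$\ast$ convergence. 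One must also check that the contribution of the region $\{h(\gamma_\ep)\ge\delta\}$ genuinely disappears: there $2\ep h(\gamma_\ep)^{-3}$ is bounded by $2\ep\delta^{-3}\to0$ uniformly, so that part of the integral is $O(\ep)\|v_\ep\|_{C^0}\to0$. Assembling these pieces gives the identity in the statement.
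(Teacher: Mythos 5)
Your proposal follows exactly the paper's proof: transport $v$ to a field $v_\ep$ along $\gamma_\ep$ converging in $W^{1,2}$, write the Euler--Lagrange identity for $\gamma_\ep$ weakly (equivalently, integrate $\ddot\gamma_\ep=-\ep\nabla U(\gamma_\ep)$ by parts against $v_\ep$ using $\dot\gamma_\ep(0)=\dot\gamma_\ep(1)=0$), and pass to the limit using the uniform mass bound on $2\ep h(\gamma_\ep)^{-3}\,dt$ and its weak convergence to $\mu$; the paper states all of this in three lines, and your extra detail on the varying test function is exactly the right justification. One bookkeeping caveat: you have two sign slips that cancel. The weak form of criticality is $\int_0^1\langle\dot\gamma_\ep,\nabla_t v_\ep\rangle\,dt=+\int_0^1\ep\langle\nabla U(\gamma_\ep),v_\ep\rangle\,dt$ (no minus: $d\mca{L}_{\ep U}(\gamma)(\zeta)=\int\langle\dot\gamma,\nabla_t\zeta\rangle-\ep\,dU(\zeta)\,dt$), and on $\{h<\delta\}$ one has $\nabla h=+\nabla d=-\nu$, not $-\nabla d$ (the outer normal is $-\nabla d$ since $d=\dist(\cdot,\partial Q)$ increases inward). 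With these corrected, $\ep\nabla U(\gamma_\ep)=-2\ep\nabla h(\gamma_\ep)h(\gamma_\ep)^{-3}\to \nu(\gamma_0)\,d\mu$ and the identity comes out with the stated sign; as written, your two errors compensate and the conclusion is still correct.
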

\begin{proof}
One can take $v_\ep \in T_{\gamma_\ep} \Lambda$ so that $v_\ep \to v$ as $\ep \to 0$, in $W^{1,2}$-norm. 
By $\ddot{\gamma}_\ep + \ep \nabla U(\gamma_\ep) \equiv 0$ and $\dot{\gamma}_\ep(0) = \dot{\gamma}_\ep(1)=0$, we get 
\[
\int_0^1 \langle \ep \nabla U(\gamma_\ep), v_\ep(t) \rangle \, dt = - \int_0^1 \langle \ddot{\gamma}_\ep(t), v_\ep(t) \rangle \,dt = \int_0^1 \langle \dot{\gamma}_\ep(t), \nabla_t(v_\ep(t)) \rangle \, dt.
\]
As $\ep \to 0$, RHS goes to $\int_0^1 \langle \dot{\gamma}_0, \nabla_t v \rangle \, dt$. 
On the other hand, since $\nabla U(q) = -2 \nabla h(q) h(q)^{-3}$, LHS goes to $\int_0^1  \langle \nu(\gamma_0 ), v \rangle \, d \mu(t)$ as $\ep \to 0$. 
\end{proof} 

Lemma \ref{lem:weakbilliard} shows that $\ddot{\gamma}_0 \equiv 0$ on $[0,1] \setminus \supp \mu$. 
Lemma \ref{lem:supp_mu} shows that $\supp \mu$ is discrete. 
Hence $\dot{\gamma}_0^-(t)= \lim_{h \to 0-} \dot{\gamma}_0(t+h)$ exists for any $t>0$, and 
$\dot{\gamma}_0^+(t)=\lim_{h \to 0+} \dot{\gamma}_0(t+h)$ exists for any $t<1$.
Now we show that $\gamma_0$ satisfies the following properties: 
\begin{itemize}
\item $\len (\gamma_0) \in [\sqrt{2a}, \sqrt{2b}]$. 
\item $\{0,1\} \subset \supp \mu$. Moreover, $\dot{\gamma}_0(0)$, $\dot{\gamma}_0(1)$ are perpendicular to $\partial Q$. 
\item $\gamma_0$ satisfies the law of reflection at every point on $\supp \mu \setminus \{0,1\}$. 
\end{itemize}
Once these properties are confirmed, $\gamma_0$ is a brake billiard trajectory with $\mca{B}_{\gamma_0} = \supp \mu \setminus \{0,1\}$, 
and Proposition \ref{prop:limit} (i) is proved.

Let $I$ be any interval on $[0,1]$. By Lemma \ref{lem:limepU}, 
\[
\int_I  | \dot{\gamma}_0|^2 \, dt = \lim_{\ep \to 0} \int_I | \dot{\gamma}_\ep|^2 \, dt = \lim_{\ep \to 0} 2\biggl(|I| E(\gamma_\ep) - \int_I \ep U(\gamma_\ep) \, dt \biggr) = 2 |I| \lim_{\ep \to 0} E(\gamma_\ep).
\]
Hence $E:= \lim_{\ep \to 0} E(\gamma_\ep)$ exists, and there holds $|\dot{\gamma}_0(t)| = \sqrt{2E}$ for any $t \notin \supp \mu$. 
Then, $\len (\gamma_0) \in [\sqrt{2a}, \sqrt{2b}]$ follows from: 
\[
E=\lim_{\ep \to 0} E(\gamma_\ep) = \lim_{\ep \to 0} \mca{L}_\ep(\gamma_\ep) + 2 \int_0^1 \ep U(\gamma_\ep) \, dt = \lim_{\ep \to 0} \mca{L}_\ep(\gamma_\ep)  \in [a,b].
\]

Let us prove that $0 \in \supp \mu$. 
If not, there exists $c>0$ such that $\mu \equiv 0$ on $[0,c]$. 
Take $f \in C^\infty([0,1])$ such that $f(0)=1$ and $\supp f \subset [0,c]$. 
Let $v(t):= f(t) \dot{\gamma}_0(t)$. 
Then, Lemma \ref{lem:weakbilliard} implies 
\[
0 = \int_0^1 \langle \dot{\gamma}_0, \nabla_t v \rangle \, dt = \int_0^1 f'(t) |\dot{\gamma}_0(t)|^2 \, dt = -2E. 
\]
This contradicts $E \in [a,b]$ and $a>0$, 
hence $0 \in \supp \mu$. We can show $1 \in \supp \mu$ by same arguments. 

Let us prove that $\dot{\gamma}_0(0)$ is perpendicular to $\partial Q$.
Let $\zeta_0$ be any tangent vector of $\partial Q$ at $\gamma_0(0)$.
Take $c>0$ sufficiently small so that $[0,c] \cap \supp \mu = \{0\}$, and define $\zeta(t) \in T_{\gamma_0(t)}Q$ for any $0 \le t \le c$ by  
$\zeta(0)=\zeta_0$, and $\nabla_t \zeta \equiv 0$. Take $f \in C^\infty([0,1])$ as above, and set $v(t):= f(t) \zeta(t)$. 
Then Lemma \ref{lem:weakbilliard} implies 
\[
\mu(\{0\}) \langle \nu(\gamma_0(0)), \zeta_0 \rangle = \int_0^1 \langle \nu(\gamma_0), v  \rangle \, d\mu(t)  = \int_0^1 \langle \dot{\gamma}_0, \nabla_t v \rangle dt = - \langle \zeta_0, \dot{\gamma}_0(0) \rangle. 
\]
Since $\zeta_0$ is tangent to $\partial Q$, LHS is zero, thus $\langle \zeta_0, \dot{\gamma}_0(0) \rangle =0$. 
This shows that $\dot{\gamma}_0(0)$ is perpendicular to $\partial Q$. 

Finally, let us prove that $\gamma_0$ satisfies the law of reflection at any $t \in \supp \mu \setminus \{0,1\}$. 
Similar arguments as above show that $\dot{\gamma}_0^+(t) - \dot{\gamma}_0^-(t)$ is nonzero and perpendicular to $\partial Q$. 
On the other hand, $|\dot{\gamma}_0^+(t)|=|\dot{\gamma}_0^-(t)|$, since both are equal to $\sqrt{2E}$. 
Then it is immediate that $\gamma_0$ satisfies the law of reflection at $t$.

We have now finished the proof of Proposition \ref{prop:limit} (i). 
As we explained at the beginning of this section, (ii) can be proved by parallel arguments. 

\section{Proof of Theorem \ref{mainthm}}
In this section, we complete the proof of Theorem \ref{mainthm}. 
We only prove (i), since (ii) can be proved by parallel arguments. 
We may assume that $Q$ is connected and $\partial Q \ne \emptyset$ (see Remark \ref{rem:closed}). 
First we need the following technical lemma. Let us denote
\[
\Lambda^c(\interior Q):= \Lambda^c(Q) \cap \Lambda(\interior Q), \qquad
\Lambda_\delta(\interior Q):= \Lambda_\delta(Q) \cap \Lambda(\interior Q).
\]

\begin{lem}\label{lem:QandintQ}
For any $c \in \R$ and $\delta>0$, there holds 
\[
H_*(\Lambda^c(Q) \cup \Lambda_\delta(Q), \Lambda^c(\interior Q) \cup \Lambda_\delta(\interior Q)) =0.
\]
\end{lem}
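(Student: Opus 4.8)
The goal is to show $H_*(\Lambda^c(Q)\cup\Lambda_\delta(Q),\Lambda^c(\interior Q)\cup\Lambda_\delta(\interior Q))=0$. The natural strategy is to build a deformation retraction of the pair $(\Lambda^c(Q)\cup\Lambda_\delta(Q),\,\Lambda^c(\interior Q)\cup\Lambda_\delta(\interior Q))$ onto itself that pushes everything into the subspace, i.e. to produce a homotopy $r_s$ of $\Lambda^c(Q)\cup\Lambda_\delta(Q)$ which is the identity at $s=0$, maps the whole space into $\Lambda^c(\interior Q)\cup\Lambda_\delta(\interior Q)$ at $s=1$, and preserves the subspace for all $s$. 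If such a deformation exists, then the inclusion of the subspace into the total space is a homotopy equivalence of pairs in the appropriate sense and the relative homology vanishes.

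Here is the plan in more detail. First I would fix a small collar neighborhood of $\partial Q$ and a smooth inward-pointing vector field, or more simply use the function $d(q)=\dist(q,\partial Q)$ near the boundary. The point is that any loop $\gamma\in\Lambda^c(Q)$ can be pushed slightly into the interior: define $\gamma_s$ by flowing each point $\gamma(t)$ inward by an amount that is $O(s)$ but tapers off away from the boundary, so that the $W^{1,2}$-energy changes only by $O(s)$ and the image lands in $\interior Q$ for $s>0$. The subtlety is that we must not increase energy past $c$ — but since the perturbation is arbitrarily small in $W^{1,2}$ and $\Lambda^c$ is open (defined by strict inequality $\mca{E}<c$), for each $\gamma$ with $\mca{E}(\gamma)<c$ a sufficiently small push keeps energy below $c$. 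The second subtlety: we must handle $\gamma\in\Lambda_\delta(Q)$, i.e. curves coming within $\delta$ of the boundary; pushing them inward could in principle move them out of $\Lambda_\delta$. This is where one exploits that the push is localized near $\partial Q$ and small: a curve with $\dist(\gamma([0,1]),\partial Q)<\delta$ still satisfies this after an $O(s)$ inward push with $s$ small, because $\delta$ is a strict bound. So the deformation can be arranged to preserve both $\Lambda^c(Q)$ (mapping it eventually into $\Lambda^c(\interior Q)$) and $\Lambda_\delta(Q)$ (mapping it into $\Lambda_\delta(\interior Q)$), hence it preserves the union and carries it into $\Lambda^c(\interior Q)\cup\Lambda_\delta(\interior Q)$.

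The cleanest way to organize this is probably not a single global time-$1$ retraction but rather to show that the inclusion $\Lambda^c(\interior Q)\cup\Lambda_\delta(\interior Q)\hookrightarrow \Lambda^c(Q)\cup\Lambda_\delta(Q)$ induces an isomorphism on homology by a direct limit / continuity argument: write $\Lambda^c(\interior Q)\cup\Lambda_\delta(\interior Q)=\bigcup_{s>0}$ (image of the $s$-pushed space), observe each such image is open and the family is increasing as $s\to 0$, and that $\Lambda^c(Q)\cup\Lambda_\delta(Q)$ deformation retracts onto each member. Alternatively, and more simply for the relative group: given a relative cycle in $(\Lambda^c(Q)\cup\Lambda_\delta(Q),\Lambda^c(\interior Q)\cup\Lambda_\delta(\interior Q))$, which is a compact subset of the total space, apply the inward push with $s$ small enough (uniformly over the compact image of the cycle) to deform the cycle into the subspace rel its boundary; this shows every relative homology class is zero. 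Compactness of singular chains is what makes "sufficiently small $s$" work uniformly, circumventing the need to choose $s$ continuously in $\gamma$.

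The main obstacle I anticipate is making the inward-pushing map genuinely continuous as a map on $\Lambda^c(Q)\cup\Lambda_\delta(Q)$ (with its $W^{1,2}$-topology, or the metric $d_\Lambda$ on $\Lambda(Q)$) while simultaneously controlling energy and the distance-to-boundary condition — in particular near curves that touch $\partial Q$, where $d$ may fail to be smooth and where the naive "flow inward by $s$" is only Lipschitz, not smooth. Using the smoothed function $h_\delta=\delta\rho(d/\delta)$ (already introduced in the paper) and a cutoff supported where $d<3\delta$ should give a map that is smooth enough in the $Q$-variable and depends continuously on $\gamma$ in $W^{1,2}$; one then checks the energy estimate $\mca{E}(\gamma_s)\le\mca{E}(\gamma)+C(s+s\|\dot\gamma\|_{L^2})$ or similar, which on energy sublevel sets gives the needed uniform control. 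Once continuity plus these two estimates are in hand, the vanishing of the relative homology follows formally.
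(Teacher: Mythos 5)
Your strategy is correct in outline and is in the same spirit as the paper's proof (push curves into $\interior Q$ by an inward isotopy), and your chain-level compactness argument does legitimately close the uniformity gaps you identify: the image of a relative cycle is compact, the functions $\gamma\mapsto\mca{E}(\gamma)$ and $\gamma\mapsto\dist(\gamma([0,1]),\partial Q)$ are continuous, so a single small $s$ works for the whole cycle and the prism operator kills its class. But you are working harder than necessary, and the extra work comes from not exploiting the union structure $\Lambda^c\cup\Lambda_\delta$. The paper chooses the inward vector field $Z$ to point strictly inwards on $\partial Q$ and to vanish identically on $Q(\delta)$, and lets $\psi^t$ be its flow. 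Then two observations make every estimate evaporate: (a) a curve $\gamma$ with $\dist(\gamma([0,1]),\partial Q)\ge\delta$ lies entirely in $Q(\delta)$, where $Z\equiv 0$, so it is fixed by $\psi^t$ and in particular its energy is untouched --- thus the only curves that move at all are already in $\Lambda_\delta(Q)$; and (b) by uniqueness of integral curves, a trajectory starting in $Q\setminus Q(\delta)$ can never reach the stationary set $Q(\delta)$, so $\psi^t$ preserves $\Lambda_\delta$; combined with $\psi^1(Q)\subset\interior Q$ this shows $\gamma\mapsto\psi^1\circ\gamma$ maps $\Lambda^c(Q)\cup\Lambda_\delta(Q)$ into $\Lambda^c(\interior Q)\cup\Lambda_\delta(\interior Q)$ and is a genuine homotopy inverse of the inclusion, with no energy estimate, no $\gamma$-dependent choice of $s$, and no compactness argument. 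Your second alternative (the ``direct limit of $s$-pushed images'' version) is the weaker of your two options --- those images need not be open and the claimed deformation retractions are not obvious --- so if you keep your route, commit to the cycle-by-cycle version; but the cleaner fix is to localize the push to the collar $Q\setminus Q(\delta)$ as above.
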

\begin{proof}
It is enough to show that the inclusion 
\begin{equation}\label{eq:QandintQ}
\Lambda^c(\interior Q) \cup \Lambda_\delta(\interior Q) \to
\Lambda^c(Q) \cup \Lambda_\delta(Q)
\end{equation}
is a homotopy equivalence. 
Let $Z$ be a smooth vector field on $Q$, which points strictly inwards on $\partial Q$, and $Z \equiv 0$ on $Q(\delta)$. 
Let $(\psi^t)_{t \ge 0}$ be the isotopy generated by $Z$, i.e. $\psi^0=\id_Q$, $\partial_t \psi^t = Z(\psi^t)$. 
Then, it is easy to show that
\[
\Lambda^c(Q) \cup \Lambda_\delta(Q) \to 
\Lambda^c(\interior Q) \cup \Lambda_\delta(\interior Q): \quad \gamma \mapsto \psi^1 \circ \gamma 
\]
is a homotopy inverse of (\ref{eq:QandintQ}). 
\end{proof} 

By Lemma \ref{lem:QandintQ}, the assumption of Theorem \ref{mainthm} (i) is equivalent to 
\[
\plim_{\delta \to 0} H_j( \Lambda^b(\interior Q) \cup \Lambda_\delta (\interior Q), \Lambda^a (\interior Q) \cup \Lambda_\delta (\interior Q)) \ne 0.
\]
In this section, we abbreviate $\Lambda^b(\interior Q)$ by $\Lambda^b$, $\Lambda_\delta(\interior Q)$ by $\Lambda_\delta$, and so on. 
There exists $\delta_0>0$ such that 
\begin{equation}\label{eq:limdeltatodelta0}
\plim_{\delta \to 0} H_j(\Lambda^b \cup \Lambda_\delta, \Lambda^a \cup \Lambda_\delta) \to
H_j(\Lambda^b \cup \Lambda_{\delta_0}, \Lambda^a \cup \Lambda_{\delta_0})
\end{equation}
is nonzero. We take $\delta_1>0$ so that $3\delta_1 \le \delta_0$. 
We are going to prove 
\[
H_j( \{ \mca{L}^\Lambda_{\ep U_{\delta_1} }<b\}, \{ \mca{L}^\Lambda_{\ep U_{\delta_1}} < a \}) \ne 0
\]
for any $\ep>0$. Once we prove this, 
Proposition \ref{prop:Morsetheory} and 
Proposition \ref{prop:limit} show that there exists a brake billiard trajectory $\gamma$ such that $\sharp \mca{B}_\gamma \le j-2$ and 
$\len(\gamma) \in [\sqrt{2a}, \sqrt{2b}]$.

We fix $\ep>0$. 
For any $c>0$ there holds 
$\{ \mca{L}^\Lambda_{\ep U_{\delta_1}} < c\}  \subset \Lambda^c \cup \Lambda_{\delta_0}$, since $U_{\delta_1} \equiv 0$ on $Q(\delta_0)$. 
On the other hand, Lemma \ref{lem:boundary} shows that, for sufficiently small $\delta_2>0$ there holds 
$\Lambda^b \cap \Lambda_{\delta_2} \subset \{ \mca{L}^\Lambda_{\ep U_{\delta_1}} < a \}$. 
Thus we have the following commutative diagram, all homomorphisms are induced by inclusions: 
\[
\xymatrix{
H_j (\Lambda^b , \Lambda^a \cup (\Lambda^b \cap \Lambda_{\delta_2})) \ar[r]\ar[d] & H_j( \{ \mca{L}^\Lambda_{\ep U_{\delta_1}} <b\}, \{ \mca{L}^\Lambda_{\ep U_{\delta_1}} < a\} ) \ar[d] \\
H_j (\Lambda^b \cup \Lambda_{\delta_2}, \Lambda^a \cup \Lambda_{\delta_2} ) \ar[r] & H_j(\Lambda^b \cup \Lambda_{\delta_0}, \Lambda^a \cup \Lambda_{\delta_0}).
}
\]
Since (\ref{eq:limdeltatodelta0}) is nonzero, the bottom arrow is nonzero. 
On the other hand, the excision property shows that the left vertical arrow is an isomorphism.
By commutativity of the diagram, we have 
$H_j( \{ \mca{L}^\Lambda_{\ep U_{\delta_1}} <b\}, \{ \mca{L}^\Lambda_{\ep U_{\delta_1}} < a\} )  \ne 0$, 
and this completes the proof.

\section{Short billiard trajectory} 

In this section, we prove Theorem \ref{thm:short}. 
In Section 5.1, we introduce the notion of capacity for Riemannian manifolds with boundaries, and 
show that the capacity detects the length of a billiard trajectory
(Lemma \ref{lem:spectrality}). 
In Section 5.2, we bound the capacity by the inradius, and complete the proof of Theorem \ref{thm:short}. 
In Section 5.3, we prove Corollary \ref{cor:gdloop} as a consequence of Theorem \ref{thm:short}. 

\subsection{Capacity} 
First we introduce some notations. 
\begin{itemize}
\item We define $\Lambda_\partial (Q) \subset \Lambda(Q)$, $\Omega_\partial (Q) \subset \Omega(Q)$ by 
\[
\Lambda_\partial(Q):= \{ \gamma \in \Lambda(Q) \mid \gamma([0,1]) \cap \partial Q \ne \emptyset \}, \quad
\Omega_\partial(Q):=  \{ \gamma \in \Omega(Q) \mid \gamma(S^1) \cap \partial Q \ne \emptyset \}.
\]
\item For each $q \in Q$, $p_q$ denotes the constant path at $q$, and $l_q$ denotes the constant loop at $q$. 
\end{itemize} 
We often identify $q \in Q$ with $p_q$ and $l_q$, thus have inclusions $Q \to \Lambda(Q)$, $Q \to \Omega(Q)$. 
For each $a>0$, we consider the following homomorphisms, all induced by inclusions. 
\begin{align*}
I^{\Lambda,a}_0&: H_*(Q,\partial Q) \to H_*(\Lambda^a (Q) \cup \Lambda_\partial (Q), \Lambda_\partial (Q)), \\
I^{\Lambda,a}_1&: H_*(Q,\partial Q) \to \plim_{\delta \to 0} H_*(\Lambda^a(Q) \cup \Lambda_\delta(Q), \Lambda_\delta(Q)), \\
I^{\Lambda,a}_2&: H_*(Q,\partial Q) \cong \plim_{\delta \to 0} H_*(\interior Q, \interior Q \setminus Q(\delta)) \to \plim_{\delta \to 0} H_*(\Lambda^a(\interior Q) \cup \Lambda_\delta(\interior Q), \Lambda_\delta(\interior Q)).
\end{align*}
One can define $I^{\Omega,a}_0$, $I^{\Omega,a}_1$, $I^{\Omega,a}_2$ in the same manner. 

\begin{lem}\label{lem:capacity} 
For any $\alpha \in H_*(Q,\partial Q)$ and $j=0,1,2$, let us define 
\[
c^\Lambda_j(\alpha):= \inf \{ c > 0 \mid I^{\Lambda,c^2/2}_j(\alpha)=0 \}.
\]
Then $c^\Lambda_0(\alpha)=c^\Lambda_1(\alpha)=c^\Lambda_2(\alpha)$. 
\end{lem}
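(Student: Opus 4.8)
The plan is to prove the three quantities coincide by establishing that the relevant maps $I^{\Lambda,a}_0$, $I^{\Lambda,a}_1$, $I^{\Lambda,a}_2$ are related by commutative diagrams whose horizontal arrows are isomorphisms, so that one of them vanishes exactly when the others do. Concretely, I would first compare $I^{\Lambda,a}_1$ and $I^{\Lambda,a}_2$: by Lemma \ref{lem:QandintQ} the inclusion $\Lambda^a(\interior Q)\cup\Lambda_\delta(\interior Q)\to \Lambda^a(Q)\cup\Lambda_\delta(Q)$ induces an isomorphism on homology (relative to $\Lambda_\delta$, after a small diagram chase using that the pair $(\Lambda_\delta(Q),\Lambda_\delta(\interior Q))$ also has vanishing relative homology by the same retraction argument), compatibly with the inverse limit over $\delta$ and with the maps from $H_*(Q,\partial Q)\cong\varprojlim_\delta H_*(\interior Q,\interior Q\setminus Q(\delta))$. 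Hence $I^{\Lambda,a}_1(\alpha)=0 \iff I^{\Lambda,a}_2(\alpha)=0$ for all $a$, giving $c^\Lambda_1=c^\Lambda_2$.

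Next I would compare $I^{\Lambda,a}_0$ and $I^{\Lambda,a}_1$. The key observation is that $\Lambda_\partial(Q)=\bigcap_{\delta>0}\Lambda_\delta(Q)$ and that the $\Lambda_\delta(Q)$ deformation-retract onto $\Lambda_\partial(Q)$ in a controlled way (using an inward-pushing isotopy as in the proof of Lemma \ref{lem:QandintQ}, or rather its outward analogue), so that the natural map $H_*(\Lambda^a(Q)\cup\Lambda_\partial(Q),\Lambda_\partial(Q))\to \varprojlim_\delta H_*(\Lambda^a(Q)\cup\Lambda_\delta(Q),\Lambda_\delta(Q))$ is an isomorphism. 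One has to be a little careful: an inverse limit of isomorphisms need not be an isomorphism of the limit unless one checks a Mittag-Leffler/compatibility condition, so I would phrase this as: the maps $\Lambda_\delta(Q)\to\Lambda_\partial(Q)$ for varying $\delta$ fit into a homotopy-coherent system, each relative homology map $H_*(\Lambda^a\cup\Lambda_{\delta'},\Lambda_{\delta'})\to H_*(\Lambda^a\cup\Lambda_\delta,\Lambda_\delta)$ for $\delta'<\delta$ is an isomorphism, and both sides compare isomorphically to the pair built from $\Lambda_\partial$. Granting this, $I^{\Lambda,a}_0(\alpha)=0\iff I^{\Lambda,a}_1(\alpha)=0$, so $c^\Lambda_0=c^\Lambda_1$, and combined with the previous paragraph this finishes the lemma.

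The main obstacle I anticipate is the homotopy-theoretic bookkeeping with the inverse limits — in particular, justifying that $\varprojlim_\delta H_*(\Lambda^a\cup\Lambda_\delta,\Lambda_\delta)$ is computed by the "limit space" pair involving $\Lambda_\partial$, since inverse limits are not exact and a naive argument can fail. I would handle this by exhibiting, for each small $\delta$, an explicit strong deformation retraction of $\Lambda^a(Q)\cup\Lambda_\delta(Q)$ onto $\Lambda^a(Q)\cup\Lambda_{\delta'}(Q)$ (respectively onto $\Lambda^a(Q)\cup\Lambda_\partial(Q)$) given by flowing along an inward vector field $Z$ supported near $\partial Q$ with $Z\equiv 0$ on $Q(\delta')$, exactly as in Lemma \ref{lem:QandintQ}; energy is non-increasing under such a flow because it only moves curves that already lie near the boundary, so $\Lambda^a$ is preserved. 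This makes every transition map in the inverse system an isomorphism with explicit inverses, so the structure maps of the inverse system are isomorphisms and the limit agrees with each term and with the $\Lambda_\partial$-pair; the comparison with $H_*(Q,\partial Q)$ via constant paths is then immediate from naturality, since $p_q\in\Lambda_\delta(Q)$ iff $q\in Q\setminus Q(\delta)$, so the inclusion $Q\hookrightarrow\Lambda(Q)$ sends the pair $(Q,\partial Q)$ (or $(Q,Q\setminus Q(\delta))$) into the corresponding loop-space pairs compatibly with all the retractions above.
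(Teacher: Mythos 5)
Your first step ($c^\Lambda_1=c^\Lambda_2$ via Lemma \ref{lem:QandintQ}) and the easy inequality $c^\Lambda_1\le c^\Lambda_0$ (which follows from the natural map $H_*(\Lambda^a(Q)\cup\Lambda_\partial(Q),\Lambda_\partial(Q))\to\varprojlim_\delta H_*(\Lambda^a(Q)\cup\Lambda_\delta(Q),\Lambda_\delta(Q))$, through which $I^{\Lambda,a}_1$ factors) are fine and agree with the paper. The gap is in your comparison of $I^{\Lambda,a}_0$ with $I^{\Lambda,a}_1$ \emph{at a fixed level $a$}. You claim a strong deformation retraction of $\Lambda^a(Q)\cup\Lambda_\delta(Q)$ onto $\Lambda^a(Q)\cup\Lambda_\partial(Q)$ (or onto $\Lambda^a(Q)\cup\Lambda_{\delta'}(Q)$) which preserves $\Lambda^a$, asserting that ``energy is non-increasing because it only moves curves near the boundary.'' This fails for two reasons. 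First, the direction: to decrease $\dist(\gamma,\partial Q)$ you must push \emph{outward}, toward $\partial Q$, not along an inward field as in Lemma \ref{lem:QandintQ} (you half-notice this). Second, and fatally, any continuous map of $Q$ that collapses the collar $Q\setminus Q(\delta)$ onto $\partial Q$ while fixing $\partial Q$ and being the identity deeper inside must stretch the normal direction somewhere in the transition region (it sends an interval of width $<c$ onto one of width $c$), so it strictly increases the energy of some curves. Hence it does not map $\Lambda^a$ into $\Lambda^a$, the transition maps of your inverse system are not seen to be isomorphisms, and the levelwise equivalence $I^{\Lambda,a}_0(\alpha)=0\iff I^{\Lambda,a}_1(\alpha)=0$ is not established (the lemma does not even assert it).

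The paper's proof avoids this by accepting a controlled energy loss: it constructs $\psi_1$ with $\psi_1(Q\setminus Q(\delta))=\partial Q$, $\psi_1|_{\partial Q}=\id$, and $|d\psi_1(\xi)|\le\sqrt{a/a'}\,|\xi|$ for a chosen $a'<a$ (possible once $\delta$ is small enough). Then $\gamma\mapsto\psi_1\circ\gamma$ sends the pair $(\Lambda^{a'}(Q)\cup\Lambda_\delta(Q),\Lambda_\delta(Q))$ into $(\Lambda^{a}(Q)\cup\Lambda_\partial(Q),\Lambda_\partial(Q))$ compatibly with the maps from $H_*(Q,\partial Q)$, so $I^{\Lambda,a'}_1(\alpha)=0$ implies $I^{\Lambda,a}_0(\alpha)=0$ for every $a>a'$. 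Letting $a\downarrow a'$ gives $c^\Lambda_0(\alpha)\le c^\Lambda_1(\alpha)$ as an equality of \emph{infima}, which is all the lemma claims. You should replace your retraction argument by this two-level comparison; the inverse-limit bookkeeping you worry about then disappears, since only the easy factorization $I^{\Lambda,a}_1=\iota\circ I^{\Lambda,a}_0$ and the single map $(\psi_1)_*$ are needed.
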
 
\begin{proof}
$c^\Lambda_1(\alpha)= c^\Lambda_2(\alpha)$ is immediate from Lemma \ref{lem:QandintQ}.
$c^\Lambda_1(\alpha) \le c^\Lambda_0(\alpha)$ is also clear, since there exists a natural homomorphism 
\[
H_*(\Lambda^a(Q) \cup \Lambda_\partial (Q), \Lambda_\partial (Q)) \to \plim_{\delta \to 0} H_*(\Lambda^a(Q) \cup \Lambda_\delta(Q), \Lambda_\delta(Q)), 
\]
which is induced by inclusions. 
Hence it is enough to prove $c^\Lambda_1(\alpha) \ge c^\Lambda_0(\alpha)$. 

Let $a>a'$ be any positive real numbers. 
When $\delta>0$ is sufficiently small, 
there exists a $C^\infty$ map $\psi: Q \times [0,1] \to Q; (x,t) \mapsto \psi_t(x)$ such that: 
\begin{itemize}
\item $\psi_0 = \id_Q$. $\psi_t|_{\partial Q} = \id_{\partial Q}$ for any $0 \le t \le 1$.  
\item $\psi_1 ( Q \setminus Q(\delta)) = \partial Q$. 
\item $|d\psi_1(\xi)| \le \sqrt{a/a'} |\xi|$ for any $\xi \in TQ$. 
\end{itemize} 
Then we have the following commutative diagram:
\[
\xymatrix{ 
H_*(Q,\partial Q) \ar[rr]\ar[rd]_{I^{\Lambda,a}_0}&& H_*(\Lambda^{a'}(Q) \cup \Lambda_\delta(Q), \Lambda_\delta(Q)) \ar[ld]^{(\psi_1)_*} \\
&H_*(\Lambda^a(Q) \cup \Lambda_\partial (Q) , \Lambda_\partial (Q)).&
}
\]
If $a'>c^\Lambda_1(\alpha)^2/2$, $\alpha \in H_*(Q,\partial Q)$ vanishes by the top arrow, hence $I^{\Lambda,a}_0(\alpha)=0$, 
thus $a>c^\Lambda_0(\alpha)^2/2$. 
Since we may take $a>a'$ arbitrarily, we have shown that $c^\Lambda_1(\alpha) \ge c^\Lambda_0(\alpha)$. 
\end{proof} 

For any $\alpha \in H_*(Q,\partial Q)$, we denote 
$c^\Lambda_0(\alpha)=c^\Lambda_1(\alpha)=c^\Lambda_2(\alpha)$ in Lemma \ref{lem:capacity} by $c^\Lambda(Q:\alpha)$. 
On the other hand, for $j=0,1,2$, we define 
\[
c^\Omega_j(\alpha):= \inf \{ c>0  \mid I^{\Omega,c^2/2}_j(\alpha)=0 \}.
\] 
By same arguments as in Lemma \ref{lem:capacity}, we can show that $c^\Omega_0(\alpha)=c^\Omega_1(\alpha)=c^\Omega_2(\alpha)$. 
We denote it by $c^\Omega(Q:\alpha)$. 
We call $c^\Lambda(Q:\alpha)$ and $c^\Omega(Q:\alpha)$ \textit{capacities} of $Q$. 

\begin{rem} 
The above definition of $c^\Lambda$ and $c^\Omega$ imitate the definition of the Floer-Hofer-Wysocki (FHW) capacity, which is due to \cite{FHW} (see also \cite{Ir}, Section 2.4). 
As a matter of fact, when $Q$ is a domain in the Euclidean space, $c^\Omega(Q: [Q,\partial Q])$ is equal to the 
FHW capacity of its disc cotangent bundle. 
See Corollary 1.4 in \cite{Ir}.
\end{rem} 

\begin{lem}\label{lem:nonvanishing}
For any $\alpha \in H_*(Q,\partial Q) \setminus \{0\}$, 
$c^\Lambda(Q:\alpha), c^\Omega(Q:\alpha)>0$. 
\end{lem}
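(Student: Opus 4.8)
The plan is to show that for any nonzero class $\alpha \in H_*(Q,\partial Q)$, the homomorphism $I^{\Lambda,a}_0(\alpha)$ (resp. $I^{\Omega,a}_0(\alpha)$) is nonzero for all sufficiently small $a>0$; by Lemma \ref{lem:capacity} and its $\Omega$-analogue this gives $c^\Lambda(Q:\alpha) = c^\Lambda_0(\alpha) > 0$ (resp. $c^\Omega(Q:\alpha) > 0$). We work with the $j=0$ description because the target there, $H_*(\Lambda^a(Q) \cup \Lambda_\partial(Q), \Lambda_\partial(Q))$, is an ordinary relative homology group with no inverse limit, so nonvanishing is easier to detect.

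The key observation is that for $a$ small, a loop (or path) of energy less than $a$ has very small diameter, so $\Lambda^a(Q) \cup \Lambda_\partial(Q)$ deformation retracts onto the subspace of short paths that either touch $\partial Q$ or are concentrated near a point. More precisely, I would first note the inclusion $Q \hookrightarrow \Lambda(Q)$ as constant paths, which sends $\partial Q$ into $\Lambda_\partial(Q)$ and thus induces $H_*(Q,\partial Q) \to H_*(\Lambda^a(Q) \cup \Lambda_\partial(Q), \Lambda_\partial(Q))$; this is exactly $I^{\Lambda,a}_0$. The claim is that this is injective (indeed split injective) for small $a$. To see this, construct a retraction: for $a$ small enough that every $\gamma$ with $\mca{E}(\gamma) < a$ has image of diameter less than the injectivity radius of a tubular neighborhood, the evaluation map $\gamma \mapsto \gamma(0)$ (or a barycenter-type map) gives a continuous map $\Lambda^a(Q) \cup \Lambda_\partial(Q) \to Q$ whose composition with $Q \hookrightarrow \Lambda(Q)$ is homotopic to the identity via the straight-line (geodesic) homotopy $\gamma_s(t) = \exp_{\gamma(0)}(s \cdot \exp_{\gamma(0)}^{-1}\gamma(t))$ or by linearly rescaling the time parameter. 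One must check this retraction carries $\Lambda_\partial(Q)$ into $\partial Q$: a short path meeting $\partial Q$ has $\gamma(0)$ close to $\partial Q$, so after composing with the inward isotopy $\psi_t$ used in Lemma \ref{lem:capacity} (or by pushing into a collar) one lands in $\partial Q$ up to homotopy. This yields a left inverse to $I^{\Lambda,a}_0(\alpha)$ at the level of the pairs $(Q,\partial Q)$, so $I^{\Lambda,a}_0$ is injective and hence $I^{\Lambda,a}_0(\alpha) \ne 0$ for $\alpha \ne 0$.

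For the loop case $\Omega$, the argument is identical with $S^1$ in place of $[0,1]$: a loop of small energy is $C^0$-small, evaluation at $0 \in S^1$ lands near a point, and the geodesic coning homotopy contracts it to the constant loop while keeping $\Omega_\partial(Q)$ near $\partial Q$. Combining with the diameter bound (energy $< a$ forces length $< \sqrt{2a}$ by Cauchy--Schwarz, so diameter $< \sqrt{2a}$), choosing $a$ smaller than, say, $(\iota/2)^2/2$ where $\iota$ is a uniform injectivity/collar radius suffices, giving $c^\Lambda(Q:\alpha), c^\Omega(Q:\alpha) \ge \sqrt{2a} > 0$.

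The main obstacle I anticipate is making the retraction genuinely continuous on the \emph{union} $\Lambda^a(Q) \cup \Lambda_\partial(Q)$ rather than on each piece separately, and in particular verifying that $\Lambda_\partial(Q)$ is carried into (a space homotopy equivalent to) $\partial Q$ compatibly with the coning homotopy on $\Lambda^a$. The cleanest fix is probably to first replace $\partial Q$ by a closed collar neighborhood $N$ of $\partial Q$ (homotopy equivalent to $\partial Q$), redefine $\Lambda_N(Q) := \{\gamma \mid \gamma([0,1]) \cap N \ne \emptyset\}$, observe the pairs $(\Lambda^a(Q) \cup \Lambda_\partial(Q), \Lambda_\partial(Q))$ and $(\Lambda^a(Q) \cup \Lambda_N(Q), \Lambda_N(Q))$ have the same homology for $a$ small, and then the barycenter/evaluation retraction respects $\Lambda_N(Q)$ automatically since a short path hitting $\partial Q$ lies entirely in $N$. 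Everything else is a routine compactness argument on $Q$ to extract the uniform radius.
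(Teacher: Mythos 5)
Your central idea---evaluate at $t=0$, use the Cauchy--Schwarz bound $\len(\gamma)\le\sqrt{2\mca{E}(\gamma)}<\sqrt{2a}$, and conclude via a collar that the composite back to $Q$ is an isomorphism for small $a$---is exactly the mechanism of the paper's proof. But your execution has a genuine gap, caused by your choice to work with $I^{\Lambda,a}_0$ and the subspace $\Lambda_\partial(Q)$. The pair is $(\Lambda^a(Q)\cup\Lambda_\partial(Q),\ \Lambda_\partial(Q))$, and $\Lambda_\partial(Q)$ carries \emph{no energy restriction}: it contains arbitrarily long paths that touch $\partial Q$, and every point of $Q$ is the initial point of such a path. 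Hence $\ev(\Lambda_\partial(Q))=Q$, and your evaluation/coning retraction is not a map of pairs into $(Q,N)$ for any proper neighborhood $N$ of $\partial Q$. Your anticipated fix does not repair this: $\Lambda_N(Q)$ likewise contains long paths, and the sentence ``a short path hitting $\partial Q$ lies entirely in $N$'' silently assumes that all paths in the subspace are short, which is false. A secondary issue: even if you wanted to excise the long paths out of $\Lambda_\partial(Q)$ first, excision is not directly available there because $\Lambda_\partial(Q)$ is a closed (not open) subset of the union.

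The paper avoids both problems by working with $I^{\Lambda,a}_2$ instead (legitimate by Lemma \ref{lem:capacity}, which you may cite): there the subspace is the \emph{open} set $\Lambda_\delta$, so excision applies and replaces the pair by $(\Lambda^a,\ \Lambda^a\cap\Lambda_\delta)$, in which every path has energy $<a$. Now your estimate does the work: any $\gamma\in\Lambda^a\cap\Lambda_\delta$ satisfies $\gamma(0)\in\interior Q\setminus Q(\delta+\sqrt{2a})$, so $\ev$ is a map of pairs $(\Lambda^a,\Lambda^a\cap\Lambda_\delta)\to(\interior Q,\interior Q\setminus Q(\delta+\sqrt{2a}))$, and since $\ev\circ(\text{const})=\id$ on the nose, the composite on $\plim_{\delta\to 0}H_*$ is the map induced by $Q(\delta)\supset Q(\delta+\sqrt{2a})$, an isomorphism for small $a$. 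No coning homotopy or deformation retraction of $\Lambda^a$ onto $Q$ is needed; a one-sided inverse on homology after excision suffices. With that rerouting your argument closes.
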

\begin{proof}
We only prove $c^\Lambda(Q:\alpha)>0$, since $c^\Omega(Q:\alpha)>0$ can be proved by parallel arguments. 
In this proof, we use abbreviations $\Lambda^a:= \Lambda^a(\interior Q)$, $\Lambda_\delta:= \Lambda_\delta(\interior Q)$. 
For any positive $a$ and $\delta$, the excision property shows that 
\[
H_*(\Lambda^a, \Lambda^a \cap \Lambda_\delta ) \to 
H_*(\Lambda^a  \cup \Lambda_\delta, \Lambda_\delta)
\]
is an isomorphism. Therefore, it is enough to show that for sufficiently small $a>0$ 
\[
\plim_{\delta \to 0} H_*(\interior Q, \interior Q \setminus Q(\delta)) \to 
\plim_{\delta \to 0} H_*(\Lambda^a, \Lambda^a \cap \Lambda_\delta)
\]
is injective. 
For any $\gamma \in \Lambda^a \cap \Lambda_\delta$, there holds 
\[
\gamma(0) \in \interior Q \setminus Q(\delta + \len(\gamma)) \subset \interior Q \setminus Q(\delta + \sqrt{2a}).
\]
Define $\ev:\Lambda^a  \to \interior Q$ by $\ev(\gamma):=\gamma(0)$, and consider the commutative diagram 
\[
\xymatrix{
\plim_{\delta \to 0} H_*(\interior Q, \interior Q \setminus Q(\delta)) \ar[r] \ar[rd] &\plim_{\delta \to 0} H_*(\Lambda^a, \Lambda^a \cap \Lambda_\delta) \ar[d]^-{(\ev)_*} \\
& \plim_{\delta \to 0} H_*( \interior Q, \interior Q \setminus Q(\delta + \sqrt{2a})).}
\]
When $a>0$ is sufficiently small, the diagonal arrow is an isomorphism. 
Therefore the horizontal arrow is injective. 
\end{proof}

The next lemma shows that the capacity detects the length of a billiard trajectory. 

\begin{lem}\label{lem:spectrality} 
Let $\alpha \in H_j(Q,\partial Q) \setminus \{0\}$. 
\begin{enumerate} 
\item[(i):]  If $c^\Lambda(Q:\alpha)<\infty$, there exists a brake billiard trajectory $\gamma$ on $Q$ such that 
$\sharp \mca{B}_\gamma \le j-1$, $\len(\gamma)=c^\Lambda(Q:\alpha)$.
\item[(ii):]  If $c^\Omega(Q:\alpha)<\infty$, there exists a periodic billiard trajectory $\gamma$ on $Q$ such that 
$\sharp \mca{B}_\gamma \le j+1$, $\len(\gamma)=c^\Omega(Q:\alpha)$. 
\end{enumerate}
\end{lem}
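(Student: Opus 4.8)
The plan is to read off the trajectory from Theorem \ref{mainthm}, applied to the pairs $\bigl(\Lambda^b(Q)\cup\Lambda_\delta(Q),\,\Lambda^a(Q)\cup\Lambda_\delta(Q)\bigr)$ (resp. the $\Omega$-pairs) for energy levels straddling $c^2/2$, and then to let the two levels close in on $c^2/2$. Here $c:=c^\Lambda(Q:\alpha)$ in case (i) and $c:=c^\Omega(Q:\alpha)$ in case (ii); I only discuss (i), case (ii) being the same with $\Lambda,c^\Lambda,I^{\Lambda,\cdot}_j$ replaced by $\Omega,c^\Omega,I^{\Omega,\cdot}_j$. By hypothesis $c<\infty$, and by Lemma \ref{lem:nonvanishing} applied to $\alpha\ne 0$ we have $c>0$. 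From the definition of $c^\Lambda_1$, together with the fact that the maps $I^{\Lambda,a}_1$ form a compatible family as $a$ increases, we get: for every $a,b$ with $0<a<c^2/2<b$, there holds $I^{\Lambda,a}_1(\alpha)\ne 0$ and $I^{\Lambda,b}_1(\alpha)=0$.

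Fix such $a$ and $b$. The main step is to deduce
\[
\varprojlim_{\delta\to 0}H_{j+1}\bigl(\Lambda^b(Q)\cup\Lambda_\delta(Q),\,\Lambda^a(Q)\cup\Lambda_\delta(Q)\bigr)\ne 0.
\]
For each small $\delta>0$, consider the long exact sequence of the triple $\Lambda_\delta(Q)\subset\Lambda^a(Q)\cup\Lambda_\delta(Q)\subset\Lambda^b(Q)\cup\Lambda_\delta(Q)$, whose relevant portion is
\[
H_{j+1}\bigl(\Lambda^b(Q)\cup\Lambda_\delta(Q),\Lambda^a(Q)\cup\Lambda_\delta(Q)\bigr)\xrightarrow{\ \partial\ }H_j\bigl(\Lambda^a(Q)\cup\Lambda_\delta(Q),\Lambda_\delta(Q)\bigr)\xrightarrow{\ i_*\ }H_j\bigl(\Lambda^b(Q)\cup\Lambda_\delta(Q),\Lambda_\delta(Q)\bigr).
\]
These sequences are natural in $\delta$. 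The class $I^{\Lambda,a}_1(\alpha)$ lives in $\varprojlim_\delta$ of the middle term, is nonzero, and $i_*$ carries it to $I^{\Lambda,b}_1(\alpha)=0$; hence it lies in the image of $\partial$, and chasing a compatible family of preimages produces a nonzero element of $\varprojlim_\delta H_{j+1}(\Lambda^b(Q)\cup\Lambda_\delta(Q),\Lambda^a(Q)\cup\Lambda_\delta(Q))$. Theorem \ref{mainthm}(i), applied with $j+1$ in place of $j$, then yields a brake billiard trajectory $\gamma_{a,b}$ on $Q$ with $\sharp\mca{B}_{\gamma_{a,b}}\le (j+1)-2=j-1$ and $\len(\gamma_{a,b})\in[\sqrt{2a},\sqrt{2b}]$.

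It remains to take the limit. Choosing $a_m\uparrow c^2/2$ and $b_m\downarrow c^2/2$ gives brake billiard trajectories $\gamma_m:=\gamma_{a_m,b_m}$, each with at most $j-1$ bounce times, with $\len(\gamma_m)\to c$, parametrised proportionally to arc length. Their lengths and numbers of bounce times being uniformly bounded, a subsequence converges together with the geodesic pieces between consecutive bounce times; the limit $\gamma$ has $\len(\gamma)=c>0$ (hence is nonconstant), lies in $\partial Q$ at its endpoints with velocities normal to $\partial Q$ there, and satisfies the law of reflection at each of its (at most $j-1$) interior bounce points, so it is the asserted brake billiard trajectory. Part (ii) is identical, using Theorem \ref{mainthm}(ii) with degree $j+1$: this produces a periodic billiard trajectory with at most $j+1$ bounce times and length $c^\Omega(Q:\alpha)$.

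I expect the main obstacle to be the passage to $\varprojlim_\delta$ in the second paragraph: one must check that exactness of the triple sequences survives taking the inverse limit — i.e. that no $\varprojlim^1$-obstruction intervenes, equivalently that the kernels of $\partial$ satisfy a Mittag--Leffler condition — while keeping track of the fact that $I^{\Lambda,a}_1(\alpha)\ne 0$ means precisely that its $\delta$-component is nonzero for all sufficiently small $\delta$. The limiting argument in the last paragraph is routine, but one should record that the uniform bound $\sharp\mca{B}_{\gamma_m}\le j-1$ is exactly what prevents bounce points from accumulating, that colliding bounce points only reduce $\sharp\mca{B}_\gamma$ in the limit, and that a bounce point running into an endpoint is absorbed into the brake endpoint condition.
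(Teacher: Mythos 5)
Your overall strategy --- feed an exact sequence of a triple into Theorem \ref{mainthm} in degree $j+1$, then let the two energy levels close in on $c^2/2$ --- is the same as the paper's, and your final limiting step is at the same level of detail as the paper's. But the central step, showing $\varprojlim_{\delta}H_{j+1}(\Lambda^{b}(Q)\cup\Lambda_\delta(Q),\Lambda^{a}(Q)\cup\Lambda_\delta(Q))\ne 0$, contains a genuine gap which you flag but do not close: you run the diagram chase entirely inside the inverse limit, where exactness of the triple sequences is not available. For each fixed $\delta$ you do get \emph{some} preimage of the $\delta$-component of $I_1^{\Lambda,a}(\alpha)$ under $\partial_\delta$, i.e.\ a nonempty coset of $\ker\partial_\delta$; but the inverse limit of these nonempty cosets can be empty unless a ${\varprojlim}^1$-class vanishes, and nothing in the setup hands you a Mittag--Leffler condition. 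Saying that $I_1^{\Lambda,a}(\alpha)$ ``lies in the image of $\partial$'' is exactly the statement that fails in general after passing to $\varprojlim_\delta$, so as written the key nonvanishing is not established.

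The paper avoids this by exploiting Lemma \ref{lem:capacity} ($c_0^\Lambda=c_1^\Lambda$): since the capacity can equally be computed from the maps $I_0^{\Lambda,\cdot}$ into $H_*(\Lambda^{\cdot}(Q)\cup\Lambda_\partial(Q),\Lambda_\partial(Q))$, the vanishing at the upper level is available as $I_0^{\Lambda,b}(\alpha)=0$ in a \emph{single} pair, with no inverse limit in sight. The exact sequence of the triple with third space $\Lambda_\partial(Q)$ then yields an honest class $\xi\in H_{j+1}(\Lambda^{b}(Q)\cup\Lambda_\partial(Q),\Lambda^{a}(Q)\cup\Lambda_\partial(Q))$ with $\partial_0\xi=I_0^{\Lambda,a}(\alpha)$; because $\Lambda_\partial(Q)\subset\Lambda_\delta(Q)$ for every $\delta$, the image of $\xi$ is automatically a compatible family in $\varprojlim_\delta H_{j+1}(\Lambda^{b}(Q)\cup\Lambda_\delta(Q),\Lambda^{a}(Q)\cup\Lambda_\delta(Q))$, and it is nonzero because the commutative square carries it to $I_1^{\Lambda,a}(\alpha)\ne 0$. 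To repair your argument, replace $\Lambda_\delta$ by $\Lambda_\partial$ in the top row of your diagram and invoke $c_0^\Lambda(\alpha)=c_1^\Lambda(\alpha)$ to justify $I_0^{\Lambda,b}(\alpha)=0$; without some such device, identifying the ${\varprojlim}^1$ obstruction as ``the main obstacle'' is not the same as overcoming it.
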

\begin{proof}
We only prove (i), since (ii) can be proved by parallel arguments.
We set $a:= c^\Lambda(Q:\alpha)^2/2$. 
Then, for any $\ep>0$ there holds
$I^{\Lambda,a-\ep}_1(\alpha) \ne 0$ and 
$I^{\Lambda,a+\ep}_0(\alpha)=0$. 
In this proof, we use abbreviations 
$\Lambda^a:=\Lambda^a(Q)$, 
$\Lambda_\delta:=\Lambda_\delta(Q)$, 
$\Lambda_\partial:= \Lambda_\partial(Q)$ and so on. 

For any $\delta>0$, we have a commutative diagram
\[
\xymatrix{
H_{j+1}(\Lambda^{a+\ep} \cup \Lambda_\partial, \Lambda^{a-\ep} \cup \Lambda_\partial) \ar[r]^-{\partial_0}\ar[d]& H_j(\Lambda^{a-\ep} \cup \Lambda_\partial, \Lambda_\partial)\ar[d] \\
H_{j+1}(\Lambda^{a+\ep} \cup \Lambda_{\delta}, \Lambda^{a-\ep} \cup \Lambda_\delta  ) \ar[r]_-{\partial_\delta}& H_j(\Lambda^{a-\ep} \cup \Lambda_\delta, \Lambda_\delta), 
}
\]
where vertical arrows are induced by inclusions, and horizontal arrows are connecting homomorphisms. 
Since $I_0^{\Lambda,a+\ep}(\alpha) =0$, we have 
$I_0^{\Lambda,a-\ep}(\alpha) \in \Image \partial_0$. 
Letting $\delta \to 0$ of the above diagram, we have the following commutative diagram: 
\[
\xymatrix{
H_{j+1}(\Lambda^{a+\ep} \cup \Lambda_\partial, \Lambda^{a-\ep} \cup \Lambda_\partial) \ar[r]^-{\partial_0}\ar[d]& H_j(\Lambda^{a-\ep} \cup \Lambda_\partial, \Lambda_\partial)\ar[d]^{\iota} \\
\plim_{\delta \to 0} H_{j+1}(\Lambda^{a+\ep} \cup \Lambda_{\delta}, \Lambda^{a-\ep} \cup \Lambda_\delta  ) \ar[r] & \plim_{\delta \to 0} H_j(\Lambda^{a-\ep} \cup \Lambda_\delta, \Lambda_\delta). 
}
\]
Let us denote the right vertical arrow as $\iota$. 
Then, $\iota(I_0^{\Lambda,a-\ep}(\alpha)) = I_1^{\Lambda,a-\ep}(\alpha) \ne 0$. 
Since $I_0^{\Lambda,a-\ep}(\alpha) \in \Image \partial_0$, we get 
$\plim_{\delta \to 0} H_{j+1}(\Lambda^{a+\ep} \cup \Lambda_{\delta}, \Lambda^{a-\ep} \cup \Lambda_\delta  )  \ne 0$. 
By Theorem \ref{mainthm}, there exists a brake billiard trajectory $\gamma_\ep$ on $Q$ such that
$\sharp \mca{B}_{\gamma_\ep} \le j-1$ and 
$\len(\gamma_\ep) \in [\sqrt{2(a-\ep)}, \sqrt{2(a+\ep)}]$. 
As $\ep \to 0$, a certain subsequence of $(\gamma_\ep)_\ep$ converges to a brake billiard trajectory $\gamma$ such that 
$\sharp \mca{B}_\gamma \le j-1$, 
$\len (\gamma) = \sqrt{2a}= c^\Lambda(Q:\alpha)$. 
\end{proof}

\subsection{Capacity and inradius} 

By Lemma \ref{lem:spectrality}, 
Theorem \ref{thm:short} follows at once from the following proposition. 
Recall that $r(Q)$ denotes the inradius of $Q$. 

\begin{prop}\label{prop:bound}
Let $Q$ be a compact, connected Riemannian manifold with nonempty boundary, and $\alpha \in H_j(Q,\partial Q)$. 
Then, there holds $c^\Lambda(Q:\alpha) \le 2j r(Q)$, $c^\Omega(Q:\alpha) \le 2(j+1)r(Q)$. 
\end{prop}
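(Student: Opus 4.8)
The plan is to reduce the estimate on $c^\Lambda$ and $c^\Omega$ to a direct construction. By the equality $c^\Lambda(Q:\alpha)=c^\Lambda_0(\alpha)$ from Lemma \ref{lem:capacity}, it suffices to show that for every $a > (2jr(Q))^2/2 = 2j^2 r(Q)^2$ the homomorphism $I^{\Lambda,a}_0(\alpha)=0$, and similarly for the loop case with $a > (2(j+1)r(Q))^2/2$. Since $\alpha$ is pushed forward from $H_*(Q,\partial Q)$ via the constant-path inclusion $Q \hookrightarrow \Lambda(Q)$, the idea is to build an explicit deformation (a chain homotopy, realized geometrically) that moves any $j$-cycle representing $\alpha$ into $\Lambda_\partial(Q)$, as long as the cycle is ``thin enough'' in the parameter direction, which we can arrange after subdivision.

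The key geometric input is the inradius bound: every point $q \in Q$ lies within distance $r(Q)$ of $\partial Q$. First I would fix a triangulation of a relative cycle representing $\alpha$ and, by barycentric subdivision, assume each $j$-simplex has image of small diameter; then pick for each vertex $v$ a point $\pi(v) \in \partial Q$ with $\dist(v,\pi(v)) \le r(Q)$, and for nearby points a consistent nearest-boundary-point choice, so that on each simplex we get a map to $\partial Q$ together with a geodesic (or broken geodesic) homotopy of length at most $r(Q)$ joining a point to its projection. The heart of the construction is to assemble, over the $j$-simplex $\Delta^j$, a family of paths in $\Lambda(Q)$: essentially, for barycentric coordinates $(t_0,\dots,t_j)$ one concatenates geodesic segments from $q$ toward the projections of the vertices, reparametrized on $[0,1]$, so that the resulting path always touches $\partial Q$ (hence lies in $\Lambda_\partial(Q)$) while its energy is controlled by $j \cdot r(Q)$ per leg — the factor $2j$ in the length bound $2jr(Q)$ comes from going out and back along $j$ such legs. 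For the loop case one must close up the path, which costs an extra leg, giving the factor $2(j+1)$; this is exactly where the $\Omega$ versus $\Lambda$ discrepancy enters. One then checks that this family depends continuously on the simplex, respects faces (so it patches to a global chain homotopy between the original cycle and a cycle supported in $\Lambda_\partial(Q)$, resp.\ $\Lambda_\delta(Q)$ after a small perturbation), and that all intermediate loops have energy $<a$; this forces $I^{\Lambda,a}_0(\alpha)=0$.

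The main obstacle I expect is making the simplicial construction of the path-family genuinely continuous and face-compatible: the nearest-point projection to $\partial Q$ is not globally smooth, and the ``go out to the boundary and come back'' paths must be defined coherently across overlapping simplices, at the right energy level, and must degenerate correctly on faces (where one of the $t_i \to 0$). A clean way around the projection issue is to work in $\interior Q$ and use $I^{\Lambda,a}_2$ together with Lemma \ref{lem:QandintQ}, perturbing away from $\partial Q$ at the end; one may also replace exact distance-minimizing projections by a fixed smooth vector field flowing toward $\partial Q$ and estimate lengths along its trajectories. The energy bookkeeping — verifying that $j$ legs each of length $\le r(Q)$ traversed back and forth yield a loop of length $\le 2jr(Q)$, hence energy $\le 2j^2 r(Q)^2$, with strict inequality after reparametrization — is routine but must be done carefully, since the whole point is the sharp constant.
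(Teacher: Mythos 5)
Your overall strategy coincides with the paper's: reduce via Lemma \ref{lem:capacity} to showing $I^{\Lambda,a}_0(\alpha)=0$ for every $a>(2jr(Q))^2/2$, represent $\alpha$ as $f_*(\beta)$ for some $f:(P,P')\to(Q,\partial Q)$ with $P$ a $j$-dimensional complex, and construct a map $F:P\times[0,1]\to\Lambda^a(Q)$ starting at the constant paths and ending in $\Lambda_\partial(Q)$, assembled from short paths to the boundary traversed out and back --- $2j$ one-way legs for $\Lambda$, $2(j+1)$ for $\Omega$, which is where the two constants come from. The reduction and the energy bookkeeping (total length $\le 2jr(Q)$, energy $<a$ after uniform reparametrization) are correct.

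However, the heart of the argument --- the continuous, face-compatible assembly of the legs with the \emph{sharp} count --- is exactly what you leave unresolved, and you flag it yourself as ``the main obstacle I expect'' without overcoming it. Two concrete points. First, ``one leg per vertex with nearest-point projections'' does not work as stated: a $j$-simplex has $j+1$ vertices (which would spoil the constant $2jr(Q)$), and the nearest-point projection to $\partial Q$ is discontinuous; the paper sidesteps projections entirely via Lemma \ref{lem:shortpath}, which produces, for any $R>r(Q)^2/2$, a \emph{continuous local family} of paths of energy $<R$ ending on $\partial Q$ by a soft continuity argument from a single such path, so that after subdivision only one base point $q(\sigma)$ per simplex is needed and no consistency of projections arises. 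Second, and more seriously, the mechanism that makes the leg family continuous across overlapping stars, compatible with faces, and guarantees that at \emph{every} $x\in P$ at least one leg actually reaches $\partial Q$, is the system of cutoff functions $w_\sigma$ of Lemma \ref{lem:polyhedron}: one leg per \emph{dimension} $k=0,\dots,j$ rather than per vertex, with $\supp w_\sigma\subset\Star(\sigma)$, disjoint supports for distinct simplices of equal dimension (so each $F_k$ is well defined and continuous), and $\bigcup_\sigma w_\sigma^{-1}(1)=P$ (so some leg is fully extended everywhere, forcing $F(x,1)\in\Lambda_\partial(Q)$). That lemma is proved by a nontrivial induction on $\dim P$ and is the real content of the proof; without it, or an equivalent combinatorial device, your construction neither patches together continuously nor certifies that the final paths meet the boundary.
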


The goal of this subsection is to prove Proposition \ref{prop:bound}. 
We will give a proof which stems from arguments in our paper \cite{Ir}, Section 7. 
First we need some preliminary results: 
Lemma \ref{lem:polyhedron}, Lemma \ref{lem:shortpath}. 

Let $P$ be a finite simplicial complex and $\sigma$ be a simplex on $P$. 
$\Star(\sigma) \subset P$ denotes the union of interiors of all simplices of $P$, which contain $\sigma$ as a facet, i.e.
$\Star(\sigma):= \bigcup_{\sigma \subset \tau} \interior \tau$.  

\begin{lem}\label{lem:polyhedron}
Let $P$ be a finite simplicial complex. 
There exist continuous functions $w_\sigma:P \to [0,1]$ where $\sigma$ runs over all simplices of $P$, 
such that the following holds: 
\begin{enumerate}
\item[(i):] For any simplex $\sigma$, $\supp w_\sigma \subset \Star(\sigma)$. 
\item[(ii):] For any distinct simplices $\sigma$, $\sigma'$ of same dimensions, $\supp w_\sigma \cap \supp w_{\sigma'} = \emptyset$.
\item[(iii):] $\bigcup_{\sigma} w_\sigma^{-1}(1) = P$, where $\sigma$ runs over all simplices of $P$. 
\end{enumerate} 
\end{lem}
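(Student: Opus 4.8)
\textbf{Proof plan for Lemma \ref{lem:polyhedron}.}

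The plan is to construct the functions $w_\sigma$ explicitly from barycentric coordinates. Recall that after passing to the barycentric subdivision (or simply working with the given triangulation), every point $x\in P$ lying in a simplex $\tau$ has well-defined barycentric coordinates $(\lambda_v(x))_{v}$ indexed by the vertices $v$ of $\tau$, which extend continuously over $P$ by setting $\lambda_v(x)=0$ when $x\notin\Star(v)$. For a simplex $\sigma$ with vertex set $V(\sigma)$, the natural candidate is to let $w_\sigma$ depend on the two quantities $m_\sigma(x):=\min_{v\in V(\sigma)}\lambda_v(x)$ and $M_\sigma(x):=\max_{v\notin V(\sigma)}\lambda_v(x)$ (with $M_\sigma(x):=0$ if $x$ lies in a simplex all of whose vertices are in $V(\sigma)$). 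Intuitively $w_\sigma$ should be $1$ exactly at the barycenter of $\sigma$ and at points ``dominated by $\sigma$'', and should vanish off $\Star(\sigma)$; a concrete formula such as $w_\sigma(x):=\max\{0,\min\{1,\, c\,(m_\sigma(x)-M_\sigma(x))\}\}$ for a suitable constant $c=c(\dim P)>0$ will do, but I would first nail down the precise combinatorial conditions (i)--(iii) before committing to the exact shape.

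First I would verify (i): if $x\in\supp w_\sigma$ then $m_\sigma(x)>0$, so $\lambda_v(x)>0$ for every $v\in V(\sigma)$, which by definition of barycentric coordinates means the carrier simplex of $x$ contains $\sigma$ as a face, i.e. $x\in\Star(\sigma)$; taking closures and using that $\Star(\sigma)$ already contains the relevant closed faces handles the support (I should be slightly careful here: $\supp$ is the closure of $\{w_\sigma\ne0\}$, so I want $w_\sigma$ to vanish on $\overline{\Star(\sigma)}\setminus\Star(\sigma)$ as well, which is arranged by the $-M_\sigma(x)$ term forcing $w_\sigma=0$ as soon as a vertex outside $V(\sigma)$ has large coordinate). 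Next, (ii): if $\sigma\ne\sigma'$ have the same dimension $k$ and $x\in\supp w_\sigma\cap\supp w_{\sigma'}$, then $x\in\Star(\sigma)\cap\Star(\sigma')$, so the carrier simplex $\tau$ of $x$ contains both $\sigma$ and $\sigma'$; but then $V(\sigma)$ and $V(\sigma')$ are distinct $(k{+}1)$-subsets of $V(\tau)$, so each contains a vertex the other omits, forcing $M_\sigma(x)\ge m_{\sigma'}(x)$ and $M_{\sigma'}(x)\ge m_\sigma(x)$; one then checks that these inequalities are incompatible with both $m_\sigma(x)>M_\sigma(x)$ and $m_{\sigma'}(x)>M_{\sigma'}(x)$ holding, contradicting $w_\sigma(x),w_{\sigma'}(x)>0$. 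Finally (iii): given any $x$ with carrier simplex $\tau$, let $\sigma$ be the face of $\tau$ spanned by the vertices attaining $\max_v\lambda_v(x)$; then $m_\sigma(x)=\max_v\lambda_v(x)$ while $M_\sigma(x)<m_\sigma(x)$ strictly (vertices outside $V(\sigma)$ have strictly smaller coordinate by choice of $\sigma$), and one checks the gap $m_\sigma(x)-M_\sigma(x)$ is bounded below by a positive constant depending only on $\dim P$ (there are finitely many ``coordinate patterns''), so with $c$ chosen large enough $w_\sigma(x)=1$.

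Continuity of each $w_\sigma$ is immediate since barycentric coordinates are continuous on $P$ and $\min$, $\max$, truncation preserve continuity. The main obstacle I anticipate is purely bookkeeping in step (iii): making sure that the constant $c$ can be chosen \emph{uniformly} over all $x$ and all simplices, i.e. that the gap $m_\sigma(x)-M_\sigma(x)$ for the ``dominant'' face $\sigma$ is bounded away from $0$ independently of $x$. This follows from a compactness/finiteness argument — on each closed simplex the relevant quantity is a continuous function that is positive on a compact set, or alternatively one can subdivide once more so that the coordinate of a dominant vertex is always at least $1/(n{+}1)$ while non-dominant vertices in $V(\sigma)$ are also controlled — but it is the one place where a little care is needed rather than a one-line observation. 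Everything else reduces to elementary combinatorics of faces and stars.
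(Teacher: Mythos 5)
Your barycentric--coordinate approach is genuinely different from the paper's proof (which proceeds by induction on $\dim P$, choosing one interior point $x_j$ in each top--dimensional simplex and retracting $P\setminus\{x_1,\ldots,x_m\}$ onto the $(d-1)$--skeleton), and it can be made to work, but two of your verifications fail as written. First, with $w_\sigma=\max\{0,\min\{1,c(m_\sigma-M_\sigma)\}\}$ the set $\{w_\sigma\ne 0\}=\{m_\sigma>M_\sigma\}$ does lie in $\Star(\sigma)$, but its \emph{closure} does not: on all of $\interior\sigma$ one has $M_\sigma\equiv 0<m_\sigma$, so $w_\sigma>0$ there, hence $\supp w_\sigma\supset\overline{\interior\sigma}=\sigma\supset\partial\sigma$, while $\partial\sigma\cap\Star(\sigma)=\emptyset$ as soon as $\dim\sigma\ge 1$ (recall $\Star(\sigma)$ is the \emph{open} star here, so it does not ``contain the relevant closed faces''). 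The $-M_\sigma$ term is irrelevant to this failure mode, since $M_\sigma$ vanishes identically on $\interior\sigma$. The same issue undermines (ii): your argument only shows that the \emph{open} sets $\{w_\sigma\ne 0\}$ and $\{w_{\sigma'}\ne 0\}$ are disjoint, and their closures can still meet --- for the two vertices $v,v'$ of an edge, the supports of $\max\{0,\lambda_v-\lambda_{v'}\}$ and $\max\{0,\lambda_{v'}-\lambda_v\}$ share the midpoint. Both problems are cured by a positive threshold, e.g. $w_\sigma:=\phi\bigl(c(m_\sigma-M_\sigma)\bigr)$ with $\phi(s)=\min\{1,\max\{0,s-1\}\}$, which forces $\supp w_\sigma\subset\{m_\sigma\ge M_\sigma+1/c\}$; this is a closed subset of $\Star(\sigma)$ and is disjoint from its counterpart for any other $\sigma'$ of the same dimension by exactly your vertex--exchange argument.

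Second, your verification of (iii) is incorrect: for $\sigma$ the face spanned by the vertices attaining $\max_v\lambda_v(x)$, the gap $m_\sigma(x)-M_\sigma(x)$ is \emph{not} bounded below by a constant depending only on $\dim P$. On an edge with coordinates $(1/2+\delta,1/2-\delta)$ the dominant face is a single vertex and the gap is $2\delta\to 0$; no compactness or finiteness argument rescues this, because the assignment $x\mapsto\sigma(x)$ is discontinuous precisely where the gap degenerates. The fix is to choose $\sigma$ by the \emph{largest gap} in the sorted coordinates rather than by the argmax: writing $\lambda_{(1)}\ge\cdots\ge\lambda_{(k+1)}>0=\lambda_{(k+2)}$ for the coordinates on the carrier $\tau$ of $x$ (with $k=\dim\tau$), the telescoping sum $\sum_j(\lambda_{(j)}-\lambda_{(j+1)})=\lambda_{(1)}\ge 1/(k+1)$ yields some $j$ with $\lambda_{(j)}-\lambda_{(j+1)}\ge (\dim P+1)^{-2}$, and for $\sigma$ the span of the top $j$ vertices one has $m_\sigma-M_\sigma=\lambda_{(j)}-\lambda_{(j+1)}$. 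Taking $c=2(\dim P+1)^2$ with the thresholded cutoff above then gives all of (i)--(iii). So the plan is salvageable and arguably cleaner than the paper's induction, but both places you flagged as needing ``a little care'' are exactly where the argument as stated breaks, and the repairs are not the ones you proposed.
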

\begin{proof}
The proof is by induction on $\dim P$. 
The claim is obvious when $\dim P=0$. 
Suppose that we have proved the claim for finite simplicial complexes of dimension $\le d-1$, and let $P$ be a finite simplicial complex of dimension $d$. 

Let $\sigma_1,\ldots,\sigma_m$ be the all simplices on $P$ of dimension $d$, and 
$P^{(d-1)}$ denote the union of all simplices on $P$ of dimension $\le d-1$. 
Take $x_j \in \interior \sigma_j$ for every $j=1,\ldots, m$.
There exists a continuous retraction $r: P \setminus \{x_1,\ldots,x_m\} \to P^{(d-1)}$ such that, 
there holds $r(\sigma_j \setminus \{x_j\}) = \partial \sigma_j$ for any $j=1,\ldots, m$. 

We define a continuous function $\tilde{w}_\sigma: P \to [0,1]$ for each simplex $\sigma$ of $P$. 
When $\dim \sigma = d$, i.e. $\sigma = \sigma_j$ for some $j=1,\ldots,m$, we define $\tilde{w}_{\sigma_j}$ so that 
$\supp \tilde{w}_{\sigma_j} \subset \interior \sigma_j$, and $\tilde{w}_\sigma \equiv 1$ on some neighborhood of $x_j$. 
Then there exists a continuous function $v: P \to [0,1]$ such that $x_1, \ldots, x_m \notin \supp v$ and 
$\tilde{w}_{\sigma_1}^{-1}(1) \cup \cdots \cup \tilde{w}_{\sigma_m}^{-1}(1) \cup v^{-1}(1)=P$. 

Next we define $\tilde{w}_\sigma$ when $\dim \sigma \le d-1$. 
By the induction hypothesis, one can take $w_\sigma: P^{(d-1)} \to [0,1]$ for each $\sigma \subset P^{(d-1)}$
so that our requirements (i)--(iii) hold for $(w_\sigma)_{\sigma \subset P^{(d-1)}}$.
We define $\tilde{w}_\sigma:P \to [0,1]$ by 
\[
\tilde{w}_\sigma(x):= \begin{cases} 
                       0 &( x \in \{x_1,\ldots, x_m \} ), \\
                      v(x) w_\sigma(r(x)) &(x \notin \{x_1, \ldots, x_m \}).
                      \end{cases}
\]
Let us check that $(\tilde{w}_\sigma)_\sigma$ satisfies our requirements (i)---(iii).
By definition, if $\dim \sigma = d$ then $\supp \tilde{w}_\sigma \subset \interior \sigma$. 
Then (i), (ii) are obvious when $\dim \sigma = d$. 
When $\dim \sigma \le d-1$, $\supp \tilde{w}_\sigma \subset r^{-1}(\supp w_\sigma)$. This is because
$\{\tilde{w}_\sigma \ne 0\}$ is contained in $\supp v \cap r^{-1}(\supp w_\sigma)$, which is closed in $P$. 
Then, one can prove (i) for $\dim \sigma \le d-1$ by 
\[
\supp \tilde{w}_\sigma \subset r^{-1}(\supp w_\sigma) \subset r^{-1}(\Star(\sigma) \cap P^{(d-1)}) \subset \Star(\sigma). 
\]
The second inclusion holds since $(w_\sigma)_\sigma$ satisfies (i), 
and the third inclusion holds since $r(\sigma_j \setminus \{x_j\}) = \partial \sigma_j$ 
for any $j=1,\ldots,m$. (ii) for $\dim \sigma \le d-1$ is proved as follows
(notice that $\supp w_\sigma \cap \supp w_{\sigma'} = \emptyset$, since $(w_\sigma)_\sigma$ satisfies (ii)): 
\[
\supp \tilde{w}_\sigma \cap \supp \tilde{w}_{\sigma'} \subset r^{-1}(\supp w_\sigma \cap \supp w_{\sigma'}) = \emptyset. 
\]
(iii) follows from $\bigcup_{\sigma \subset P^{(d-1)}} w_\sigma^{-1}(1) = P^{(d-1)}$ (i.e. $(w_\sigma)_\sigma$ satisfies (iii)) and 
$\tilde{w}_{\sigma_1}^{-1}(1) \cup \cdots \cup \tilde{w}_{\sigma_m}^{-1}(1) \cup v^{-1}(1)=P$. 
\end{proof}

\begin{lem}\label{lem:shortpath}
For any $R > r(Q)^2/2$ and 
$q \in Q$, there exists an open neighborhood $V$ of $q$ and a continuous map 
$\lambda: V \to \Lambda^R(Q)$ such that, 
there holds $\lambda(v)(0)=v$ and $\lambda(v)(1) \in \partial Q$ for any $v \in V$. 
\end{lem}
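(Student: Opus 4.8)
The plan is to produce, for each $q \in Q$, a path from $q$ to $\partial Q$ of energy less than $R$, and then observe that such a path can be chosen to depend continuously on the basepoint in a small neighborhood. The starting point is the definition of inradius: $\dist(q,\partial Q) \le r(Q)$ for every $q$, so there is a point $q' \in \partial Q$ with $\dist(q,q') \le r(Q)$. Take a shortest geodesic (or any minimizing path) from $q$ to $q'$ and reparametrize it proportionally to arclength on $[0,1]$; the resulting path $\gamma_q$ satisfies $\gamma_q(0)=q$, $\gamma_q(1)=q' \in \partial Q$, and has constant speed equal to $\dist(q,q') \le r(Q)$, hence $\mca{E}(\gamma_q) = \dist(q,q')^2/2 \le r(Q)^2/2 < R$. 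This handles a single $q$; the content of the lemma is the continuous dependence on the basepoint.

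For the continuity, I would argue as follows. Fix $q$, and first handle the case $q \notin \partial Q$. Choose $q' \in \partial Q$ as above, let $\gamma_q$ be the constant-speed minimizing path, and set $R_0 := \mca{E}(\gamma_q) = \dist(q,q')^2/2 < R$. Since $\partial Q$ is a submanifold, one can push the whole picture slightly: for $v$ in a small neighborhood $V$ of $q$, concatenate a short path from $v$ to $q$ (of arbitrarily small energy, since $Q$ is a Riemannian manifold and we may take $V$ inside a convex geodesic ball around $q$) with $\gamma_q$, and reparametrize; by choosing $V$ small enough the total energy stays below $R$, and the concatenation-plus-reparametrization $v \mapsto \lambda(v)$ is visibly continuous into $\Lambda(Q)$ and lands in $\Lambda^R(Q)$. (Alternatively one uses that $(v,w) \mapsto$ constant-speed geodesic between $v$ and $w$ is continuous on a convex ball.) If instead $q \in \partial Q$, the situation is even easier: take $\lambda(v)$ to be (a small perturbation of) the constant path, using a tubular neighborhood of $\partial Q$ to send nearby points $v$ to $\partial Q$ along short normal segments; for $V$ small the energy is well below $R$, since $R > r(Q)^2/2 \ge 0$.

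The main obstacle is purely a matter of bookkeeping rather than depth: one must ensure that the energy bound $\mca{E}(\lambda(v)) < R$ is uniform over the neighborhood $V$, and this requires the strict inequality $R > r(Q)^2/2$ together with continuity of $q \mapsto \dist(q,\partial Q)$ (which is $1$-Lipschitz) so that there is room to absorb the energy of the small connecting segments. A secondary point is that minimizing paths to $\partial Q$ need not be unique and need not depend continuously on $q$ globally, which is exactly why the argument is local: one fixes a single choice $\gamma_q$ at the center point $q$ and only perturbs nearby, rather than trying to select a minimizer at every point simultaneously. Concatenation and proportional-to-energy reparametrization are continuous operations on $\Lambda(Q)$ with respect to the $W^{1,2}$-topology, so no further subtlety arises there.
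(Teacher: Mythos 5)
Your proposal is correct and follows essentially the same route as the paper: fix one path from $q$ to $\partial Q$ of energy below $R$ (which exists since $R > r(Q)^2/2 \ge \dist(q,\partial Q)^2/2$), extend it to a continuous family of paths starting at nearby points and ending on $\partial Q$, and shrink the neighborhood so the energy bound persists. The paper does the last step simply by taking the preimage of the open set $\Lambda^R(Q)$ under the continuous extension, whereas you do it by explicit energy bookkeeping; both work.
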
 
\begin{proof}
Since $R > r(Q)^2 /2\ge \dist(q,\partial Q)^2/2$, 
there exists $\gamma \in \Lambda^R(Q)$ such that $\gamma(0)=q$ and $\gamma(1) \in \partial Q$. 
Then there exists an open neighborhood $\tilde{V}$ of $q$ and a continuous map $\tilde{\lambda}: \tilde{V} \to \Lambda(Q)$ such that 
$\tilde{\lambda}(q)=\gamma$ and $\tilde{\lambda}(v)(0)=v$, $\tilde{\lambda}(v)(1) \in \partial Q \, (\forall v \in \tilde{V})$. 
Then, $V:= \tilde{\lambda}^{-1}(\Lambda^R(Q))$ and $\lambda:= \tilde{\lambda}|_V$ satisfy our requirements. 
\end{proof} 

Before starting the proof of Proposition \ref{prop:bound}, we introduce some operations on $\Lambda(Q)$. 
\begin{itemize}
\item For any $a \in [0,1]$ and $\gamma \in \Lambda(Q)$, we define $a\gamma \in \Lambda(Q)$ by $a\gamma(t):=\gamma(at)$. 
The map $[0,1] \times \Lambda(Q) \to \Lambda(Q); (a,\gamma) \mapsto a\gamma$ is continuous. 
\item For any $\gamma \in \Lambda(Q)$, we define $\bar{\gamma} \in \Lambda(Q)$ by $\bar{\gamma}(t):=\gamma(1-t)$. 
The  map $\Lambda(Q) \to \Lambda(Q); \gamma \mapsto \bar{\gamma}$ is continuous. 
\item For any $\gamma_1, \ldots, \gamma_m \in \Lambda(Q)$ such that $\gamma_k(1)=\gamma_{k+1}(0)$ for $k=1,\ldots,m-1$, 
We define $\con(\gamma_1, \ldots,\gamma_m) \in \Lambda(Q)$ by 
\[
\con(\gamma_1,\ldots,\gamma_m)(t):= \gamma_{k+1}(m(t-k/m)) \quad (k/m \le t \le (k+1)/m,\, k=0,\ldots,m-1). 
\]
This is called the \textit{concatenation} of $\gamma_1,\ldots,\gamma_m$. 
The following map is continuous: 
\begin{align*}
&\{(\gamma_1,\ldots,\gamma_m) \mid \gamma_1,\ldots,\gamma_m \in \Lambda(Q), \gamma_k(1)=\gamma_{k+1}(0)\,(k=1,\ldots,m-1)\} \to \Lambda(Q); \\
& \qquad (\gamma_1,\ldots,\gamma_m) \mapsto \con(\gamma_1,\ldots,\gamma_m).
\end{align*}
\end{itemize}

\begin{proof}[\textbf{Proof of Proposition \ref{prop:bound}}]

First we prove $c^\Lambda(Q:\alpha) \le 2jr(Q)$. 
It is enough to show $I^{\Lambda,a}_0(\alpha)=0$ for any $a > (2jr(Q))^2/2$. 
Let us take a $j$-dimensional finite simplicial complex $P$, a subcomplex $P' \subset P$ and a continuous map $f:(P,P') \to (Q,\partial Q)$ such that
$\alpha \in f_*(H_j(P,P'))$. 

Suppose that there exists a continuous map $F: P \times [0,1] \to \Lambda^a(Q)$ which satisfies the following properties: 
\begin{enumerate}
\item[F-(i):]  For any $x \in P$, $F(x,0)=p_{f(x)}$. 
\item[F-(ii):]  For any $(x,t) \in P'':=P' \times [0,1] \cup P \times \{1\}$, $F(x,t) \in \Lambda_\partial (Q)$. 
\end{enumerate} 
We obtain the following commutative diagram, where $i^P: (P,P') \to (P \times [0,1], P'')$ is defined by $i^P(x):=(x,0)$. 
\[
\xymatrix{
H_j(P,P') \ar[r]^{f_*} \ar[d]_{(i^P)_*}  & H_j(Q,\partial Q) \ar[d]\ar[rd]^{I^{\Lambda,a}_0}& \\
H_j(P \times [0,1], P'') \ar[r]_-{F_*}  & H_j(\Lambda^a(Q),\Lambda^a(Q) \cap  \Lambda_\partial (Q))\ar[r]& H_j(\Lambda^a(Q) \cup \Lambda_\partial (Q), \Lambda_\partial(Q)). 
}
\]
It is easy to see that $(i^P)_*=0$, thus $I^{\Lambda,a}_0 \circ f_*=0$. 
Since $\alpha \in f_*(H_j(P,P'))$, we have $I^{\Lambda,a}_0(\alpha)=0$. 
Hence it is enough to define $F$ which satisfies F-(i) and F-(ii). 

By our assumption, $a/(2j)^2 > r(Q)^2/2$. 
By Lemma \ref{lem:shortpath}, for any $q \in Q$ there exists a neighborhood $V_q$ of $q$ and $\lambda_q: V_q \to \Lambda^{a/(2j)^2}(Q)$ which satisfies 
$\lambda_q(v)(0)=v$ and $\lambda_q(v)(1) \in \partial Q$ for any $v \in V_q$. 

By replacing $P$ with its subdivison if necessary, we may assume that the following holds: 
for any simplex $\sigma$ of $P$, there exists $q \in Q$ such that $f(\Star(\sigma)) \subset V_q$. 
We choose such $q$, and denote it by $q(\sigma)$. 
Moreover, we take $(w_\sigma)_\sigma$, a family of continuous functions on $P$ as in Lemma \ref{lem:polyhedron}. 

We define $F_k: P \to \Lambda(Q)$ for each $k=0,\ldots,j$. 
Since $(w_\sigma)_\sigma$ satisfies Lemma \ref{lem:polyhedron} (ii), for each $x \in P$ and $k=0,\ldots,j$,
either (a) or (b) holds: 
\begin{itemize}
\item[(a):] There exists a unique $k$-dimensional simplex $\sigma$ of $P$ such that $x \in \supp w_\sigma$. 
\item[(b):] $x \notin \supp w_\sigma$ for any $k$-dimensional simplex $\sigma$ of $P$. 
\end{itemize}
In case (a), $f(x) \in f(\Star(\sigma)) \subset V_{q(\sigma)}$. Then we define
$F_k(x) \in \Lambda(Q)$ by 
$F_k(x):=w_\sigma(x) \cdot \lambda_{q(\sigma)}(f(x))$, i.e.
\[
F_k(x): [0,1] \to Q; \quad t \mapsto \lambda_{q(\sigma)}(f(x)) (w_\sigma(x) \cdot t).
\]
In case (b), we define $F_k(x):= p_{f(x)}$. 
Then, it is easy to check that $F_k$ is a continuous map, which satisfies the following properties:
\begin{itemize}
\item For any $x \in P$, $\mca{E}(F_k(x)) < a/(2j)^2$. 
\item For any $x \in P$, $F_k(x)(0)=f(x)$. 
\item If $x \in P$ satisfies $w_\sigma(x)=1$ for some $k$-dimensional simplex $\sigma$ of $P$, $F_k(x)(1)= \lambda_{q(\sigma)}(f(x))(1) \in \partial Q$. 
\end{itemize} 

Now we define $F: P \times [0,1] \to \Lambda(Q)$ by 
\[
F(x,t):= \con(\overline{tF_0(x)}, tF_1(x), \overline{tF_1(x)}, \ldots, t F_{j-1}(x), \overline{tF_{j-1}(x)}, tF_j(x)).
\]
The above concatenation is well-defined, since $F_0(x)(0) = \cdots = F_j(x)(0)$. 
For any $x \in Q$, $\mca{E}(F_0(x)), \ldots, \mca{E}(F_j(x)) < a/(2j)^2$. 
Thus $\mca{E}(F(x,t))<a$, therefore $F(P \times [0,1]) \subset \Lambda^a(Q)$. 
For any $x \in P$ and $k=0,\ldots,j$, one has $0 \cdot F_k(x) = p_{f(x)}$, 
thus $F(x,0) = p_{f(x)}$. 
This shows that $F$ satisfies F-(i).

We check that $F$ satisfies F-(ii).
One has $F(x,t) \in \Lambda_\partial(Q)$ for any $(x,t) \in P' \times [0,1]$, 
since $F(x,t)(1/2j)=F_0(x)(0)=f(x) \in \partial Q$. 
Hence it is enough to show that $F(x,1) \in \Lambda_\partial (Q)$ for any $x \in P$. 
By Lemma \ref{lem:polyhedron} (iii), 
there exists a simplex $\sigma$ of $P$ such that $w_\sigma(x)=1$. Let $k:=\dim \sigma$. Then 
$F(x,1)(k/j) = F_k(x)(1) \in \partial Q$. 
Hence $F(x,1) \in \Lambda_\partial (Q)$. 
This completes the proof of $c^\Lambda(Q:\alpha) \le 2jr(Q)$. 

The proof of $c^\Omega(Q:\alpha) \le  2(j+1) r(Q)$ is almost same. 
Let us take $P' \subset P$ and  $f: (P, P') \to (Q, \partial Q)$ so that $\alpha \in f_*(H_j(P, P'))$. 
It is enough to show that, if $a/(2j+2)^2 > r(Q)^2/2$, there exists a continuous map $F': P \times [0,1] \to \Omega^a(Q)$ such that: 
\begin{enumerate}
\item[F'-(i):]  For any $x \in P$, $F'(x,0)=l_{f(x)}$. 
\item[F'-(ii):]  For any $(x,t) \in P''=P' \times [0,1] \cup P \times \{1\}$, $F'(x,t) \in \Omega_\partial (Q)$. 
\end{enumerate}
For each $k=0,\ldots,j$, we define $F'_k: P \to \Lambda^{a/(2j+2)^2}(Q)$ as in the proof of $c^\Lambda(Q:\alpha) \le 2jr(Q)$. 
Then we define $F'$ by 
\[
F'(x,t):= \con(tF'_0(x), \overline{tF'_0(x)}, \ldots,  t F'_j(x), \overline{tF'_j(x)}).
\]
Since $F'(x,t)(0)=f(x)=F'(x,t)(1)$, 
one can consider $F'(x,t)$ as an element in $\Omega(Q)$. 
It is easy to check $\mca{E}(F'(x,t)) < a$ for any $(x,t)  \in P \times [0,1]$, hence 
$F'(P \times [0,1]) \subset \Omega^a(Q)$. 
It is also easy to check that $F'$ satisfies F'-(i), (ii), in a similar way as in the 
proof of $c^\Lambda(Q:\alpha) \le 2jr(Q)$. 
\end{proof}

\subsection{Proof of Corollary \ref{cor:gdloop}}
We conclude this section with a proof of Corollary \ref{cor:gdloop}. 

\begin{proof}
The case $j=1$ is easy and omitted (see \cite{Rot}, pp.501--502).
Hence we may assume that $M$ is simply connected. 
By the Hurewicz theorem, it is enough to show that, if $H_j(M) \ne 0$ then there exists a nontrivial geodesic loop at $p$, 
of length $\le 2j \diam(M)$. 

Let $\rho(M)$ be the injectivity radius of $M$. 
For any $\ep < \rho(M)$, let 
$Q_\ep:= \{ x \in M \mid \dist(x,p) \ge \ep \}$. 
Then, it is clear that $r(Q_\ep) \le \diam(M) -\ep < \diam (M)$. 
Moreover, $H_j(Q_\ep, \partial Q_\ep) \cong H_j(M) \ne 0$. 

We apply Theorem \ref{thm:short} for $Q_\ep$. Then, there exists a brake billiard trajectory 
$\gamma_\ep$ on $Q_\ep$, such that $\len(\gamma_\ep) \le 2j r(Q_\ep) < 2j \diam(M)$. 
We set $\tau_\ep:= \min \{ t >0 \mid \gamma_\ep(t) \in \partial Q_\ep \}$, and define 
$\Gamma_\ep:[0,1] \to Q_\ep$ by  $\Gamma_\ep(t):= \gamma_\ep(\tau_\ep t)$. 
Since $\Gamma_\ep(0), \Gamma_\ep(1) \in \partial Q_\ep$ and 
$\len (\Gamma_\ep) \le \len(\gamma_\ep) < 2j \diam(M)$, a certain subsequence of 
$(\Gamma_\ep)_\ep$ converges to a geodesic loop $\Gamma:[0,1] \to M$ at $p$ such that 
$\len(\Gamma) \le 2j \diam (M)$. 

We have to check that $\Gamma$ is nonconstant. 
Since $\dot{\Gamma}_\ep(0)$ is perpendicular to $\partial Q_\ep$ and nonzero, 
$\Gamma_\ep([0,1])$ intersects $S:=\{ x \in M \mid \dist(x,p) = \rho(M) \}$. 
Hence $\Gamma([0,1])$ also intersects $S$. Since $p \notin S$, $\Gamma$ is nonconstant. 
\end{proof}

\section{Shortest periodic billiard trajectory on a convex body}

The goal of this section is to prove Theorem \ref{thm:BruMin} and Theorem \ref{thm:Ghomi} by our method. 
A recent paper \cite{ABKS} obtains similar proofs based on \cite{BB}. 
Although several results in this section were already obtained in \cite{BB}, here we include them for the sake of 
completeness.

First let us introduce some notations. 
Let $K \subset \R^n$ be a convex body with smooth boundary.   
\begin{itemize}
\item
We abbreviate $c^\Omega(K: [K, \partial K])$ as $c^\Omega(K)$. 
\item
$\mca{P}(K)$ denotes the set of periodic billiard trajectories on $K$. 
\item
$\mca{P}^+(K)$ denotes the set consisting of piecewise geodesic curves $\gamma:S^1 \to \R^n$ such that
$\gamma(S^1)+x \not \subset \interior K$ for any $x \in \R^n$. 
\item
For any $\nu \in \R^n$ and compact set $S \subset \R^n$, 
$h(S:\nu):= \max\{ s \cdot \nu \mid s \in S\}$. 
\item
For any $q \in \partial K$, $\nu(q)$ denotes the unit vector which is outer normal to $\partial K$ at $q$.
\end{itemize} 

\begin{lem}\label{lem:convex-1} 
Let $K$ be a convex body with smooth boundary, and 
$\gamma:S^1 \to \R^n$ be a piecewise geodesic curve. 
If there exists $\mca{N} \subset \R^n \setminus \{(0,\ldots,0)\}$ such that 
$(0,\ldots,0) \in \conv(\mca{N})$ and $h(K:\nu) \le h(\gamma(S^1):\nu)$ for any $\nu \in \mca{N}$, 
then $\gamma \in \mca{P}^+(K)$. 
\end{lem}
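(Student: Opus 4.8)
The plan is to argue by contradiction. Suppose $\gamma \notin \mca{P}^+(K)$, so that $\gamma(S^1) + x \subset \interior K$ for some $x \in \R^n$. Since $\gamma$ is piecewise geodesic, hence continuous, and $S^1$ is compact, the translate $\gamma(S^1)+x$ is a compact subset of the open set $\interior K$; therefore there exists $\ep_0>0$ such that the closed $\ep_0$-neighborhood of $\gamma(S^1)+x$ is still contained in $K$. This uniform buffer is the only point where I need to do anything beyond formal manipulation, and it is where the compactness of $\gamma(S^1)$ (and openness of $\interior K$) enters.

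Next I would compare support functions in the directions $\nu \in \mca{N}$. For each such $\nu$, the hypothesis gives $h(K:\nu) \le h(\gamma(S^1):\nu)$. On the other hand, since the closed $\ep_0$-ball around every point of $\gamma(S^1)+x$ lies in $K$, we have $h(K:\nu) \ge h(\gamma(S^1)+x:\nu) + \ep_0|\nu| = h(\gamma(S^1):\nu) + x\cdot\nu + \ep_0|\nu|$, using that $h(\cdot:\nu)$ is additive under Minkowski sum and that $h(\gamma(S^1)+x:\nu)=h(\gamma(S^1):\nu)+x\cdot\nu$. Combining the two inequalities yields $x\cdot\nu \le -\ep_0|\nu| < 0$ for every $\nu \in \mca{N}$.

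Finally I would invoke $(0,\ldots,0) \in \conv(\mca{N})$: write $(0,\ldots,0) = \sum_i t_i \nu_i$ with $\nu_i \in \mca{N}$, $t_i \ge 0$, $\sum_i t_i = 1$. Pairing with $x$ gives $0 = x\cdot(0,\ldots,0) = \sum_i t_i\,(x\cdot\nu_i)$; but every summand is $\le 0$ and at least one coefficient $t_i$ is strictly positive with $x\cdot\nu_i<0$, so the right-hand side is strictly negative, a contradiction. Hence no such $x$ exists and $\gamma \in \mca{P}^+(K)$.

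The argument has no real obstacle: the only subtlety is that I must obtain the strict inequality $x\cdot\nu<0$ rather than merely $x\cdot\nu\le 0$, since a non-strict inequality would be perfectly consistent with $(0,\ldots,0)\in\conv(\mca{N})$; this is exactly what the $\ep_0$-neighborhood step secures. I would also remark that smoothness of $\partial K$ is not used here — only that $K$ is compact and convex with nonempty interior.
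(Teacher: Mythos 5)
Your proof is correct and is essentially the paper's argument run in contrapositive form: both compare support functions of $K$ and of translates of $\gamma(S^1)$ in the directions of $\mca{N}$, and both use $(0,\ldots,0)\in\conv(\mca{N})$ to produce a direction in which the translation cannot strictly decrease the support function. The paper argues directly (for any $x$ pick $\nu\in\mca{N}$ with $x\cdot\nu\ge 0$ and note that a compact subset of $\interior K$ has strictly smaller support function), while you secure the needed strictness via the $\ep_0$-neighborhood; the two are interchangeable.
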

\begin{proof} 
Take $x \in \R^n$ arbitrarily. 
Since $(0,\ldots,0) \in \conv(\mca{N})$, 
there exists $\nu \in \mca{N}$ such that $x \cdot \nu \ge 0$. 
Thus $h(\gamma(S^1)+x : \nu) \ge h(\gamma(S^1):\nu) \ge h(K:\nu)$. 
Since $\nu \ne 0$, this shows that $\gamma(S^1) + x \not \subset \interior K$. 
\end{proof} 

\begin{lem}\label{lem:convex-2}
Any $\gamma \in \mca{P}(K)$ satisfies the assumption in Lemma \ref{lem:convex-1} with 
$\mca{N}:= \{ \nu(\gamma(t)) \mid t \in \mca{B}_\gamma\}$.
In particular, $\mca{P}(K) \subset \mca{P}^+(K)$. 
\end{lem}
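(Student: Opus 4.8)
The plan is to show that for a periodic billiard trajectory $\gamma \in \mca{P}(K)$, the set of outer normals $\mca{N} = \{\nu(\gamma(t)) \mid t \in \mca{B}_\gamma\}$ at the bounce points satisfies both hypotheses of Lemma \ref{lem:convex-1}: first, $(0,\ldots,0) \in \conv(\mca{N})$, and second, $h(K:\nu) \le h(\gamma(S^1):\nu)$ for every $\nu \in \mca{N}$. The second condition is the easier one: if $\nu = \nu(\gamma(t_0))$ for some bounce time $t_0 \in \mca{B}_\gamma$, then $\gamma(t_0) \in \partial K$, and since $\nu(\gamma(t_0))$ is the outer unit normal to $\partial K$ at $\gamma(t_0)$, the supporting hyperplane property of convex bodies gives $h(K:\nu) = \gamma(t_0) \cdot \nu$; on the other hand $\gamma(t_0) \in \gamma(S^1)$, so $h(\gamma(S^1):\nu) \ge \gamma(t_0)\cdot\nu = h(K:\nu)$, as required.

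For the first condition, $(0,\ldots,0) \in \conv(\mca{N})$, the plan is to use the equation of motion together with the law of reflection. On each arc of $S^1 \setminus \mca{B}_\gamma$ we have $\ddot\gamma \equiv 0$, so $\dot\gamma$ is piecewise constant, and the total change in $\dot\gamma$ around the loop must vanish: summing the jumps $\dot\gamma^+(t) - \dot\gamma^-(t)$ over $t \in \mca{B}_\gamma$ gives zero (this is just $\int_{S^1} \ddot\gamma \, dt = 0$ interpreted distributionally, using periodicity). By the law of reflection B-(ii), each jump $\dot\gamma^+(t) - \dot\gamma^-(t)$ is a nonzero element of $(T_{\gamma(t)}\partial Q)^\perp$, hence a nonzero scalar multiple of $\nu(\gamma(t))$; writing $\dot\gamma^+(t) - \dot\gamma^-(t) = -2\lambda_t \nu(\gamma(t))$ for the appropriate constant $\lambda_t$, one checks from the reflection law (the fact that $|\dot\gamma^+| = |\dot\gamma^-|$ and that $\dot\gamma^+ + \dot\gamma^- \in T_{\gamma(t)}\partial Q$) that $\lambda_t > 0$: indeed $\lambda_t = \langle \dot\gamma^-(t), \nu(\gamma(t))\rangle$ is the inward-pointing-ness of the incoming velocity, which is positive since $\gamma$ comes from the interior, strikes the boundary transversally, and $\dot\gamma^+(t) - \dot\gamma^-(t) \ne 0$. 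Thus $\sum_{t \in \mca{B}_\gamma} \lambda_t \nu(\gamma(t)) = 0$ with all $\lambda_t > 0$, and normalizing by $\sum_t \lambda_t > 0$ exhibits the origin as a convex combination of elements of $\mca{N}$.

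The main obstacle I anticipate is bookkeeping the sign of $\lambda_t$ cleanly: one must argue that at a genuine bounce the trajectory is approaching $\partial K$ from inside just before $t$ and leaving into the interior just after, so that $\langle \dot\gamma^-(t), \nu(\gamma(t))\rangle > 0$; this uses that $\gamma$ maps into $K$, that $\gamma(t) \in \partial K$ is a strict local intersection (by convexity and the fact that the jump is nonzero, so $\dot\gamma$ is genuinely transverse), and the definition of outer normal. Once the sign is pinned down, the conclusion $\mca{P}(K) \subset \mca{P}^+(K)$ follows immediately by invoking Lemma \ref{lem:convex-1}. A minor point to handle is that $\mca{B}_\gamma \ne \emptyset$ is needed for $\mca{N}$ to be nonempty and for the convex-combination argument to make sense; if $\gamma$ is a closed geodesic on $K$ (so $\mca{B}_\gamma = \emptyset$), then $\gamma$ lies in $\interior K$ only if... — but in fact a closed geodesic of the ambient flat $\R^n$ contained in the convex body $K$ would be a straight line, impossible for a closed curve, so any $\gamma \in \mca{P}(K)$ necessarily has $\mca{B}_\gamma \ne \emptyset$, which I will note at the start of the argument.
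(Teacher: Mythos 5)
Your proposal is correct and follows essentially the same route as the paper: the supporting-hyperplane identity $h(K:\nu(\gamma(t)))=\gamma(t)\cdot\nu(\gamma(t))$ handles the second hypothesis, and the first is the observation that the total velocity jump around the loop vanishes while each jump $\dot{\gamma}^-(t)-\dot{\gamma}^+(t)$ is a positive multiple of $\nu(\gamma(t))$. The paper phrases the latter step as a contradiction with a separating functional $x$ rather than writing out the convex weights $\lambda_t$ explicitly, but this is the same argument in dual form; your remark that $\mca{B}_\gamma\ne\emptyset$ (no closed geodesics in flat $\R^n$) is a small point the paper leaves implicit.
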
 
\begin{proof} 
For any $t \in \mca{B}_\gamma$, 
there holds $h(K:\nu(\gamma(t)))= \gamma(t) \cdot \nu(\gamma(t))$ since $K$ is convex. 
Hence $h(K:\nu) = h(\gamma(S^1):\nu)$ for any $\nu \in \mca{N}$. 

Suppose that $(0,\ldots,0) \notin \conv(\mca{N})$. 
Since $\mca{N}$ is a finite set, 
there exists $x \in \R^n$ such that $x \cdot \nu >0$ for any $\nu \in \mca{N}$. 
Since $\ddot{\gamma} \equiv 0$ on $S^1 \setminus \mca{B}_\gamma$, there exists $t \in \mca{B}_\gamma$ such that 
$x \cdot (\dot{\gamma}^-(t) - \dot{\gamma}^+(t)) \le 0$. 
On the other hand, it is easy to see that $\nu(\gamma(t))= \dot{\gamma}^-(t) - \dot{\gamma}^+(t)/|\dot{\gamma}^-(t) - \dot{\gamma}^+(t)|$. 
Thus we have $x \cdot \nu(\gamma(t)) \le 0$. 
This is a contradiction, thus $(0,\ldots,0) \in \conv(\mca{N})$. 
\end{proof} 

The following proposition is a key step in the proof. 

\begin{prop}\label{prop:convex-3}
For any $\gamma \in \mca{P}^+(K)$, there holds $c^\Omega(K) \le \len(\gamma)$. 
\end{prop}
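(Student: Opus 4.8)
The plan is to prove the equivalent statement that, for every $a>\len(\gamma)^2/2$, the map $I^{\Omega,a}_0([K,\partial K])=0$; since $c^\Omega(K)=c^\Omega_0([K,\partial K])$ by the $\Omega$-analogue of Lemma \ref{lem:capacity}, this gives $c^\Omega(K)\le\len(\gamma)$. Concretely I would build a homotopy of pairs which realizes this vanishing, exactly as the loop-space homotopy $F'$ in the proof of Proposition \ref{prop:bound}, but now adapted to the given curve $\gamma$ instead of to short paths to the boundary.

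Fix $a>\len(\gamma)^2/2$, reparametrize $\gamma:S^1\to\R^n$ proportionally to arclength so that $\mca{E}(\gamma)=\len(\gamma)^2/2$, and set $\gamma_0:=\gamma(0)$. Let $\pi_K:\R^n\to K$ be the nearest-point projection onto the convex body $K$, which is well-defined and $1$-Lipschitz, and define
\[
F:K\times[0,1]\to\Omega(K),\qquad F(q,t)(s):=\pi_K\bigl(t(\gamma(s)-\gamma_0)+q\bigr).
\]
I would then check three points. First, since $\pi_K$ is $1$-Lipschitz and $s\mapsto t(\gamma(s)-\gamma_0)+q$ has energy $t^2\mca{E}(\gamma)$, we get $\mca{E}(F(q,t))\le t^2\len(\gamma)^2/2<a$, so $F(K\times[0,1])\subset\Omega^a(K)$; together with a (standard) argument that $\zeta\mapsto\pi_K\circ\zeta$ is continuous in the $W^{1,2}$-topology, $F$ is continuous. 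Second, $F(q,0)(s)=\pi_K(q)=q$ for all $q\in K$, so $F(\cdot,0)$ is the inclusion $q\mapsto l_q$; and for $q\in\partial K$ one has $F(q,t)(0)=\pi_K(q)=q\in\partial K$, hence $F(q,t)\in\Omega_\partial(K)$ for all $t$. Third — and here the defining property of $\mca{P}^+(K)$ enters — for each $q\in K$ the curve $s\mapsto\gamma(s)-\gamma_0+q$ is a translate of $\gamma$, so by $\gamma\in\mca{P}^+(K)$ its image is not contained in $\interior K$; choosing $s_0$ with $\gamma(s_0)-\gamma_0+q\notin\interior K$ and using that $\pi_K$ maps $\R^n\setminus\interior K$ into $\partial K$, we obtain $F(q,1)(s_0)\in\partial K$, i.e. $F(q,1)\in\Omega_\partial(K)$.

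Thus $F$ is a homotopy of pairs $(K,\partial K)\times[0,1]\to(\Omega^a(K)\cup\Omega_\partial(K),\,\Omega_\partial(K))$ from $q\mapsto l_q$, which induces $I^{\Omega,a}_0$, to $q\mapsto F(q,1)$, whose image lies entirely in $\Omega_\partial(K)$. Since a map of pairs factoring through $(\Omega_\partial(K),\Omega_\partial(K))$ induces the zero map on relative homology, $I^{\Omega,a}_0([K,\partial K])=0$; letting $a\downarrow\len(\gamma)^2/2$ finishes the proof. I expect the only genuinely delicate step to be the $W^{1,2}$-continuity and regularity of $\zeta\mapsto\pi_K\circ\zeta$, since $\pi_K$ is merely Lipschitz rather than smooth; this is handled by a routine approximation argument (approximating $\pi_K$ near $\partial K$ by smooth retractions that are $1$-Lipschitz up to $o(1)$, using that all curves involved have uniformly bounded energy and image in the compact set $K$), the sort of technicality already implicit in Sections 2–3. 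Everything else is a direct verification.
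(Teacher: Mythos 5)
Your argument is correct in outline, but it takes a genuinely different route from the paper's. The paper first reduces, via Lemma \ref{lem:capacity} and excision, to showing that $H_n(\R^n,\R^n\setminus K(\delta))\to H_n(\Omega^a(\R^n),\Omega^a(\R^n)\setminus\Omega(K(\delta)))$ vanishes; it then represents the class by a large ball $B_R$ of translations and uses the affine homotopy $F(w,s)(t)=w+s\gamma(t)$ in the ambient $\R^n$, whose continuity is trivial, concluding because $(B_R,\partial B_R)\to(B_R\times[0,1],\partial B_R\times[0,1]\cup B_R\times\{1\})$ kills $H_n$. You instead stay inside $K$, parametrize by $K$ itself, and push the translated, rescaled copies of $\gamma$ back into $K$ with the nearest-point projection $\pi_K$, killing $I^{\Omega,a}_0$ directly. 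The core mechanism is identical (translates of scalings of $\gamma$ sweep the fundamental class into the boundary-touching subset, with $\mca{P}^+(K)$ used at time $1$ in exactly the same way), and your route is arguably more direct, but it pays for this with the one point you flag: continuity of $\zeta\mapsto\pi_K\circ\zeta$ on $W^{1,2}$, which is not automatic for a map that is merely Lipschitz across $\partial K$. Your proposed repair works but needs one further check: a smooth retraction of $\R^n$ onto $K$ that sends $\R^n\setminus\interior K$ \emph{exactly} into $\partial K$ must compress a collar onto $\partial K$ and hence cannot be $1$-Lipschitz, only $(1+\eta)$-Lipschitz; the resulting $(1+\eta)^2$ loss in energy is absorbed by the strict inequality $a>\len(\gamma)^2/2$, so this is fine, and alternatively one can switch to the $I^{\Omega,a}_1$ (i.e. $\Omega_\delta$) version of the capacity, where landing within $\delta$ of $\partial K$ suffices and any smooth $C^0$-approximation of $\pi_K$ does the job. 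The paper's excision step exists precisely to sidestep all of this regularity bookkeeping.
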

\begin{proof}
It is enough to show that, for any $a > \len(\gamma)^2/2$ and $\delta>0$, 
the homomorphism
\[
H_n( \interior K, \interior K \setminus K(\delta)) \to 
H_n( \Omega^a(\interior K), \Omega^a(\interior K) \cap \Omega_\delta(\interior K)) 
\]
is zero. By the excision property, this is equivalent to show that
\[
H_n(\R^n, \R^n \setminus K(\delta)) \to 
H_n(\Omega^a(\R^n), \Omega^a(\R^n) \setminus \Omega(K(\delta)))
\]
is zero. 
By reparametrization of $\gamma$, we may assume that 
$\mca{E}(\gamma) = \len(\gamma)^2/2$, thus $\mca{E}(\gamma)<a$. 

Let us set $B_R:= \{ x \in \R^n \mid |x| \le R \}$ for any $R>0$. 
We define 
$F: B_R \times [0,1] \to \Omega^a(\R^n)$ by 
$F(w,s)(t):= w + s \gamma(t)$. 
When $R$ is sufficiently large, $w+ s \gamma(S^1) \not \subset K(\delta)$ for any $w \in \partial B_R$ and $0 \le s \le 1$. 
Moreover, $w + \gamma(S^1)  \not\subset K(\delta)$ for any $w \in B_R$, 
since $\gamma \in \mca{P}^+(K)$. 
Thus, setting $P:= B_R \times [0,1]$, $P':= \partial B_R \times [0,1] \cup B_R \times \{1\}$, we have 
\[
F: (P,P') \to (\Omega^a(\R^n) , \Omega^a(\R^n) \setminus \Omega(K(\delta))). 
\] 
Setting 
$i: (B_R, \partial B_R) \to (P,P'); x \mapsto (x,0)$, 
we have the following commutative diagram: 
\[
\xymatrix{
H_n(B_R, \partial B_R) \ar[r]^{i_*}\ar[d]& H_n(P,P') \ar[d]^{F_*} \\
H_n(\R^n,\R^n \setminus K(\delta) ) \ar[r]  & H_n(\Omega^a(\R^n) , \Omega^a(\R^n) \setminus \Omega(K(\delta))). 
}
\]
Since $K(\delta)$ is also convex, the left vertical arrow is an isomorphism. 
On the other hand, $i_*=0$. Thus the bottom homomorphism is zero. 
\end{proof} 

\begin{cor}\label{cor:convex-4}
Let us define $\mu_P^+(K):= \inf \{ \len(\gamma) \mid \gamma \in \mca{P}^+(K) \}$. 
Then $c^\Omega(K)=\mu_P(K)=\mu_P^+(K)$. 
\end{cor}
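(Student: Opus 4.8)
The plan is to prove the chain of equalities $c^\Omega(K) = \mu_P(K) = \mu_P^+(K)$ by establishing the two-sided inequalities $c^\Omega(K) \le \mu_P^+(K) \le \mu_P(K)$ and $\mu_P(K) \le c^\Omega(K)$, which together force all three quantities to coincide. The inclusion $\mca{P}(K) \subset \mca{P}^+(K)$ from Lemma \ref{lem:convex-2} gives immediately $\mu_P^+(K) \le \mu_P(K)$, since the infimum defining $\mu_P^+(K)$ is taken over a larger set. Next, Proposition \ref{prop:convex-3} states that $c^\Omega(K) \le \len(\gamma)$ for \emph{every} $\gamma \in \mca{P}^+(K)$; taking the infimum over all such $\gamma$ yields $c^\Omega(K) \le \mu_P^+(K)$. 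So far we have $c^\Omega(K) \le \mu_P^+(K) \le \mu_P(K)$.

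It remains to prove $\mu_P(K) \le c^\Omega(K)$, which is where the real content lies. First one should check $c^\Omega(K) < \infty$: this follows from Proposition \ref{prop:bound} applied with $\alpha = [K,\partial K] \in H_n(K,\partial K)$, giving $c^\Omega(K) \le 2(n+1)r(K) < \infty$. (Alternatively, $\mca{P}^+(K)$ is nonempty for elementary reasons, so the bound $c^\Omega(K) \le \mu_P^+(K)$ already established suffices.) Moreover $[K,\partial K] \ne 0$ in $H_n(K,\partial K;\Z)$ since $K$ is an $n$-dimensional manifold with boundary, so by Lemma \ref{lem:nonvanishing} we have $c^\Omega(K) > 0$. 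Now apply Lemma \ref{lem:spectrality}(ii) with $\alpha = [K,\partial K]$ and $j = n$: since $c^\Omega(K) < \infty$, there exists a periodic billiard trajectory $\gamma$ on $K$ with $\len(\gamma) = c^\Omega(K)$. In particular $\gamma \in \mca{P}(K)$, so $\mu_P(K) \le \len(\gamma) = c^\Omega(K)$.

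Combining the two chains, $c^\Omega(K) \le \mu_P^+(K) \le \mu_P(K) \le c^\Omega(K)$, so all three are equal. The main obstacle in this argument is not in the logical assembly, which is routine, but rests entirely on the already-established Proposition \ref{prop:convex-3} and Lemma \ref{lem:spectrality}(ii); the one point requiring a moment's care is verifying the hypotheses of Lemma \ref{lem:spectrality}(ii), namely that $[K,\partial K]$ is a nonzero class and that $c^\Omega(K)$ is finite, both of which we have dispatched above using Lemma \ref{lem:nonvanishing} and Proposition \ref{prop:bound}.
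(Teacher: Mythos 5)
Your proposal is correct and follows essentially the same route as the paper: the paper's proof is exactly the cyclic chain $c^\Omega(K) \ge \mu_P(K)$ (from Lemma \ref{lem:spectrality}), $\mu_P(K) \ge \mu_P^+(K)$ (from $\mca{P}(K) \subset \mca{P}^+(K)$), and $\mu_P^+(K) \ge c^\Omega(K)$ (from Proposition \ref{prop:convex-3}). Your additional verification of the hypotheses of Lemma \ref{lem:spectrality} (finiteness via Proposition \ref{prop:bound} and nonvanishing of $[K,\partial K]$) is left implicit in the paper but is a correct and worthwhile point of care.
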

\begin{proof}
Lemma \ref{lem:spectrality} shows $c^\Omega(K) \ge \mu_P(K)$. 
$\mca{P}(K) \subset \mca{P}^+(K)$ shows $\mu_P(K) \ge \mu_P^+(K)$. 
Proposition \ref{prop:convex-3} shows $\mu_P^+(K) \ge c^\Omega(K)$. 
\end{proof} 
\begin{rem}\label{rem:shortest}
The identity $c^\Omega(K)=\mu_P(K)$ implies that there exists a shortest periodic billiard trajectory on $K$, 
since Lemma \ref{lem:spectrality} shows that there exists a periodic billiard trajectory $\gamma$ on $K$ such that $\len(\gamma)=c^\Omega(K)$. 
\end{rem}

The identity $\mu_P(K)=\mu_P^+(K)$ can be considered as a variational characterization of $\mu_P$. 
The same result is established in \cite{BB} (see also \cite{ABKS}, Theorem 2.1). 
As an immediate consequence, we can recover the following result, which was already obtained in Proposition 1.4 \cite{ArOs}
(see also \cite{ABKS} Section 2.2). 

\begin{cor}[\cite{ArOs}]\label{cor:convex-4.5}
Let $K_1 \subset K_2$ be convex bodies with smooth boundaries. Then $\mu_P(K_1) \le \mu_P(K_2)$. 
\end{cor}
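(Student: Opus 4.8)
The plan is to deduce this from the variational characterization $\mu_P(K)=\mu_P^+(K)$ established in Corollary \ref{cor:convex-4}, which replaces the dynamically defined quantity $\mu_P$ by the infimum of $\len$ over the class $\mca{P}^+(K)$ of piecewise geodesic loops $\gamma\colon S^1\to\R^n$ with $\gamma(S^1)+x\not\subset\interior K$ for every $x\in\R^n$. The key observation is that this class is \emph{monotone decreasing} in $K$: if $K_1\subset K_2$ then $\interior K_1\subset\interior K_2$ (since $\interior K_1$ is open and contained in $K_1\subset K_2$), so any loop $\gamma$ none of whose translates lies inside $\interior K_2$ automatically has no translate lying inside $\interior K_1$. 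Hence $\mca{P}^+(K_2)\subset\mca{P}^+(K_1)$.

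Concretely, I would proceed in three short steps. First, apply Corollary \ref{cor:convex-4} to both bodies, obtaining $\mu_P(K_i)=\mu_P^+(K_i)$ for $i=1,2$; this is legitimate because $\partial K_1$ and $\partial K_2$ are smooth by hypothesis. Second, record the inclusion $\mca{P}^+(K_2)\subset\mca{P}^+(K_1)$ just explained. Third, observe that the infimum of the length functional over a larger set is no larger, so $\mu_P^+(K_1)\le\mu_P^+(K_2)$. Combining the three relations gives $\mu_P(K_1)=\mu_P^+(K_1)\le\mu_P^+(K_2)=\mu_P(K_2)$, which is the assertion. (If one wishes, one may note $\mca{P}^+(K_2)$ is nonempty — e.g.\ $\mca{P}(K_2)\subset\mca{P}^+(K_2)$ by Lemma \ref{lem:convex-2}, and $K_2$ carries periodic billiard trajectories by Theorem \ref{thm:short}, cf.\ Remark \ref{rem:shortest} — but this is not needed, since if it were empty the inequality would read $\mu_P(K_1)\le\infty$.)

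I do not expect any genuine obstacle here: the entire content is the translation-based, "outer" description $\mca{P}^+$ of $\mu_P$ from Corollary \ref{cor:convex-4}, after which the monotonicity is purely set-theoretic. The only minor point of care is that the comparison of infima is over a nested pair of classes of loops (so the direction of the inequality is the one stated), and that no regularity or convexity assumption on the difference $K_2\setminus K_1$ is used — smoothness of the two boundaries enters solely to license the application of Corollary \ref{cor:convex-4}. In particular no billiard trajectory on $K_1$ is constructed explicitly.
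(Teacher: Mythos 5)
Your proof is correct and is essentially identical to the paper's: both deduce the claim from $\mu_P=\mu_P^+$ (Corollary \ref{cor:convex-4}) together with the inclusion $\mca{P}^+(K_2)\subset\mca{P}^+(K_1)$. The paper states this inclusion as obvious, whereas you spell out the one-line justification; there is no substantive difference.
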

\begin{proof}
It is obvious that $\mca{P}^+(K_2) \subset \mca{P}^+(K_1)$. Then we have 
$\mu_P(K_1) = \mu_P^+(K_1) \le \mu_P^+(K_2) = \mu_P(K_2)$.
\end{proof} 

We also need the following Lemma \ref{lem:convex-5} 
to determine when equality holds in Theorem \ref{thm:BruMin} and Theorem \ref{thm:Ghomi}.

\begin{lem}\label{lem:convex-5}
Suppose that $\gamma \in \mca{P}^+(K)$ satisfies $\len(\gamma)=\mu_P^+(K)$, and 
$|\dot{\gamma}(t)|$ is constant for all $t$ such that $\dot{\gamma}(t)$ exists. 
Then, up to parallel displacement, one has $\gamma \in \mca{P}(K)$. 
\end{lem}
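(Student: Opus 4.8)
The plan is to show that the minimizing curve $\gamma \in \mca{P}^+(K)$ is forced, after a translation, to be a genuine billiard trajectory by a first-variation argument: since $\gamma$ minimizes length among curves in $\mca{P}^+(K)$ and this class is invariant under translations, any competitor obtained by a small compactly-supported perturbation that stays in $\mca{P}^+(K)$ cannot decrease the length, so $\gamma$ must satisfy the Euler--Lagrange conditions of the billiard problem wherever it is free to move. First I would normalize the parametrization so that $|\dot\gamma|$ is the constant $\len(\gamma)$ on $S^1 \setminus \mca{B}_\gamma$, where $\mca{B}_\gamma$ is the (finite) set of points where $\gamma$ is not smooth; then $\mca{E}(\gamma)=\len(\gamma)^2/2$ and minimizing length is the same as minimizing $\mca{E}$ within $\mca{P}^+(K)$ after reparametrization. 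By Lemma \ref{lem:convex-1} and Lemma \ref{lem:convex-2}, membership in $\mca{P}^+(K)$ is witnessed by the finite set $\mca{N}=\{\nu(\gamma(t)) \mid t \in \mca{B}_\gamma\}$ of outer normals at the contact points, with $(0,\ldots,0)\in\conv(\mca{N})$.

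The key steps, in order. (1) Away from $\mca{B}_\gamma$: if $\gamma$ had a corner or failed to be a straight segment at some $t_0 \notin \mca{B}_\gamma$, I would shorten $\gamma$ locally near $t_0$ by replacing a small arc with a chord; since $\gamma(t_0)$ is away from the finitely many contact points, the perturbed curve still has the same supporting normals $\mca{N}$ in a neighborhood, hence still lies in $\mca{P}^+(K)$ by Lemma \ref{lem:convex-1}, and it is strictly shorter — contradicting minimality. So $\ddot\gamma \equiv 0$ on $S^1\setminus\mca{B}_\gamma$; this is where the hypothesis that $|\dot\gamma|$ is constant is used to rule out degenerate reparametrizations of corners. (2) At a point $t_0\in\mca{B}_\gamma$: I need $\gamma(t_0)\in\partial K$ and the law of reflection. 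First, translate $\gamma$ so that it touches $\partial K$; because $(0,\ldots,0)\in\conv(\mca{N})$ and $K$ is convex, a curve of $\mca{P}^+(K)$ of minimal length must actually meet $\partial K$ at each $t\in\mca{B}_\gamma$ (if it stayed in $\interior K$ near all contact points one could translate it slightly into $\interior K$ and then homothetically shrink, strictly decreasing length while staying in $\mca{P}^+(K)$ — again using Lemma \ref{lem:convex-1} with the same $\mca{N}$). (3) With $\gamma(t_0)\in\partial K$, perform a variation $v$ supported near $t_0$ with $v(t_0)$ tangent to $\partial K$: such a variation keeps the endpoint on (or can be pushed back to) $\partial K$ and does not destroy the supporting-hyperplane conditions at the \emph{other} contact points, so the perturbed curve stays in $\mca{P}^+(K)$; minimality of $\mca{E}$ forces $\langle \dot\gamma^+(t_0)-\dot\gamma^-(t_0),\zeta\rangle=0$ for all $\zeta\in T_{\gamma(t_0)}\partial K$, i.e. $\dot\gamma^+(t_0)-\dot\gamma^-(t_0)\in (T_{\gamma(t_0)}\partial K)^\perp$; convexity and the fact that $\gamma$ approaches $\partial K$ from inside force this normal vector to point outward, and $|\dot\gamma^+(t_0)|=|\dot\gamma^-(t_0)|=\len(\gamma)\ne 0$ forces $\dot\gamma^+(t_0)+\dot\gamma^-(t_0)\in T_{\gamma(t_0)}\partial K$ and $\dot\gamma^+(t_0)-\dot\gamma^-(t_0)\ne 0$. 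Together these are exactly B-(i) and B-(ii), so (after the translation in step (2)) $\gamma\in\mca{P}(K)$.

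The main obstacle I expect is step (2)–(3): making precise that a compactly-supported variation near one contact point genuinely produces an admissible competitor in $\mca{P}^+(K)$, i.e. that perturbing near $t_0$ does not spoil the inequalities $h(K:\nu)\le h(\gamma(S^1):\nu)$ for the normals $\nu$ coming from the \emph{remaining} bounce points. The cleanest way around this is to argue with the supporting hyperplanes directly via Lemma \ref{lem:convex-1}: one only needs that, after the perturbation and a suitable translation, there is still a finite set $\mca{N}'$ of unit vectors with $0\in\conv(\mca{N}')$ and $h(K:\nu)\le h(\gamma'(S^1):\nu)$; taking $\mca{N}'$ to consist of the unchanged normals at the untouched bounce points plus the new normal at the perturbed bounce point, and using that a short variation moves the touching point only slightly, this is a small-perturbation/continuity statement. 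Once admissibility of competitors is granted, everything else is the standard first- and second-variation bookkeeping that already appears in Section 3.
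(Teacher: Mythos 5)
Your overall route is the same as the paper's: treat $\gamma$ as a length minimizer in $\mca{P}^+(K)$, produce strictly shorter competitors by local modifications (cutting corners off $\partial K$, tangential displacement of corners on $\partial K$), and certify that each competitor remains in $\mca{P}^+(K)$ via Lemma \ref{lem:convex-1}. Two steps, however, are not yet sound as written. First, you claim $(0,\ldots,0)\in\conv(\mca{N})$ ``by Lemma \ref{lem:convex-1} and Lemma \ref{lem:convex-2}''; but Lemma \ref{lem:convex-2} applies only to genuine billiard trajectories $\gamma\in\mca{P}(K)$, which is precisely what you are trying to prove, so this is circular. The fact must be proved directly for the minimizer: with $\mca{N}$ the set of outer normals at those corners that actually lie on $\partial K$, if $(0,\ldots,0)\notin\conv(\mca{N})$ there is $x$ with $x\cdot\nu<0$ for every $\nu\in\mca{N}$, and then $\gamma(S^1)+cx\subset\interior K$ for small $c>0$, contradicting $\gamma\in\mca{P}^+(K)$. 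Your step (2) gestures at this translation idea but states the conclusion backwards (``strictly decreasing length while staying in $\mca{P}^+(K)$''): the whole point is that the translated curve lies in $\interior K$ and hence is \emph{not} in $\mca{P}^+(K)$. Second, the preliminary normalization ``up to parallel displacement $\gamma(S^1)\subset K$'' needs its own argument; the paper gets it by noting that $(1-\ep)\gamma$ is shorter than $\mu_P^+(K)$, hence not in $\mca{P}^+(K)$, hence some translate of it lies in $\interior K$, and then letting $\ep\to 0$ along a convergent subsequence of translations.

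Once these are supplied, the remaining bookkeeping matches the paper: for the competitors you only need the \emph{unchanged} set $\mca{N}$ in Lemma \ref{lem:convex-1} (the untouched corners stay on the curve, and a tangential displacement $v\perp\nu_j$ preserves $h(\gamma'(S^1):\nu_j)\ge \gamma(t_j)\cdot\nu_j=h(K:\nu_j)$), so there is no need for a ``new normal at the perturbed bounce point,'' and the perturbed vertex need not lie in $K$ at all. Also note that your step (1) is vacuous with $\mca{B}_\gamma$ defined as the set of non-smooth points; the substantive statement is that no corner of $\gamma$ lies in $\interior K$, which is exactly the chord-cutting argument you describe.
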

\begin{proof}
For any $\ep>0$, we set $\gamma_\ep(t):= (1-\ep)\gamma(t)$. 
Since $\len(\gamma_\ep) < \len(\gamma) = \mu_P^+(K)$, one has $\gamma_\ep \notin \mca{P}^+(K)$. 
There exists $x_\ep \in \R^n$ such that $x_\ep + \gamma_\ep(S^1) \subset \interior K$ for any $\ep>0$, 
thus by parallel displacement we may assume $\gamma(S^1) \subset K$. 
We show that $\gamma \in \mca{P}(K)$. 

Take $0=t_0 < t_1< \cdots < t_m=1$ so that 
$\gamma|_{[t_{j-1},t_j]}$ are geodesics and 
$\dot{\gamma}^-(t_j) \ne \dot{\gamma}^+(t_j)$ for all $j$. 
We set $J:= \{ 1 \le j \le m \mid \gamma(t_j) \in \partial K\}$. 
For each $j \in J$, let us abbreviate $\nu(\gamma(t_j))$ as $\nu_j$. 
By convexity of $K$, $h(K:\nu_j) = \gamma(t_j) \cdot \nu_j$ for each $j \in J$. 

Let $\mca{N}:= \{\nu_j \mid j \in J\}$. 
If $(0,\ldots,0) \notin \conv(\mca{N})$, there exists $x \in \R^n$ such that $x \cdot \nu_j <0$ for any $j \in J$. 
Thus $\gamma(S^1) + cx \subset \interior K$ for sufficiently small $c>0$. 
This is impossible since $\gamma \in \mca{P}^+(K)$. 
Thus we have shown $(0,\ldots,0) \in \conv(\mca{N})$. 

We show that $J=\{1,\ldots,m\}$. 
If $J \subsetneq \{1,\ldots,m\}$, there exists $\gamma': S^1 \to K$ such that 
$\len(\gamma') < \len(\gamma)$ and $\gamma'(S^1) \supset \{ \gamma(t_j) \mid j \in J\}$. 
For each $j \in J$, one has 
\[
h(K: \nu_j ) = \gamma(t_j) \cdot \nu_j  \le h(\gamma'(S^1): \nu_j ).
\] 
Then Lemma \ref{lem:convex-1} implies $\gamma' \in \mca{P}^+(K)$. 
This is impossible since $\gamma$ has the shortest length in $\mca{P}^+(K)$. 

To prove $\gamma \in \mca{P}(K)$, it is enough to check that $\gamma$ satisfies the law of reflection at every $t_j$. 
If this is not the case, i.e. $\dot{\gamma}^+(t_j) - \dot{\gamma}^-(t_j)$ is not a multiple of 
$\nu_j$ for some $j$, there exists 
$v \in T_{\gamma(t_j)} \partial K$ such that 
\[
|\gamma(t_j) - \gamma(t_{j-1})| + |\gamma(t_{j+1}) - \gamma(t_j)| > 
|\gamma(t_j)+v - \gamma(t_{j-1})| + |\gamma(t_{j+1}) - \gamma(t_j)-v|.
\]
Define $\gamma':S^1 \to \R^n$ so that 
$
\gamma'(t_i) = \begin{cases} 
                \gamma(t_j) + v &(i=j) \\
                \gamma(t_i) &(i \ne j)
               \end{cases}
$
and $\gamma'|_{[t_{i-1},t_i]}$ are geodesics for all $1 \le i \le m$. 
Then $\len(\gamma') < \len(\gamma)$. 
It is easy to check $h( \gamma'(S^1): \nu_i) \ge h(K: \nu_i)$ for any $1 \le i \le m$, thus Lemma \ref{lem:convex-1} implies $\gamma' \in \mca{P}^+(K)$.
This is impossible since  
$\gamma$ has the shortest length in $\mca{P}^+(K)$.  
\end{proof} 

For any two curves $\gamma_i: S^1 \to \R^n \,(i=1,2)$, we define $\gamma_1+\gamma_2:S^1 \to \R^n$ 
by $\gamma_1+\gamma_2(t):= \gamma_1(t)+\gamma_2(t)$. 
The following lemma would be clear from the definition of $\mca{P}^+$. 

\begin{lem}\label{lem:convex-6}
If $\gamma_i(S^1) \notin \mca{P}^+(K_i)$ for $i=1,2$, 
one has $\gamma_1 + \gamma_2 \notin \mca{P}^+(K_1+K_2)$. 
\end{lem}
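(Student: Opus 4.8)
The plan is to unwind the definition of $\mca{P}^+$ and reduce everything to the elementary inclusion $\interior K_1 + \interior K_2 \subset \interior(K_1+K_2)$. I read the hypothesis as $\gamma_i \notin \mca{P}^+(K_i)$ for $i=1,2$, with $\gamma_1,\gamma_2$ piecewise geodesic curves $S^1 \to \R^n$ (the piecewise geodesic condition being understood, as in the intended applications).

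First I would observe that $\gamma_1+\gamma_2$ is again piecewise geodesic: geodesics in $\R^n$ are affine segments, so after refining the two partitions of $S^1$ to a common one, both $\gamma_1$ and $\gamma_2$ are affine on each subinterval, hence so is their pointwise sum. Therefore $\gamma_1+\gamma_2$ is an admissible candidate for membership in $\mca{P}^+(K_1+K_2)$, and it suffices to exhibit some $x \in \R^n$ with $(\gamma_1+\gamma_2)(S^1)+x \subset \interior(K_1+K_2)$.

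Next, since $\gamma_i \notin \mca{P}^+(K_i)$, there exist $x_1, x_2 \in \R^n$ with $\gamma_i(S^1)+x_i \subset \interior K_i$ for $i=1,2$. Then for every $t \in S^1$,
\[
(\gamma_1+\gamma_2)(t) + (x_1+x_2) = \bigl(\gamma_1(t)+x_1\bigr) + \bigl(\gamma_2(t)+x_2\bigr) \in \interior K_1 + \interior K_2.
\]
To conclude I would note that $\interior K_1 + K_2 = \bigcup_{y \in K_2}(\interior K_1 + y)$ is open and contained in $K_1+K_2$, hence contained in $\interior(K_1+K_2)$; since $\interior K_1 + \interior K_2 \subset \interior K_1 + K_2$, this gives $\interior K_1 + \interior K_2 \subset \interior(K_1+K_2)$. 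Combining, $(\gamma_1+\gamma_2)(S^1) + (x_1+x_2) \subset \interior(K_1+K_2)$, so $\gamma_1 + \gamma_2 \notin \mca{P}^+(K_1+K_2)$, as desired.

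There is essentially no obstacle here: the argument is a direct definition-chase, and the only points needing a moment's care are the trivial piecewise-geodesic bookkeeping over $\R^n$ and the one-line open-set argument showing that the sum of the two interiors lies in the interior of the Minkowski sum.
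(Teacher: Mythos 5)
Your proof is correct, and it is exactly the definition-chase the paper has in mind: the paper omits the proof entirely, remarking only that the lemma ``would be clear from the definition of $\mca{P}^+$.'' Your reading of the hypothesis as $\gamma_i \notin \mca{P}^+(K_i)$ and the inclusion $\interior K_1 + \interior K_2 \subset \interior(K_1+K_2)$ are both the intended points.
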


Now we can prove Theorem \ref{thm:BruMin}. 

\begin{proof}[\textbf{Proof of Theorem \ref{thm:BruMin}}]
Let $a_j:= \frac{\mu_P(K_j)}{\mu_P(K_1)+\mu_P(K_2)}$. 
If $\len(\gamma) < \mu_P(K_1) + \mu_P(K_2)$, we have the following inequality for each $j=1,2$:
\[
\len (a_j \gamma) = a_j \cdot \len(\gamma) < \mu_P(K_j) = \mu_P^+(K_j). 
\]
Then $a_j \gamma \notin \mca{P}^+(K_j)$.
By Lemma \ref{lem:convex-6}, 
$\gamma = a_1 \gamma + a_2 \gamma \notin \mca{P}^+(K_1+K_2)$. 
Thus we have shown that $\mu_P^+(K_1+K_2) \ge \mu_P(K_1)+ \mu_P(K_2)$. 
By Corollary \ref{cor:convex-4} we get
\begin{equation}\label{eq:BruMin}
\mu_P(K_1+K_2) = \mu_P^+(K_1+K_2) \ge \mu_P(K_1)+ \mu_P(K_2).
\end{equation}

We have to show that the following two conditions are equivalent: 

\begin{enumerate}
\item[(i):] $\mu_P(K_1+K_2) =\mu_P(K_1)+\mu_P(K_2)$. 
\item[(ii):] There exists a closed curve $\gamma$ which, up to parallel displacement and scaling, 
is a shortest periodic billiard trajectory on both $K_1$ and $K_2$. 
\end{enumerate} 

(i)$\implies$(ii): There exists $\gamma \in \mca{P}(K_1+K_2)$ such that $\len(\gamma) = \mu_P(K_1+K_2)$. 
If $a_1 \gamma \notin \mca{P}^+(K_1)$, one has $(a_1+\ep) \gamma \notin \mca{P}^+(K_1)$ for sufficiently small $\ep>0$. 
On the other hand, $(a_2-\ep) \gamma \notin \mca{P}^+(K_2)$ since 
$(a_2-\ep) \len(\gamma) < \mu_P(K_2)$. 
Thus $\gamma \notin \mca{P}^+(K_1+K_2)$, this is a contradiction. 
Therefore $a_1 \gamma \in \mca{P}^+(K_1)$. 
Since $\len(a_1 \gamma)= \mu_P(K_1)$, 
Lemma \ref{lem:convex-5} implies 
$a_1 \gamma \in \mca{P}(K_1)$ up to parallel displacement. 
We can prove $a_2 \gamma \in \mca{P}(K_2)$ in the same way, thus (ii) holds. 

(ii)$\implies$(i): 
Take $\gamma:S^1 \to \R^n$ as in (ii). 
For $j=1, 2$, let $\gamma_j$ be a shortest periodic billiard trajectory on $K_j$, which is obtained by 
parallel displacement and scaling of $\gamma$. 
We may assume that $\gamma = \gamma_1 + \gamma_2$. 
Then $\len(\gamma)= \len(\gamma_1) + \len(\gamma_2)= \mu_P(K_1) + \mu_P(K_2)$. 

It is easy to see that $\mca{B}_{\gamma_1} = \mca{B}_{\gamma_2}$. Let us denote it as $\mca{B}$. 
For each $t \in \mca{B}$, $\nu(t):= \dot{\gamma}^-(t)-\dot{\gamma}^+(t)/|\dot{\gamma}^-(t)-\dot{\gamma}^+(t)|$ 
is outer normal to $\partial K_j$ at $\gamma_j(t)$ for $j=1, 2$. 
Let us set $\mca{N}:= \{ \nu(t) \mid t \in \mca{B} \}$. 
By Lemma \ref{lem:convex-2}, we have 
$(0,\ldots,0) \in \conv(\mca{N})$ and $h(K_j:\nu)= h(\gamma_j(S^1):\nu)$ for any $\nu \in \mca{N}$, $j=1,2$.
Then, for any $\nu \in \mca{N}$ 
\[
h(K_1+K_2: \nu ) = h(K_1: \nu) + h(K_2:\nu) = h(\gamma_1(S^1):\nu) + h(\gamma_2(S^1):\nu) = h(\gamma(S^1):\nu).
\]
By Lemma \ref{lem:convex-1}, $\gamma \in \mca{P}^+(K_1+K_2)$. 
Hence 
\[
\mu_P(K_1+K_2) = \mu_P^+(K_1+K_2)  \le \len(\gamma) = \mu_P(K_1)+ \mu_P(K_2).
\]
Combined with (\ref{eq:BruMin}), (i) is proved. 
\end{proof} 

To prove Theorem \ref{thm:Ghomi}, we need the following lemma.

\begin{lem}\label{lem:Ghomi}
Let $B$ be a ball in $\R^n$ with radius $r>0$. 
Then, any $\gamma \in \mca{P}(B)$ satisfies $\len(\gamma) \ge 4r$, and equality holds if and only if $\gamma$ is a bouncing ball orbit. 
In particular, $\mu_P(B)=4r$.
\end{lem}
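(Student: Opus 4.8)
The plan is to exploit the rigidity of billiard dynamics inside a round ball: I will show that, after reducing to dimension two, every $\gamma\in\mca{P}(B)$ is an equilateral star polygon inscribed in the boundary circle, express $\len(\gamma)$ in terms of two integers, and minimise.

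Normalise so that $B$ is centred at the origin $O$. Since geodesics of $\R^n$ are straight lines, no $\gamma\in\mca{P}(B)$ can have $\mca{B}_\gamma=\emptyset$; so, writing the bounce times cyclically as $t_1<\cdots<t_m$ and $q_i:=\gamma(t_i)\in\partial B$, the trajectory is the concatenation of the chords $[q_i,q_{i+1}]$, and because $\ddot\gamma\equiv0$ on each arc while $\gamma$ is nonconstant with constant speed off $\mca{B}_\gamma$ one gets $q_i\ne q_{i+1}$ and $m\ge2$. Let $u_i$ be the unit direction of $[q_{i-1},q_i]$; the law of reflection at $q_i$ is precisely the condition that $u_{i+1}-u_i$ be a nonzero multiple of $q_i$. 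Feeding this into $q_{i+1}=q_i+|q_{i+1}-q_i|\,u_{i+1}$ shows inductively that all $q_i$ and $u_i$ lie in $W:=\operatorname{span}(q_1,u_1)$, so $\gamma(S^1)\subset W\cap B$, a disc of radius $r$ (or a diameter of $B$ if $\dim W\le1$), and one may assume $n=2$.

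For a disc this is classical: an elementary computation with $|q_i|=r$, $|u_i|=1$ and the reflection identity gives that $u_i\cdot q_i$ is a positive constant $c$ independent of $i$, hence that $\gamma$ meets $\partial B$ always at the same angle and that all chords have the common length $\ell=2c$; equivalently each triangle $Oq_iq_{i+1}$ is isosceles with apex angle $\alpha:=\pi-2\arccos(c/r)$ independent of $i$, and the turning sense is constant (a single reversal would force $q_{i+1}=q_{i-1}$, i.e.\ $\gamma$ a diameter traversed back and forth). Closure gives $m\alpha\in2\pi\Z$, so after possibly reversing orientation $\alpha=2\pi k/m$ with an integer $1\le k\le m/2$, and $\ell=2r\sin(\alpha/2)=2r\sin(\pi k/m)$. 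Therefore
\[
\len(\gamma)=m\ell=2mr\sin\bigl(\pi k/m\bigr)=2\pi kr\cdot\frac{\sin t}{t},\qquad t:=\pi k/m\in(0,\pi/2].
\]
Since $(\sin t)/t$ is decreasing on $(0,\pi)$ it is $\ge2/\pi$ on $(0,\pi/2]$, whence $\len(\gamma)\ge4kr\ge4r$. Equality forces $k=1$ and $t=\pi/2$, i.e.\ $m=2$ with $q_1,q_2$ antipodal: then $\gamma$ runs back and forth along a diameter, which is exactly a bouncing ball orbit, and conversely such an orbit has length $4r$. Since a diameter traversed back and forth is a genuine element of $\mca{P}(B)$ of length $4r$, it follows that $\mu_P(B)=4r$.

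The hard part will be the rigidity step, i.e.\ squeezing out of the one reflection identity the planar reduction, the constancy of $u_i\cdot q_i$ (and hence of the chord length and the central angle), and the constancy of the turning sense; everything after that is routine trigonometry. Some care is needed in the degenerate case $c=r$, where $\arccos(c/r)=0$ and the "constant turning sense" statement is vacuous: there $q_{i+1}=-q_i$ and $\gamma$ is a (possibly multiply covered) bouncing ball orbit, consistent with the displayed formula at $k=m/2$.
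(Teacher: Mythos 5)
Your proof is correct and takes essentially the same route as the paper, whose entire argument is to record the star-polygon formula $\len(\gamma)=2kr\sin(\pi j/k)$ (with $k=\sharp\mca{B}_\gamma$, $1\le j\le k-1$) and leave the rest to ``short computations''. You simply supply the details the paper omits --- the planar reduction, the constancy of the chord length and central angle, and a clean minimisation via the monotonicity of $(\sin t)/t$ --- and I see no gaps.
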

\begin{proof}
Let $k:= \sharp \mca{B}_\gamma$. Then, one has $\len(\gamma) = 2kr \sin(\pi j/k)$ for some $1 \le j \le k-1$.
Then, the lemma follows from short computations. 
\end{proof} 

\begin{proof}[\textbf{Proof of Theorem \ref{thm:Ghomi}}]
Let $K$ be a convex body, and $B$ be a largest ball contained in $K$. 
Since the radius of $B$ is $r(K)$, Corollary \ref{cor:convex-4.5} and Lemma \ref{lem:Ghomi} imply 
$\mu_P(K) \ge \mu_P(B) = 4r(K)$. 

Suppose that $\mu_P(K)=4r(K)$, and let $\gamma$ be a shortest periodic billiard trajectory on $K$. 
Then $\gamma \in \mca{P}(K) \subset \mca{P}^+(K) \subset \mca{P}^+(B)$, and 
$\len(\gamma) = 4r(K) = \mu_P(B)$. 
Then Lemma \ref{lem:convex-5} shows that $\gamma \in \mca{P}(B)$ up to parallel displacement. 
By Lemma \ref{lem:Ghomi}, $\gamma$ is a bouncing ball orbit. 
In particualr, $\gamma$ is orthogonal to $\partial K$ at bouncing points. 
Thus $K$ is contained in a slab of thickness $\len(\gamma)/2=2r(K)$. 
Hence $\wid (K) = 2r(K)$. 

Suppose that $\wid(K)= 2r(K)$. 
Then $K$ is contained in a slab $S$ of thickness $2r(K)$. 
Let $\gamma$ be a bouncing ball orbit on $S$, i.e. $\gamma$ is a shortest orbit which touches both connected components of $\partial S$. 
Then, it is easy to see that $\gamma \in \mca{P}^+(S) \subset \mca{P}^+(K)$. 
Thus $\mu_P(K) =\mu_P^+(K) \le \len(\gamma)=4 r(K)$. 
\end{proof}

\textbf{Acknowledgements.} 
The author would like to thank an anonymous referee for many useful comments. 
The author is supported by JSPS KAKENHI Grant No. 25800041.


\begin{thebibliography}{ZZZ}

\bibitem{AbM} A. Abbondandolo, P. Majer, \textit{Lectures on the Morse complex for infinite-dimensional manifolds},
Morse theoretic methods in nonlinear analysis and in symplectic topology, 1--74, Springer, 2006. 

\bibitem{AbS} A. Abbondandolo, M. Schwarz, \textit{A smooth pseudo-gradient for the Lagrangian action functional}, 
Adv. Nonlinear Stud. \textbf{9} (2009), no.4, 597--623.

\bibitem{ABKS}
A. Akopyan, A. Balitskiy, R. Karasev, A. Sharipova, 
\textit{Elementary results in non-reflexive Finsler billiards}, 
arxiv:1401.0442v2 

\bibitem{ArOs_old} S. Artstein-Avidan, Y. Ostrover, \textit{A Brunn-Minkowski inequality for symplectic capacities of convex domains}, 
Int. Math. Res. Not. (2008), no. 13, Art. ID rnn044, 31 pp. 

\bibitem{ArOs} S. Artstein-Avidan, Y. Ostrover, \textit{Bounds for Minkowski billiard trajectories in convex bodies}, 
Int. Math. Res. Not. (2012),  doi: 10.1093/imrn/rns216. 

\bibitem{Ben} V. Benci, \textit{Periodic solutions of Lagrangian systems on a compact manifold}, 
J. Differential Equations.  \textbf{63}  (1986),  no. 2, 135 -- 161. 

\bibitem{BG} V. Benci, F. Giannoni, \textit{Periodic bounce trajectories with a low number of bounce points},
Ann. Inst. H. Poincar\'{e} Anal. Non Lin\'{e}aire. \textbf{6}  (1989),  no. 1, 73 -- 93. 

\bibitem{BB}
D. Bezdek, K. Bezdek, 
\textit{Shortest billiard trajectories}, 
Geom. Dedicata. \textbf{141} (2009), 197--206. 

\bibitem{FHW} A. Floer, H. Hofer, K. Wysocki, \textit{Applications of symplectic homology I}, 
Math. Z. \textbf{217} (1994), no.4, 577--606.

\bibitem{Gh} M. Ghomi, \textit{Shortest periodic billiard trajectories in convex bodies}, 
Geom. Funct. Anal. \textbf{14} (2004), no.2, 295--302. 

\bibitem{HZ} H. Hofer, E. Zehnder, \textit{Symplectic invariants and Hamiltonian dynamics}, 
Birkh\"{a}user, Basel, 1994. 

\bibitem{Ir} K. Irie, \textit{Symplectic homology of disc cotangent bundles of domains in Euclidean space}, 
arxiv:1211.2184 v3. J. Symplectic Geom, accepted. 

\bibitem{Rot} R. Rotman, \textit{The length of a shortest geodesic loop at a point}, 
J. Differential Geom.  \textbf{78}  (2008),  no. 3, 497--519.

\bibitem{Schwarz} M. Schwarz, \textit{On the action spectrum for closed symplectically aspherical manifolds}, 
Pacific J. Math.  \textbf{193}  (2000),  no. 2, 419 -- 461. 

\bibitem{Weber} J. Weber, \textit{Perturbed closed geodesics are periodic orbits: Index and transversality}, 
Math. Z.  \textbf{241}  (2002),  no. 1, 45 -- 82. 

\end{thebibliography}
\end{document}